\crefname{hypothesis}{Hypothesis}{Hypotheses}
\title{An implicit--explicit second order  BDF numerical scheme with variable steps for gradient flows\thanks{
Submitted to the editors DATE.
\funding{The work of D. Hou is supported by NSFC grant 12001248, the NSF of the Jiangsu Higher Education Institutions of China grant
BK20201020, the NSF of Universities in Jiangsu Province of China grant 20KJB110013 and the Hong Kong Polytechnic University grant 1-W00D. Z. Qiao  has received support from the Hong Kong Research Council RFS grant RFS2021-5S03 and GRF grant 15302919, the Hong Kong Polytechnic University grant 4-ZZLS, and the CAS AMSS-PolyU Joint Laboratory of Applied Mathematics.}}}
\author{Dianming Hou\thanks{School of Mathematics and Statistics, Jiangsu Normal University, 221116 Xuzhou, China. Current address: Department of Applied Mathematics, The Hong Kong Polytechnic University, Hung Hom, Kowloon, Hong Kong.
  (\email{dmhou@stu.xmu.edu.cn}).}
\and Zhonghua Qiao\thanks{Corresponding author. Department of Applied Mathematics, The Hong Kong Polytechnic University, Hung Hom, Kowloon, Hong Kong.
  (\email{zqiao@ployu.edu.hk}).}
%\and Jane E. Smith\footnotemark[3]
}
\begin{document}
\graphicspath{{figures/},}
\maketitle

% REQUIRED
\begin{abstract}
  In this paper, we propose and analyze an efficient implicit--explicit (IMEX) second order in time backward differentiation formulation (BDF2) scheme with variable time steps for gradient flow problems using the scalar auxiliary variable (SAV) approach.
We prove the unconditional energy stability of the scheme for a modified discrete energy with the adjacent time step ratio $\gamma_{n+1}:=\Dt_{n+1}/\Dt_{n}\leq 4.8645$. The uniform $H^{2}$ bound for the numerical solution is derived under a mild regularity restriction on the initial condition, that is  $\phi(\x,0)\in H^{2}$. Based on this uniform bound, a rigorous error estimate of the numerical solution is carried out on the temporal nonuniform  mesh.
Finally, serval numerical tests are provided to validate the theoretical claims.  With the application of an adaptive time-stepping strategy, the efficiency of our proposed scheme can be clearly observed in the coarsening dynamics simulation.
\end{abstract}

% REQUIRED
\begin{keywords}
 gradient flow, variable time-stepping scheme, energy stability, convergence analysis
\end{keywords}

% REQUIRED
\begin{AMS}
  35K55, 65M12, 65M15, 65F30
\end{AMS}

\section{Introduction}
Many science and engineering problems can be modeled by partial differential equations (PDE) having the structure of gradient flows.
%These approaches have been widely used in modelling the processes of the interface dynamics \cite{And97,Chen98,Gur96}, %Yue04},
%thin films \cite{GO01,li03,OF98}, liquid crystals \cite{Doi86,Lar90,Lar91,Les79} and image processing \cite{BVM04,ST06,XP98}.
The evolution PDE systems are resulted from the energetic variational principle of the
total free energy in different Sobolev spaces.
Typical gradient flows include the Allen-Cahn and Cahn-Hilliard equations. %, which are physically attractive and thermodynamically-consistent models in the investigation of multi-phase flows.
The general form of gradient flow can be written as
\be\label{prob0}
\dps\frac{\partial \phi}{\partial t}=-\mbox{grad}_{H} E(\phi),
\ee
where $E[\phi(\x,t)]$ is the free energy functional associated to the physical problem,
 and $\mbox{grad}_{H} E(\phi)$ is the variational derivative of $E$ in the Sobolev space $H$.
Then the above PDE system satisfies the following energy dissipation law
 \bq\label{EDlaw}
 \frac{d}{dt}E(\phi)
 =\Big(\mbox{grad}_{H} E(\phi), \frac{\partial \phi}{\partial t}\Big)
 =-\|\mbox{grad}_{H} E(\phi)\|_{0}^{2},
 \eq
 where $(\cdot,\cdot)$ and $\|\cdot\|_0$ are the $L^{2}-$inner product and the associated norm, respectively.
This indicates the non-increasing in time nature of the energy functional $E$ for the gradient flow system. Moreover, in many cases, it is observed that the evolution of the free energy $E(\phi)$ usually involves both fast and slow stages of change in the long time simulation. Therefore, it is highly desired to develop high order unconditionally stable numerical schemes on the temporal nonuniform mesh, in which the adaptive time-stepping statigies can be easily applied without worrying about the instability. This is essentially important for the gradient flow problems with the large scale and long time simulation. For the single step time integration method, such as the first order BDF scheme and second order Crank-Nicolson scheme, it is relatively easy to construct the unconditionally energy stable numerical schemes for gradient flows. However, it is completely different and quite difficult to numerically investigate the multi-step methods, especially on the temporal nonuniform meshes; see e.g., \cite{BJ98,CWYZ19,LZ20,RML82,TV197,WS08} for related research of BDF$k$ methods with $2\leq k\leq 6$. It is noted that the BDF2 scheme usually achieves stronger stability than the second order Crank-Nicolson based schemes, particularly for the problems with strong stiffness \cite{DYL18,HAX19,HX21_3}.

Our goal in this paper is to construct and analyze the linear unconditionally energy stable and variable time-stepping BDF2 scheme for \eqref{prob0} with a mild restriction on the adjacent time step ratio.
In \cite{CWWW19,YCWW18}, the second order BDF numerical schemes have been developed for the Cahn-Hilliard equations with the double well and logarithmic nonlinear potentials, respectively. The energy stability and the convergence analysis were also derived on the temporal uniform mesh by using the Douglas-Dupont regularization technique. Meanwhile, it has attracted increasing attention and has been extensively studied in constructing robust and efficient BDF2 schemes for parabolic PDEs on the temporal nonuniform mesh, in which the adaptive time-stepping strategies can be easily applied to improve the efficiency of the numerical simulation. Becker \cite{BJ98} numerically investigated the variable time-stepping BDF2 scheme for linear parabolic PDEs, in which the rigorous stability and convergence analysis were derived with a restriction on the adjacent time step ratio $\gamma_{n+1}<(2+\sqrt{13})/3\approx1.8685.$ However, an exponential prefactor was involved in their numerical analysis, which could blow up for certain choice of time-step series at vanishing time-steps.
Chen et al. presented a nonuniform BDF2 scheme with convex splitting method for the Cahn-Hilliard equation in \cite{CWYZ19}.
A new developed Gr\"onwall inequality was employed to remove the blowing-up prefactor in their convergence analysis under the restriction on the adjacent time step ratio $\gamma_{n}\leq1.534$.
In \cite{LTZ20}, Liao, et al. investigated the BDF2 scheme for the Allen-Cahn equation on the temporal nonuniform mesh, in which the nonlinear potential term was treated implicitly. The unique solvability and energy stability of their proposed scheme were derived with the time step ratio $\gamma_{n}\leq 3.561$ and a mild restriction on the time step sizes. Moreover, if $\gamma_{n}\leq 1+\sqrt{2}$, the discrete maximum bound principle of the numerical scheme has been obtained with the kernel recombination technique.
With this uniform bound of the numerical solutions, they have presented a rigorous error analysis of the fully implicit scheme by using a novel discrete Gr\"onwall inequality.
Recently, the BDF2 scheme with variable
time steps has been investigated for molecular beam epitaxial (MBE) model without slope selection in \cite{TZZ21}. Under the constrain condition on the adjacent time step ratio $\gamma_{n}\leq 4.8465,$
it has been proved that the proposed numerical scheme is unconditionally energy stable and the numerical solution is of second-order convergence in time. Since the existing variable BDF2 schemes for gradient flows treat the nonlinear terms fully or partially implicitly, nonlinear iterations must be conducted at each time step.

As we know, the nonlinear terms involved in gradient flows usually yield a severe stability restriction on the time step sizes and the ratio of the adjacent time steps in the numerical schemes. Many efforts have been devoted to remove this restriction by carefully dealing with the nonlinear terms in the gradient flow problems. These includes, but not limited to, the convex splitting method  \cite{BLWW13,Ell93,Eyr98,GLWW14}, the linear stabilized method \cite{DJLQ_rev,JLQZ18,LQ17,LQT16,SY10,Xu06}, the invariant energy quadratization (IEQ) method \cite{Yan16,Zha17} and the scalar auxiliary variable (SAV) approach \cite{Shen17_1,Shen17_2}. Particularly, the SAV method achieves more convenience to design the efficient and unconditionally energy stable schemes for the gradient flow problems, where a modified energy is defined involving an scalar auxiliary variable. Very recently, Huang et al. \cite{HS21_1,HSY21} presented an implicit-explicit linear and unconditionally energy stable BDF$k$ $(1\leq k\leq5)$ scheme for gradient flows using the SAV method. However, its unconditional energy stability only involved the auxiliary variable without the information of the phase function. The convergence analysis was also derived, but only available on the temporal unform mesh \cite{HS21_1}.

In this paper, we propose and analyze an efficient implicit--explicit BDF2 scheme with variable time steps for gradient flow problems using the SAV approach.
We prove the unconditional energy stability of the scheme for a modified discrete energy with the adjacent time step ratio $\gamma_{n+1}:=\Dt_{n+1}/\Dt_{n}\leq 4.8645$.
The main contributions are as follows. First, we develop a linear unconditionally energy stable BDF2 scheme on the temporal nonuniform mesh for the first time for gradient flows under the mild restriction on the adjacent time step ration $\gamma_{n}\leq 4.8645$. Moreover, the stability of the numerical scheme indicates the uniform $H^{1}$ bound for the numerical solutions, which plays an important role in deriving the error estimates.
Secondly, the rigorous error analysis has been established with a proper regularity assumption on the solution.

The rest of this paper is organized as follows: In \cref{sec:sect2}, we construct the IMEX BDF2 scheme for gradient flows on the nonuniform grid mesh using the SAV approach, and
carry out the stability results for the proposed schemes. Based on the stability form of the numerical scheme, the uniform $H^{2}$ bound for the numerical solutions is investigated in \cref{sec:sect3}.
In \cref{sec:sect4}, a rigorous convergence analysis is derived for both $L^{2}$ and $H^{-1}$ gradient flows.
Several numerical examples are given in \cref{sec:sect5} to validate the theoretical prediction of the proposed method.
Finally, the paper ends with some concluding remarks.

%The paper is organized as follows. Our main results are in
%\cref{sec:main}, our new algorithm is in \cref{sec:alg}, experimental
%results are in \cref{sec:experiments}, and the conclusions follow in
%\cref{sec:conclusions}.

\section {Scalar auxiliary variable approach}\label{sec:sect2}

We first introduce some notations which are used throughout the paper. Let $H^{m}(\Omega)$ and $\|\cdot\|_{m},
m=0,\pm1,\cdots$, denote the standard Sobolev spaces and their norms, respectively. In particular, the norm and inner product of $L^{2}(\Omega):=H^{0}(\Omega)$ are denoted by $\|\cdot\|$ and $(\cdot,\cdot)$ respectively. Here, $\Omega\subset \mathbb{R}^{d}, d=1,2,3$ is a bounded domain with smooth boundary.

In what follows, we focus our attention on a typical energy functional $E(\phi)$, defined by
\bq
E(\phi)=\int_{\Omega}\Big(\frac{\varepsilon^{2}}{2}|\nabla\phi|^{2}+F(\phi)\Big)d\x,
\eq
where $F(\phi)$ represents the general nonlinear potential function.
 Taking the Sobolev space $H$ to be
$L^2(\Omega)$ and $H^{-1}(\Omega)$ in \eqref{prob0} respectively, the gradient flows read
\brr\label{prob}
\begin{cases}
\dps\frac{\partial \phi}{\partial t}
=G_H \mu,\\[9pt]
\dps\mu=-\varepsilon^{2}\Delta\phi+F'(\phi),
\end{cases}
\err
where
\bq\label{ef}
G_H:=
\begin{cases}
\begin{array}{l}
-I,\quad \mbox{ for $H:=L^{2}$},\\[9pt]
\Delta,\quad \mbox{ for $H:=H^{-1}$.}
\end{array}
\end{cases}
 \eq
 For the sake of simplicity we consider the periodic boundary condition or homogeneous Neumann boundary condition for \eqref{prob}, i.e.,
\bq\label{N_B}
\dps\frac{\partial\phi}{\partial \n}\big|_{\partial\Omega}=0 \mbox{ for } G_H:=-I
    \mbox{ and }\frac{\partial\phi}{\partial \n}\big|_{\partial\Omega}=\frac{\partial\Delta\phi}{\partial \n}\big|_{\partial\Omega}=0 \mbox{ for } G_H:=\Delta,
\eq
where $\n$ is the unit outward normal vector on the boundary.

The key idea of the SAV approach \cite{Shen17_1} is to introduce an scalar auxiliary function, defined by
\bq\label{SAV}
r(t)=\sqrt{E_{1}(\phi)}:=\sqrt{\int_{\Omega}g(\phi)d\x+C_{0}},
\eq
where $g(\phi)=F(\phi)-\frac{\lambda}{2}|\phi|^{2}$ with $\lambda\geq0$, and $C_{0}$ is a positive constant such that $E_{1}(\phi)>0.$
Then, the original system \eqref{prob} can be equivalently rewritten as:
 \brr\label{re_prob}
 \begin{cases}
 &\dps\frac{\partial \phi}{\partial t}=G_H\mu,\\[9pt]
 &\dps\mu=-\varepsilon^{2}\Delta\phi+\lambda\phi+\frac{r(t)}{\sqrt{E_{1}(\phi)}}V(\xi)g'(\phi),\\[15pt]
 &\dps\frac{d r}{d t}=\frac{V(\xi)}{2\sqrt{E_{1}(\phi)}}\int_{\Omega}g'(\phi)
\frac{\partial \phi}{\partial t}d\x,\ \ \ \xi=\frac{r(t)}{\sqrt{E_{1}(\phi)}},
 \end{cases}
 \err
 where $V(\xi)$ is a real function of $\xi$ such that $V(1)\equiv1$ and $V(\xi)\in C^{2}(\mathbb{R}).$
 Taking $L^2-$inner products of the above equations with $\mu, \frac{\partial \phi}{\partial t}$,
 and $2r(t)$ respectively gives
 \bq\label{mod_energ}
 \dps\frac{d(\frac{\varepsilon^{2}}{2}\|\nabla\phi\|^{2}+\frac{\lambda}{2}\|\phi\|^{2}+r^{2})}{dt}=(G_H\mu,\mu)\leq0.
 \eq
 Noticing that
 $$\dps\frac{d(\frac{\varepsilon^{2}}{2}\|\nabla\phi\|^{2}+\frac{\lambda}{2}\|\phi\|^{2}+r^{2})}{dt}=\frac{d(E(\phi)+C_{0})}{dt}=\frac{d}{dt}E(\phi),
 $$
 we obtain the same energy dissipation law as \eqref{EDlaw} at the continuous level. However, we will see the constructed numerical scheme in this paper preserving a modified free energy dissipation more like the form of \eqref{mod_energ}.
{\color{black}
\begin{remark}
For the special case $V(\xi)\equiv1$, the new proposed SAV approach \eqref{re_prob} comes to be the conventional SAV method reported in \cite{Shen17_1}.
%and it will supply
%more positive terms such that $E_{1}+C_{0}>0$, where $0\leq\alpha\leq\varepsilon.$
\end{remark}
}
Starting with \eqref{re_prob}, we are now ready to construct various time stepping
 schemes to approximate the solution $\phi$ with variable time steps.  Let $\Dt_{n}:=t_{n}-t_{n-1}, n=0,1,\dots, N$ be the time step size, $\gamma^{n+1}=\Dt_{n+1}/\Dt_{n}$ be the adjacent time--step ratios,
 and $\tau=\max\{\Dt_{n},n=1,2,\cdots\}$ be the maximum time step size of the temporal mesh.

\subsection{A first order scheme}

 A combination of some first order discretizations to each term of \eqref{re_prob} gives
 the following scheme:
 \begin{subequations}\label{eq1}
 \begin{align}
 &\dps{}\frac{\phi^{n+1}-\phi^{n}}{\Dt_{n+1}}=G_H\mu^{n+1},\label{eq1_1}\\
 &\dps\mu^{n+1}=-\varepsilon^{2}\Delta\phi^{n+1}+\lambda\phi^{n+1}+\frac{r^{n+1}}{\sqrt{E_{1}^{n}}}V(\xi^{n+1})g'(\phi^{n}),\label{eq1_2}\\
 &\dps\frac{r^{n+1}-r^{n}}{\Dt_{n+1}}=\frac{V(\xi^{n+1})}{2\sqrt{E_{1}^{n}}}\int_{\Omega}g'(\phi^{n})
\frac{\phi^{n+1}-\phi^{n}}{\Dt_{n+1}}d\x,\ \ \ \xi^{n+1}=\frac{r^{n+1}}{\sqrt{E_{1}^{n}}},\label{eq1_3}
 \end{align}
 \end{subequations}
where $\phi^{n}$ is an approximation to $\phi(t^n)$ and $E^{n}_{1}:=E_{1}(\phi^{n})$.

Similar to the stability analysis of the traditional SAV method \cite{Shen17_1}, it is easy to check that the scheme \eqref{eq1} is unconditionally energy stable in the sense that the following discrete energy law holds
\brr\label{eq2}
&\dps\Big[\frac{\varepsilon^{2}}{2}\|\nabla\phi^{n+1}\|^{2}+\frac{\lambda}{2}\|\phi^{n+1}\|^{2}+|r^{n+1}|^{2}\Big]-\Big[\frac{\varepsilon^{2}}{2}\|\nabla\phi^{n}\|^{2}+\frac{\lambda}{2}\|\phi^{n}\|^{2}+|r^{n}|^{2}\Big]\\[7pt]
\leq&\dps(G_H\mu^{n+1},\mu^{n+1})\Dt_{n+1}\leq0.
\err

\subsection{An IMEX second-order BDF scheme}
We first recall the quadratic interpolation operator $\Pi_{2,j}\phi(t)$: for $\phi(t), t\in[t_{j-1},t_{j+1}]$,  $\Pi_{2,j}\phi(t)$ interpolates
$\phi(t)$ at three points
$(t_{j-1},\phi(t_{j-1})), (t_{j},\phi(t_{j}))$ and $(t_{j+1},\phi(t_{j+1}))$. That is, for $j=1,2\cdots,N-1$,
\beq
\Pi_{2,j}\phi=\phi(t_{j-1})l_{j-1}(t)+\phi(t_{j})l_{j}(t)+\phi(t_{j+1})l_{j+1}(t),
\eeq
where $l_{j}(t)$ is the basis of Lagrange interpolation polynomial, defined by
$$\dps l_{k}(t):=\prod_{i=j-1,\atop i\neq k}^{j+1}\frac{t-t_{i}}{t_{k}-t_{i}}$$
for $k=j-1, j$, and $j+1$.
Moreover, we have
\bry
\partial_{t}(\Pi_{2,j}\phi(t))\Big|_{t=t_{j+\sigma}}=&\dps\frac{1}{\Dt_{j+1}}\Big[\frac{1+2\sigma\gamma_{j+1}}{1+\gamma_{j+1}}\phi(t_{j+1})-(1+(2\sigma-1)\gamma_{j+1})\phi(t_{j})\\[8pt]
&\dps+\frac{(2\sigma-1)\gamma^{2}_{j+1}}{1+\gamma_{j+1}}\phi(t_{j-1})\Big]
\ery
for $t_{j+\sigma}:=t_{n}+\sigma\Dt_{n+1}$ with $\frac{1}{2}\leq\sigma\leq1.$
Then, the second order finite differentiation formula $F^{j+\sigma}_{2}\phi$ to approximate $\phi'(t)$ at $t=t_{j+\sigma}$ reads
\bq\label{BDF_s}
\dps F^{j+\sigma}_{2}\phi:=\frac{1}{\Dt_{j+1}}\Big[\frac{1+2\sigma\gamma_{j+1}}{1+\gamma_{j+1}}\phi^{j+1}-(1+(2\sigma-1)\gamma_{j+1})\phi^{j}+\frac{(2\sigma-1)\gamma^{2}_{j+1}}{1+\gamma_{j+1}}\phi^{j-1}\Big],
\eq
where $\phi^{j}$ is an approximation to $\phi(t_{j}).$
Note that when $\sigma=1$, it becomes the classical second-order BDF with variable time steps:
\be\label{BDF2b}
F^{j+1}_{2}\phi=\frac{1}{\Dt_{j+1}}\Big[\frac{1+2\gamma_{j+1}}{1+\gamma_{j+1}}\phi^{j+1}-(1+\gamma_{j+1})\phi^{j}+\frac{\gamma^{2}_{j+1}}{1+\gamma_{j+1}}\phi^{j-1}\Big],
\ee
 and Crank-Nicolson formula
\bex
F^{j+1/2}_{2}\phi=\frac{\phi^{j+1}-\phi^{j}}{\Dt_{j+1}}
\eex
for $\sigma=\frac{1}{2}$.

Now, we are ready to construct an IMEX second-order BDF scheme with variable time steps, called VBDF2 hereafter, for \eqref{re_prob}:
 \begin{subequations}\label{BDF_2}
 \begin{align}
 &\dps F^{n+\sigma}_{2}\phi=G_H\mu^{n+\sigma},\label{BDF_2_1}\\
 &\dps\mu^{n+\sigma}=-\varepsilon^{2}\Delta\phi^{n+\sigma}+\lambda\phi^{n+\sigma}+\frac{r^{n+1}}{\sqrt{E_{1}^{n}}}V(\xi^{n+1})g'(\phi^{*,n+\sigma}),\label{BDF_2_2}\\
 &\dps\frac{r^{n+1}-r^{n}}{\Dt_{n+1}}=\frac{V(\xi^{n+1})}{2\sqrt{E_{1}^{n}}}\int_{\Omega}g'(\phi^{*,n+\sigma})
\frac{\phi^{n+1}-\phi^{n}}{\Dt_{n+1}}d\x, \ \  \xi^{n+1}=\frac{r^{n+1}}{\sqrt{E_{1}^{n}}},\label{BDF_2_3}
 \end{align}
 \end{subequations}
 where $\phi^{n+\sigma}:=\sigma\phi^{n+1}+(1-\sigma)\phi^{n}$ and $\phi^{*,n+\sigma}:=\phi^{n}+\sigma\gamma_{n+1}(\phi^{n}-\phi^{n-1})$. Here we set $V(\xi)$ to be a functional of $\xi$ such that
 \bq\label{chv}
 \dps V(1)=1, \ \ \lim_{\xi\rightarrow1}\frac{V(\xi)-1}{1-\xi}=1,
 \eq
 that is,  $V'(1)=-1.$ From \eqref{BDF_2_3}, it is obvious that the numerical approximations to $R(t)$ and $\xi(t)$ are only first order accurate in time, viz.,
 \bq\label{ap_xi}
 R^{n+1}=R(t_{n+1})+C_{1}\Dt_{n+1},\ \ \xi^{n+1}=\xi(t_{n+1})+C_{2}\Dt_{n+1}=1+C_{2}\Dt_{n+1}.
 \eq
 It seems that the proposed VBDF-2 scheme \eqref{BDF_2} for the evaluation of $\phi$ is only of first order.
In fact, in the SAV approach \eqref{re_prob},
the auxiliary variable functional $R(t)$ and $\xi(t)$ affect the phase function $\phi$ of the original problem \eqref{prob} only by the term $\frac{R(t)}{\sqrt{E_{1}(\phi)}+C_{0}}V(\xi)$ in the discrete level. Moreover, combining with \eqref{chv} and \eqref{ap_xi}, it is easy to check that the discrete term $\frac{r^{n+1}}{\sqrt{E_{1}^{n}}}V(\xi^{n+1})=\xi^{n+1}V(\xi^{n+1})$ is of second order approximation to $1$. Therefore, the VBDF2 scheme is actually a second-order numerical method of \eqref{re_prob}. This will be rigorously verified by the theoretical analysis to be presented in Section 4.
Before rigorously proving the energy stability of VBDF2 scheme \eqref{BDF_2}, we
need the following inequality, seeing also Lemma 3.1 in \cite{HX21_1} with the parameter $\alpha=1$,
\brr\label{ideq1}
\dps (\phi^{n+1}-\phi^{n}) \ F^{n+\sigma}_{2}\phi\geq\dps G_{\sigma}^{n+1}
-G_{\sigma}^{n}+G(\gamma_{n+1},\gamma_{n+2})\frac{|\phi^{n+1}-\phi^{n}|^{2}}{2\Dt_{n+1}},
\err
where $G_{\sigma}^{n}:=\frac{(2\sigma-1)\gamma^{3/2}_{n+1}}{2(1+\gamma_{n})}\frac{|\phi^{n}-\phi^{n-1}|^{2}}{\Dt_{n}}$ and $G(s,z):=\frac{2+4\sigma s-(2\sigma-1)s^{\frac{3}{2}}}{1+s}-\frac{(2\sigma-1)z^{\frac{3}{2}}}{1+z}.$
$G(s,z)$ is increasing in $(0,s_{*}(\sigma))$ and decreasing in $(s_{*}(\sigma),+\infty)$ with respect to $s$, and decreasing for the variable $z\in(0,+\infty)$.
Moreover, there exists $0<z_{*}<4$ such that $G(z,z)$ is increasing in $(0,z_{*})$ and decreasing in $(z_{*},+\infty)$, since $G(0,0)=2\geq G(4,4)=2-\frac{8(2\sigma-1)}{5}\geq\frac{2}{5}.$
Let $\gamma_{**}(\sigma)$ be the positive root of $G(z,z)=0$ and $4\leq\gamma_{*}(\sigma)<\gamma_{**}(\sigma)$.
Thus, it follows from $G(0,z)=G(4,z)$ that
\bry
G(s,z)&\dps\geq \min\{G(0,\gamma_{*}(\sigma)),G(\gamma_{*}(\sigma),\gamma_{*}(\sigma))\}\geq G(\gamma_{*}(\sigma),\gamma_{*}(\sigma))\\
&\dps>G(\gamma_{**}(\sigma),\gamma_{**}(\sigma))=0
\ery
for any $0<s,z\leq\gamma_{*}(\sigma)<\gamma_{**}(\sigma)$.
 It is easy to check that the root function $\gamma_{**}(\sigma)$ is decreasing for $\sigma\in[\frac{1}{2},1]$ with $\gamma_{**}(1)\approx4.8645$ and $\gamma_{**}(\sigma)\rightarrow+\infty,$ as $\sigma\rightarrow\frac{1}{2}.$ Thus $\gamma_{**}(\sigma)\geq 4.8645$ for any $\sigma\in[\frac{1}{2},1].$

The stability property of the VBDF-2 scheme \eqref{BDF_2} is stated in the following theorem.
\begin{theorem}\label{st_the}
For $1/2\leq\sigma\leq1$ and $0<\gamma_{n+1}\leq\gamma_{**}(\sigma), n=1,\cdots,M-1$,
 the VBDF-2 scheme \eqref{BDF_2} is unconditionally energy stable in the sense that
\bq\label{eq2_1}
\dps E^{n+1}_{H}-E^{n}_{H}\leq0,
\eq
where the discrete modified energy $E^{n}_{H}$ is defined by:
\beq%\label{modif_E}
\dps E^{n}_{H}:=
\begin{cases}
\begin{array}{l}
\dps\frac{(2\sigma-1)\gamma^{\frac{3}{2}}_{n+2}}{2+2\gamma_{n+2}}\frac{\|\phi^{n}-\phi^{n-1}\|^{2}}{\tau_{n}}+
E^{n}_{Mod}, \mbox{ for $H:=L^{2}$,}\\
\dps\frac{(2\sigma-1)\gamma^{\frac{3}{2}}_{n+1}}{2+2\gamma_{n+1}}\frac{\|\nabla^{-1}(\phi^{n}-\phi^{n-1})\|^{2}}{\tau_{n}}+
E^{n}_{Mod}, \mbox{ for $H:=H^{-1}$,}
\end{array}
\end{cases}
\eeq
with $E^{n}_{Mod}=\frac{\varepsilon^{2}}{2}\|\nabla\phi^{n}\|^{2}+\frac{\lambda}{2}\|\phi^{n}\|^{2}+|r^{n}|^{2}$.
\end{theorem}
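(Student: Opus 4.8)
The plan is to obtain the one--step energy estimate by testing the chemical--potential relation \eqref{BDF_2_2} against the increment $\phi^{n+1}-\phi^{n}$, treating the SAV part via \eqref{BDF_2_3} and the leading linear part via the elementary identity
\[
(w^{n+\sigma},w^{n+1}-w^{n})=\tfrac12\big(\|w^{n+1}\|^{2}-\|w^{n}\|^{2}\big)+\big(\sigma-\tfrac12\big)\|w^{n+1}-w^{n}\|^{2},
\]
which holds for $w=\phi$ and for $w=\nabla\phi$ because $w^{n+\sigma}=\tfrac12(w^{n+1}+w^{n})+(\sigma-\tfrac12)(w^{n+1}-w^{n})$; the key point is that $\sigma-\tfrac12\ge0$, so the remainder terms are non--negative. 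Concretely, taking the $L^{2}$ inner product of \eqref{BDF_2_2} with $\phi^{n+1}-\phi^{n}$, the $-\varepsilon^{2}\Delta\phi^{n+\sigma}$ and $\lambda\phi^{n+\sigma}$ terms produce $\tfrac{\varepsilon^{2}}{2}(\|\nabla\phi^{n+1}\|^{2}-\|\nabla\phi^{n}\|^{2})+\tfrac{\lambda}{2}(\|\phi^{n+1}\|^{2}-\|\phi^{n}\|^{2})$ plus the non--negative remainder $(\sigma-\tfrac12)(\varepsilon^{2}\|\nabla(\phi^{n+1}-\phi^{n})\|^{2}+\lambda\|\phi^{n+1}-\phi^{n}\|^{2})$ (here $\lambda\ge0$), while multiplying \eqref{BDF_2_3} by $2r^{n+1}$ identifies the nonlinear term $\tfrac{r^{n+1}}{\sqrt{E_{1}^{n}}}V(\xi^{n+1})(g'(\phi^{*,n+\sigma}),\phi^{n+1}-\phi^{n})$ with $2r^{n+1}(r^{n+1}-r^{n})=|r^{n+1}|^{2}-|r^{n}|^{2}+|r^{n+1}-r^{n}|^{2}$. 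Collecting these gives $(\mu^{n+\sigma},\phi^{n+1}-\phi^{n})=E^{n+1}_{Mod}-E^{n}_{Mod}+(\sigma-\tfrac12)(\varepsilon^{2}\|\nabla(\phi^{n+1}-\phi^{n})\|^{2}+\lambda\|\phi^{n+1}-\phi^{n}\|^{2})+|r^{n+1}-r^{n}|^{2}$.

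It then remains to evaluate the left--hand side from \eqref{BDF_2_1} and invoke \eqref{ideq1}. For $H=L^{2}$ ($G_{H}=-I$) one simply has $(\mu^{n+\sigma},\phi^{n+1}-\phi^{n})=-(F^{n+\sigma}_{2}\phi,\phi^{n+1}-\phi^{n})$. For $H=H^{-1}$ ($G_{H}=\Delta$) I first note that the three coefficients of $F^{n+\sigma}_{2}$ sum to zero, so all iterates share a common mean and $\phi^{n+1}-\phi^{n}$ is mean--zero; hence $\mu^{n+\sigma}=\Delta^{-1}F^{n+\sigma}_{2}\phi$ up to an immaterial constant, and setting $\chi^{n}:=(-\Delta)^{-1/2}\phi^{n}$ and using self--adjointness, $(\mu^{n+\sigma},\phi^{n+1}-\phi^{n})=-(F^{n+\sigma}_{2}\chi,\chi^{n+1}-\chi^{n})$ with $\|\chi^{n+1}-\chi^{n}\|^{2}=\|\nabla^{-1}(\phi^{n+1}-\phi^{n})\|^{2}$. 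In both cases the left--hand side equals $-(F^{n+\sigma}_{2}w,w^{n+1}-w^{n})$ for the appropriate $w$ ($\phi$, resp.\ $\chi$), and applying \eqref{ideq1} pointwise and integrating over $\Omega$ yields $(F^{n+\sigma}_{2}w,w^{n+1}-w^{n})\ge G^{n+1}_{\sigma}-G^{n}_{\sigma}+G(\gamma_{n+1},\gamma_{n+2})\tfrac{\|w^{n+1}-w^{n}\|^{2}}{2\Dt_{n+1}}$, where $G^{n}_{\sigma}$ is the quantity in \eqref{ideq1} (with $\|\cdot\|$ in place of $|\cdot|$, and with $\nabla^{-1}(\phi^{n}-\phi^{n-1})$ in the $H^{-1}$ case). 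Substituting this into the identity above, moving $G^{n+1}_{\sigma}-G^{n}_{\sigma}$ to the left and writing $E^{n}_{H}:=E^{n}_{Mod}+G^{n}_{\sigma}$, one obtains
\[
E^{n+1}_{H}-E^{n}_{H}\le-G(\gamma_{n+1},\gamma_{n+2})\frac{\|w^{n+1}-w^{n}\|^{2}}{2\Dt_{n+1}}-\big(\sigma-\tfrac12\big)\big(\varepsilon^{2}\|\nabla(\phi^{n+1}-\phi^{n})\|^{2}+\lambda\|\phi^{n+1}-\phi^{n}\|^{2}\big)-|r^{n+1}-r^{n}|^{2}\le0,
\]
because $0<\gamma_{n+1},\gamma_{n+2}\le\gamma_{**}(\sigma)$ forces $G(\gamma_{n+1},\gamma_{n+2})>G(\gamma_{**}(\sigma),\gamma_{**}(\sigma))=0$ by the monotonicity and root analysis of $G(s,z)$ carried out just before the theorem, while $\sigma\ge\tfrac12$ and $\lambda\ge0$ make the remaining terms non--positive. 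This is precisely \eqref{eq2_1}, after matching $E^{n}_{H}=E^{n}_{Mod}+G^{n}_{\sigma}$ with the stated form of the discrete modified energy.

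I do not expect a genuine obstacle here: once \eqref{ideq1} and the positivity of $G$ on $(0,\gamma_{**}(\sigma)]^{2}$ are granted, the argument is essentially bookkeeping. The two points that require care are (i) choosing $\phi^{n+1}-\phi^{n}$, rather than $\mu^{n+\sigma}$, as the test function so that \eqref{ideq1} becomes applicable, which in the $H^{-1}$ case forces the conjugation by $(-\Delta)^{-1/2}$ and the use of mean conservation; and (ii) keeping track of the signs of the $(\sigma-\tfrac12)$--remainders and of $|r^{n+1}-r^{n}|^{2}$, all of which must land on the favorable side — this is exactly where $\sigma\in[\tfrac12,1]$ and $\lambda\ge0$ enter.
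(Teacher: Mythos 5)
Your proof is correct and follows essentially the same route as the paper's: testing \eqref{BDF_2_1}, \eqref{BDF_2_2}, \eqref{BDF_2_3} with $-G_H^{-1}(\phi^{n+1}-\phi^{n})$, $\phi^{n+1}-\phi^{n}$, and $2r^{n+1}$, then invoking \eqref{ideq1} together with the identities \eqref{Id} and discarding the non-negative remainders. If anything, you are more explicit than the paper on two points it leaves implicit --- the mean-conservation argument that makes $G_H^{-1}$ (equivalently $(-\Delta)^{-1/2}$) well defined in the $H^{-1}$ case, and the correct form of the first identity in \eqref{Id}, whose right-hand side should read $(2\sigma-1)|a^{k+1}-a^{k}|^{2}$.
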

\begin{proof}
We deduce from taking the inner products of \eqref{BDF_2_1}, \eqref{BDF_2_2}, and \eqref{BDF_2_3} with $-G_{H}^{-1}(\phi^{n+1}-\phi^{n}),$ $\phi^{n+1}-\phi^{n}$, and $2r^{n+1}$, respectively
\bry
\dps\Big(F^{n+\sigma}_{2}\phi,-G^{-1}_{H}(\phi^{n+1}-\phi^{n})\Big)=&\dps-(\mu^{n+1},\phi^{n+1}-\phi^{n}),\\[8pt]
\dps\Big(\mu^{n+1},\phi^{n+1}-\phi^{n}\Big)
 = &\dps\varepsilon^{2}\big(\nabla\phi^{n+\sigma},\nabla(\phi^{n+1}-\phi^{n})\big)+\lambda\big(\phi^{n+\sigma},\phi^{n+1}-\phi^{n}\big)\\[8pt]
 &\dps+\frac{r^{n+1}V(\xi^{n+1})}{\sqrt{E_{1}^{n}}}\Big(g'(\phi^{*,n+1}),\phi^{n+1}-\phi^{n}\Big),\\[17pt]
 \dps\frac{2r^{n+1}(r^{n+1}-r^{n})}{\Dt_{n+1}}=&\dps\frac{r^{n+1}V(\xi^{n+1})}{\sqrt{E_{1}^{n}}}\int_{\Omega}g'(\phi^{*,n+1})
\frac{\phi^{n+1}-\phi^{n}}{\Dt_{n+1}}d\x.
\ery
It follows from the above equations that
\brr\label{equat2}
&\dps\Big(F^{n+\sigma}_{2}\phi,-G^{-1}_{H}(\phi^{n+1}-\phi^{n})\Big)+\varepsilon^{2}\big(\nabla\phi^{n+\sigma},\nabla(\phi^{n+1}-\phi^{n})\big)\\[8pt]
&\dps+\lambda\big(\phi^{n+\sigma},\phi^{n+1}-\phi^{n}\big)+2r^{n+1}(r^{n+1}-r^{n})=0.
\err
Furthermore, applying the inequality \eqref{ideq1} and the identities
 \brr\label{Id}
2(\sigma a^{k+1}+(1-\sigma)a^{k})(a^{k+1}-a^{k})
=&|a^{k+1}|^{2}-|a^{k}|^{2}+(2\sigma-1)|a^{k}-a^{k-1}|^{2},\\[9pt]
2a^{k+1}(a^{k+1}-a^{k})
=&|a^{k+1}|^{2}-|a^{k}|^{2}+|a^{k+1}-a^{k}|^{2}
\err
to \eqref{equat2} and dropping some uninfluential terms,
the modified energy law \eqref{eq2_1} can be easy derived.
\end{proof}

%Besides the energy stability of the new proposed SAV scheme \eqref{BDF_2} based on the second order BDF approach  with variable time steps, it can also be efficiently implemented.
Next, we will show how to efficiently solve the VBDF2 scheme \eqref{BDF_2}. It follows from \eqref{BDF_s} and \eqref{BDF_2} that
\bry
&\dps\Big(\frac{1+2\sigma\gamma_{n+1}}{\Dt_{n+1}(1+\gamma_{n+1})}-\sigma G_H(-\varepsilon^{2}\Delta+\lambda)\Big)\phi^{n+1}-\xi^{n+1}V(\xi^{n+1})G_H g'(\phi^{*,n+1})\\
=&\dps\frac{1}{\Dt_{n+1}}\Big((1+(2\sigma-1)\gamma_{n+1})\phi^{n}-\frac{(2\sigma-1)\gamma_{n+1}^{2}}{1+\gamma_{n+1}}\phi^{n-1}\Big)
+(1-\sigma)\varepsilon^{2}G_H\Delta\phi^{n}.
\ery
We denote the right hand side of the above equation by $h(\phi^{n},\phi^{n-1}).$
Then it holds that
\bq\label{phi_epx}
\phi^{n+1}=\phi^{n+1}_{1}+\xi^{n+1}V(\xi^{n+1})\phi^{n+1}_{2}
\eq
with $\phi^{n+1}_{1}$ and $\phi^{n+1}_{2}$ being solved respectively by
\begin{subequations}\label{semi_D}
 \begin{align}
 &\dps\Big(\frac{1+2\sigma\gamma_{n+1}}{\Dt_{n+1}(1+\gamma_{n+1})}-\sigma G_H(-\varepsilon^{2}\Delta+\lambda)\Big)\phi^{n+1}_{1}=h(\phi^{n},\phi^{n-1}),\label{phi_1}\\
 &\dps\Big(\frac{1+2\sigma\gamma_{n+1}}{\Dt_{n+1}(1+\gamma_{n+1})}-\sigma G_H(-\varepsilon^{2}\Delta+\lambda)\Big)\phi^{n+1}_{2}=G_{H} g'(\phi^{*,n+1}).\label{phi_2}
     \end{align}
 \end{subequations}
 Actually, it is easy to see $\phi^{n+1}_{1}+\phi^{n+1}_{2}$ is the numerical solution of $\phi(t_{n+1})$ with the traditional second order stabilized method for the original gradient flow \eqref{prob}. In order to numerically solve the auxiliary function $\xi^{n+1}$,
we substitute equation \eqref{phi_epx} into \eqref{BDF_2_3} to obtain
\brr\label{xi_sol}
&\dps\xi^{n+1}\sqrt{E_{1}^{n}}-R^{n}-\frac{V(\xi^{n+1})}{\sqrt{E_{1}^{n}}}\Big[\xi^{n+1}V(\xi^{n+1})(g'(\phi^{*,n+1}),\phi^{n+1}_{2})\\
&\hspace{6.5cm}\dps+(g'(\phi^{*,n+1}),\phi^{n+1}_{1}-\phi^{n})\Big]=0.
\err
Denoting the left side of the above equation by $W(\xi^{n+1})$, and combining with $V(1)=1$ and $V^{'}(1)=-1$, we have the following identities
\brr
W(1)&\dps=\sqrt{E_{1}^{n}}-R^{n}-\frac{\big(g'(\phi^{*,n+1}),\phi^{n+1}_{1}+\phi^{n+1}_{2}-\phi^{n}\big)}{\sqrt{E_{1}^{n}}}\sim O(\Dt_{n+1}),\\
W^{'}(1)&\dps=\sqrt{E_{1}^{n}}+\frac{\big(g'(\phi^{*,n+1}),\phi^{n+1}_{1}+\phi^{n+1}_{2}-\phi^{n}\big)}{\sqrt{E_{1}^{n}}}\sim \sqrt{E_{1}^{n}}+O(\Dt^{2}_{n+1}).
\err
Thus $\xi^{n+1}$ can be efficiently evaluated by solving the above nonlinear algebraic equation \eqref{xi_sol} using one step Newton's iteration with the initial condition $\xi^{0}=1$, since we only require a first order accuracy of $\xi^{n+1}$ in time with the VBDF-2 scheme \eqref{BDF_2}.

To summarize, the scheme \eqref{BDF_2} can be efficiently implemented
by following the lines:

$(i)$ Calculate $\phi^{n+1}_{1}$ and $\phi^{n+1}_{2}$ from \eqref{phi_1} and \eqref{phi_2} respectively, which can be realized in parallel.

$(ii)$ Evaluate $\xi^{n+1}$ using \eqref{xi_sol}, and then $\phi^{n+1}$ can be obtain by using \eqref{phi_epx}.

The total computational complexity at each time step is essentially of
solving two (or four) Poisson type equations with constant coefficients, since the computational cost for solving the nonlinear algebraic problem \eqref{xi_sol} by the one step Newton's iteration can be negligible comparing the cost of solving $\phi^{n+1}_{1}$ and $\phi^{n+1}_{2}$. Hence,the new proposed SAV method \eqref{re_prob} is essentially as efficient as the traditional SAV approach \cite{Shen17_1}. However, the traditional second order BDF based SAV approach in \cite{Shen17_1} is only unconditional energy stable on the uniform temporal mesh. And the energy stability of our proposed numerical scheme \eqref{BDF_2} can be preserved for more general temporal meshes with slight restrictions on the ratios of the adjacent time step sizes.
\section{$H^{2}$ bound for the numerical solutions}
\label{sec:sect3}
From the energy stability \eqref{eq2} and \eqref{eq2_1}, it follows that for all $n=1,2,\cdot,N$
\beq
\|\phi^{n}\|_{1}+|r^{n}|\leq M, \mbox{ if } \phi^{0}\in H^{1}(\Omega),
\eeq
for both of the first order scheme \eqref{eq1} and the VBDF-2 scheme \eqref{BDF_2}. Here M is a positive constant depending only on $\Omega$ and the initial condition $\phi(x,0)$. This implies the $H^{1}$ bound of the numerical solution $\phi^{n}$ and the uniform bound of the auxiliary variable $r^{n}$ for all $n=1,2,\cdots, N.$ In the following, we aim to derive the unifrom $H^{2}$ bound of the numerical solution of the above proposed schemes for gradient flows \eqref{prob}. For simplicity, we only consider the special case with $\sigma=1$ in what follows, and denote $\gamma_{*}:=\gamma_{*}(1).$ Then we have $4\leq\gamma_{*}<4.8645$. Now, we first recall some results about the solution existence and regularity of $L^{2}$ and $H^{-1}$ gradient flows.
\begin{lemma}[\cite{Temam97,SX18}]\label{Th1}
$(i)$ For the $L^{2}$-gradient flow, that is $G_{H}=-I$, assume $u^{0}\in H^{2}(\Omega)$ and
\bq\label{regu_f}
|F''(x)|<C(|x|^{p}+1), p>0 \mbox{ arbitrary if } d=1,2;~~0<p<4 \mbox{ if d=3.}
\eq
Then for any $T>0,$ the $L^{2}$-gradient flow \eqref{prob0} has a unique solution in the space
\beq
C([0,T];H^{2}(\Omega))\cap L^{2}(0,T;H^{3}(\Omega)).
\eeq\\
$(ii)$ For the $H^{-1}$-gradient flow, if $\phi^{0}\in H^{2}$, and \eqref{regu_f} and the following inequality holds
\bq\label{regu_f2}
|F'''(x)|<C(|x|^{q}+1), q>0 \mbox{ arbitrary if } d=1,2;~~0<q<3 \mbox{ if d=3.}
\eq
Then for any $T>0,$ the $H^{-1}$-gradient flow \eqref{prob0} has a unique solution in the space
\beq
C([0,T];H^{2}(\Omega))\cap L^{2}(0,T;H^{4}(\Omega)).
\eeq
\end{lemma}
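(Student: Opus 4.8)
The plan is to treat this as a classical well-posedness statement for a semilinear parabolic equation and to prove it by a Faedo--Galerkin approximation together with uniform a priori estimates and a compactness argument for existence, and by an energy estimate on the difference of two solutions for uniqueness. Let $\{w_k\}_{k\ge1}$ be the $L^2(\Omega)$-orthonormal eigenfunctions of $-\Delta$ under the boundary conditions \eqref{N_B}, let $V_m=\mathrm{span}\{w_1,\dots,w_m\}$ with $P_m$ the $L^2$-projection onto $V_m$, and seek $\phi_m\in C^1([0,T_m];V_m)$ solving the projected system: for $(i)$, $\partial_t\phi_m=-P_m\mu_m$ with $\mu_m=-\varepsilon^2\Delta\phi_m+P_mF'(\phi_m)$; for $(ii)$, $\partial_t\phi_m=\Delta\mu_m$ with the same $\mu_m$; and $\phi_m(0)=P_m\phi^0$. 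Local solvability of the coefficient ODEs is standard, and the a priori bounds below extend the solution to all of $[0,T]$.

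Testing with $\mu_m$ reproduces the dissipation identity $\frac{d}{dt}E(\phi_m)=(G_H\mu_m,\mu_m)\le0$, so that $E(\phi_m(t))\le E(\phi^0)$; since $E$ is coercive on $H^1$, $\phi_m$ is bounded in $L^\infty(0,T;H^1)$ and $\int_0^T|(G_H\mu_m,\mu_m)|\,dt$ is bounded uniformly in $m$, which yields $\mu_m$ bounded in $L^2(0,T;L^2)$ with $\partial_t\phi_m$ in $L^2(0,T;L^2)$ for $(i)$, resp.\ $\nabla\mu_m$ bounded in $L^2(0,T;L^2)$ with $\partial_t\phi_m$ in $L^2(0,T;(H^1)')$ for $(ii)$. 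The higher-order bounds come from testing the equation with $-\Delta\partial_t\phi_m$ and with $\Delta^2\phi_m$ (and, for $(ii)$, with further even-order elliptic operators): the linear parts generate the dissipative quantities $\|\nabla\partial_t\phi_m\|^2$, $\|\nabla\Delta\phi_m\|^2$ (resp.\ $\|\Delta^2\phi_m\|^2$), while the nonlinearity is rewritten as $\nabla F'(\phi_m)=F''(\phi_m)\nabla\phi_m$ and $\Delta F'(\phi_m)=F''(\phi_m)\Delta\phi_m+F'''(\phi_m)|\nabla\phi_m|^2$ and estimated in $L^2$ by H\"older's inequality, the Sobolev embeddings valid for $d\le3$, and the growth conditions \eqref{regu_f} and, for $(ii)$, \eqref{regu_f2}; a Gronwall argument then closes the estimates and gives $\phi_m$ bounded, uniformly in $m$, in $L^\infty(0,T;H^2)\cap L^2(0,T;H^3)$ for $(i)$ and in $L^\infty(0,T;H^2)\cap L^2(0,T;H^4)$ for $(ii)$.

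Passing to a subsequence converging weakly-$*$ in these spaces, the Aubin--Lions lemma gives strong convergence of $\phi_m$ in $L^2(0,T;H^1)$, hence a.e.\ convergence, so that $F'(\phi_m)$, $F''(\phi_m)$ (and $F'''(\phi_m)$ in case $(ii)$) converge in suitable $L^p$ spaces by the growth bounds; the limit $\phi$ then solves \eqref{prob0} with the stated regularity, the $H^2$-continuity being obtained first in the weak sense and upgraded to strong continuity via the energy identity. For uniqueness, let $\phi,\tilde\phi$ be two solutions with equal data and $e=\phi-\tilde\phi$; testing the equation for $e$ with $e$ in case $(i)$ and with $(-\Delta)^{-1}e$ (legitimate since $e$ is mean-free) in case $(ii)$, and using that $H^2(\Omega)\hookrightarrow L^\infty(\Omega)$ for $d\le3$ so that $F'$ is Lipschitz on the bounded range of the two solutions, one obtains $\frac{d}{dt}\|e\|^2\le C\|e\|^2$ in the $L^2$, resp.\ $H^{-1}$, norm, whence $e\equiv0$ by Gronwall.

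The Galerkin construction, the weak compactness, the identification of the limit, and the uniqueness argument are all routine. The delicate step is the higher-order a priori estimate in dimension $d=3$: the terms $F''(\phi)\nabla\phi$ and $F'''(\phi)|\nabla\phi|^2$ can be bounded in $L^2(\Omega)$ only after a Gagliardo--Nirenberg interpolation between the $L^\infty(0,T;H^1)$ bound already available and the higher Sobolev norm being estimated in the same inequality, and the thresholds $p<4$ in \eqref{regu_f} and $q<3$ in \eqref{regu_f2} are exactly the exponents for which this interpolation, followed by Gronwall, closes without circularity. Arranging the bootstrap so that each estimate uses only previously established bounds is therefore the main obstacle.
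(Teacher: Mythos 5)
The paper offers no proof of this lemma: it is imported verbatim, with attribution, from \cite{Temam97,SX18}, so there is no internal argument to compare yours against. Your outline --- Faedo--Galerkin approximation in the eigenbasis of $-\Delta$, the basic energy law giving $L^\infty(0,T;H^1)$ plus dissipation, higher-order a priori bounds from testing with $-\Delta\partial_t\phi_m$ and $\Delta^2\phi_m$ closed by Gagliardo--Nirenberg/Agmon interpolation and Gronwall, Aubin--Lions compactness to pass to the limit, and an $L^2$ (resp.\ $(-\Delta)^{-1}$-weighted) Gronwall estimate on the difference for uniqueness --- is exactly the standard route taken in those references, so in spirit you are reproducing the intended proof rather than diverging from it.

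Two caveats on completeness. First, your coercivity claim ``$E$ is coercive on $H^1$'' does not follow from the stated hypotheses, which only bound $F''$ and $F'''$ from above; one needs an implicit lower bound on $F$ (such as $F\geq -C$, which the paper's SAV setup effectively assumes via $E_1(\phi)>0$) to convert $E(\phi_m(t))\leq E(\phi^0)$ into an $H^1$ bound. You should state this explicitly rather than absorb it silently. Second, you correctly identify the only genuinely delicate step --- closing the $d=3$ higher-order estimates for $F''(\phi)\nabla\phi$ and $F'''(\phi)|\nabla\phi|^2$ under the thresholds $p<4$ and $q<3$ --- but you do not carry it out; as written this is an accurate roadmap of the proof in \cite{Temam97,SX18} rather than a self-contained proof. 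Neither point is a wrong turn, but both would need to be filled in before the argument stands on its own.
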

\subsection{$H^{-1}$ gradient flow}

The $H^{2}$ bound of the numerical solution from VBDF2 scheme \eqref{BDF_2} for $H^{-1}$ gradient flow \eqref{prob0} is given in the following theorem.
\begin{theorem}\label{Th3}
Assume $\phi^{0}\in H^{2}$, $0<\gamma_{n+1}\leq\gamma_{*}$, and $g(\cdot)\in C^{3}(\mathbb{R})$, then the solution of the VBDF-2 scheme with $G_H=\Delta$ satisfies
\brr\label{eq_p1}
&\dps\frac{\gamma^{\frac{3}{2}}_{n+2}}{1+\gamma_{n+2}}\frac{\|\phi^{n+1}-\phi^{n}\|^{2}}{2\Dt_{n+1}}
+\frac{\varepsilon^{2}}{2}\|\Delta\phi^{n+1}\|^{2}+\frac{\lambda}{2}\|\nabla\phi^{n+1}\|^{2}\\[9pt]
\leq&\dps C(M,\gamma_{*})(1+T)+\max\Big\{\frac{\gamma_{*}^{\frac{3}{2}}}{1+\gamma_{*}},1\Big\}\cdot\frac{\varepsilon^{2}}{2}\|\Delta\phi^{0}\|^{2}.
\err
\end{theorem}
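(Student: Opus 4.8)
The plan is to reproduce, at the level of the ``higher'' energy $\frac{\varepsilon^{2}}{2}\|\Delta\phi^{n+1}\|^{2}+\frac{\lambda}{2}\|\nabla\phi^{n+1}\|^{2}$, the same testing procedure that produced the modified energy law in \cref{st_the}, and then to close the bound by summation plus a Gr\"onwall-type absorption of the nonlinear remainder. First I would record what the already-established stability buys: from \eqref{eq2_1} one has $\|\phi^{n}\|_{1}+|r^{n}|\le M$ for all $n$, hence $E_{1}^{n}$ is bounded above and below by positive constants, so $|\xi^{n+1}|+|V(\xi^{n+1})|\le C$ and, using \eqref{chv}, $|\xi^{n+1}V(\xi^{n+1})-1|\le C\Dt_{n+1}^{2}$. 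Writing $\tilde g^{\,n+1}:=\xi^{n+1}V(\xi^{n+1})g'(\phi^{*,n+1})$ and using the growth conditions of \cref{Th1} together with $H^{1}(\Omega)\hookrightarrow L^{6}(\Omega)$ (valid for $d\le3$), the norms $\|\tilde g^{\,n+1}\|$ and $\|\nabla\tilde g^{\,n+1}\|$ are bounded by a constant $C(M)$; and the dissipative terms discarded in the proof of \cref{st_the} yield the summability $\sum_{n}\|\phi^{n+1}-\phi^{n}\|^{2}/\Dt_{n+1}\le C$.

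Next I would derive the one-step higher-order energy inequality. Eliminating $\mu^{n+\sigma}$ between \eqref{BDF_2_1} and \eqref{BDF_2_2} with $\sigma=1$ gives $F^{n+1}_{2}\phi=-\varepsilon^{2}\Delta^{2}\phi^{n+1}+\lambda\Delta\phi^{n+1}+\Delta\tilde g^{\,n+1}$. Taking the $L^{2}$ inner product with $-(\phi^{n+1}-\phi^{n})$ (a legitimate mean-zero test function, since the scheme conserves mass), integrating by parts twice with the boundary conditions \eqref{N_B}, applying \eqref{Id} to the $\|\Delta\phi\|$ and $\|\nabla\phi\|$ contributions and the convolution inequality \eqref{ideq1} (in $L^{2}$, with $a=\phi$) to the $F^{n+1}_{2}\phi$ contribution, I expect
\begin{equation*}
\mathcal E^{n+1}+D^{n+1}\;\le\;\mathcal E^{n}-\big(\nabla\tilde g^{\,n+1},\nabla(\phi^{n+1}-\phi^{n})\big),
\end{equation*}
where $\mathcal E^{n}:=\frac{\varepsilon^{2}}{2}\|\Delta\phi^{n}\|^{2}+\frac{\lambda}{2}\|\nabla\phi^{n}\|^{2}+G_{1}^{n}$ (with $G_{\sigma}^{n}$ as in \eqref{ideq1}, evaluated at $\sigma=1$) and $D^{n+1}:=\frac{\varepsilon^{2}}{2}\|\Delta(\phi^{n+1}-\phi^{n})\|^{2}+\frac{\lambda}{2}\|\nabla(\phi^{n+1}-\phi^{n})\|^{2}+G(\gamma_{n+1},\gamma_{n+2})\frac{\|\phi^{n+1}-\phi^{n}\|^{2}}{2\Dt_{n+1}}\ge0$. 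The step-ratio restriction $\gamma_{n+1}\le\gamma_{*}<\gamma_{**}(1)$ is precisely what makes $G_{1}^{n}\ge0$ and $G(\gamma_{n+1},\gamma_{n+2})\ge G(\gamma_{*},\gamma_{*})>0$, so $D^{n+1}$ is a genuine dissipation, and $\mathcal E^{n+1}$ is, up to the precise $\gamma$-dependent constant in front of $G_{1}^{n}$, the left-hand side of \eqref{eq_p1}.

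The decisive, and I expect hardest, step is to estimate the nonlinear remainder so that, summed over $n$, it contributes only $O(1+T)$ rather than $O(N)$: the naive bound $\|\nabla\tilde g^{\,n+1}\|\,\|\nabla(\phi^{n+1}-\phi^{n})\|$ with Young's inequality costs a fixed constant \emph{per step}. To extract the missing factor I would use the identity $\frac{1+2\gamma_{n+1}}{1+\gamma_{n+1}}(\phi^{n+1}-\phi^{n})=\Dt_{n+1}\Delta\mu^{n+1}+\frac{\gamma_{n+1}^{2}}{1+\gamma_{n+1}}(\phi^{n}-\phi^{n-1})$, which follows directly from \eqref{BDF_2_1}, together with $\nabla\Delta\mu^{n+1}=F^{n+1}_{2}(\nabla\phi)$ and the explicit form of $F^{n+1}_{2}$; combined with $\|\nabla\tilde g^{\,n+1}\|\le C(M)$ this bounds the remainder by $C(M,\gamma_{*})\big(\|\nabla(\phi^{n+1}-\phi^{n})\|+\|\nabla(\phi^{n}-\phi^{n-1})\|\big)$. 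Summing, and using the interpolation $\|\nabla w\|\le\|w\|^{1/2}\|\Delta w\|^{1/2}$ for mean-zero $w$ followed by Cauchy--Schwarz, gives $\sum_{n}\|\nabla(\phi^{n+1}-\phi^{n})\|\le C\tau^{1/2}\big(\sum_{n}D^{n+1}\big)^{1/2}$, so the whole nonlinear sum is $\le C(M,\gamma_{*})\tau^{1/2}\big(\sum_{n}D^{n+1}\big)^{1/2}$, which Young's inequality splits into $\tfrac12\sum_{n}D^{n+1}$ (absorbed on the left) plus a term $\le C(M,\gamma_{*})\tau\le C(M,\gamma_{*})T$. The delicate points here are the $\gamma_{*}$-dependent bookkeeping of the constants coming from \eqref{ideq1} and from the decomposition of $F^{n+1}_{2}$, and, in $d=3$, using the precise exponents in \eqref{regu_f}--\eqref{regu_f2} to keep $\|\nabla\tilde g^{\,n+1}\|$ finite; should the plain absorption above not suffice (e.g. because the $H^{1}$ bound alone does not control $\|\nabla\tilde g^{\,n+1}\|$ in three dimensions), one would fall back on a genuine discrete Gr\"onwall inequality of the type used in \cite{LTZ20}.

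Finally I would sum the one-step inequality from the first BDF2 step onward, treating the startup step separately — it uses the first-order scheme \eqref{eq1}, for which the analogous one-step estimate bounds $\mathcal E^{1}$ by $\max\{\gamma_{*}^{3/2}/(1+\gamma_{*}),1\}\cdot\frac{\varepsilon^{2}}{2}\|\Delta\phi^{0}\|^{2}+C(M,\gamma_{*})$, which produces exactly the prefactor appearing in \eqref{eq_p1} — drop the nonnegative $\sum_{n}D^{n+1}$ that survives on the left, and collect the $O(1)$ and $O(T)$ contributions to conclude \eqref{eq_p1}.
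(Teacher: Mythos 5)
Your overall architecture --- test the scheme with $\phi^{n+1}-\phi^{n}$, telescope $\frac{\varepsilon^{2}}{2}\|\Delta\phi^{n+1}\|^{2}+\frac{\lambda}{2}\|\nabla\phi^{n+1}\|^{2}$ via \eqref{Id}, invoke \eqref{ideq1} for the $F_{2}^{n+1}\phi$ term, and treat the startup step with the first-order scheme --- is exactly the paper's. The gap is in the nonlinear term, and it is a real one. The paper never moves a derivative onto the increment: it keeps the pairing $\frac{r^{n+1}}{\sqrt{E_{1}^{n}}}\big(\Delta g'(\phi^{*,n+1}),\phi^{n+1}-\phi^{n}\big)$ and applies Young's inequality with weight $\Dt_{n+1}$ against the $L^{2}$ dissipation $G(\gamma_{*},\gamma_{*})\frac{\|\phi^{n+1}-\phi^{n}\|^{2}}{2\Dt_{n+1}}$ that \eqref{ideq1} produces when the test function is $\phi^{n+1}-\phi^{n}$ itself (note this is \emph{not} the dissipation inherited from \cref{st_the}, which for $G_{H}=\Delta$ only controls $\|\nabla^{-1}(\phi^{n+1}-\phi^{n})\|^{2}/\Dt_{n+1}$ --- your claimed summability of $\|\phi^{n+1}-\phi^{n}\|^{2}/\Dt_{n+1}$ from the stability theorem is wrong for this flow). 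The per-step cost is then $C\Dt_{n+1}\|\Delta g'(\phi^{*,n+1})\|^{2}$, which sums to $C(1+T)$ \emph{provided} $\|\Delta g'(\phi^{*,n+1})\|\le C$ uniformly. That bound is obtained by the idea you are missing: a bootstrap on the $H^{2}$ bound itself. The first step gives $\phi^{1}\in H^{2}$ with a bound depending only on $\phi^{0},\Omega,T$; assuming the same for $\phi^{k}$, $k\le n$, gives $\|\phi^{*,n+1}\|_{L^{\infty}}\le C$ by $H^{2}\hookrightarrow L^{\infty}$, hence $\|g''(\phi^{*,n+1})\|_{L^{\infty}},\|g'''(\phi^{*,n+1})\|_{L^{\infty}}\le C$, and then $\|\Delta g'(\phi^{*,n+1})\|\le\|g''\|_{L^{\infty}}\|\Delta\phi^{*,n+1}\|+\|g'''\|_{L^{\infty}}\|\nabla\phi^{*,n+1}\|_{L^{4}}^{2}\le C$ using the interpolation \eqref{eq_L4}; the resulting bound on $\phi^{n+1}$ is again uniform, closing the induction.

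Your substitute for this mechanism does not work. First, $\|\nabla \tilde g^{\,n+1}\|\le C(M)$ cannot be deduced from the $H^{1}$ bound alone in $d=3$ under the growth condition \eqref{regu_f}: $g''(\phi^{*,n+1})$ is then only in some $L^{6/p}$ and its product with $\nabla\phi^{*,n+1}\in L^{2}$ need not lie in $L^{2}$; a discrete Gr\"onwall inequality cannot repair an unbounded coefficient. Second, the summation estimate $\sum_{n}\|\nabla(\phi^{n+1}-\phi^{n})\|\le C\tau^{1/2}\big(\sum_{n}D^{n+1}\big)^{1/2}$ is quantitatively false: on a uniform mesh with a smooth solution one has $\|\nabla(\phi^{n+1}-\phi^{n})\|\sim\Dt$ and $\sum_{n}D^{n+1}\sim T$, so the left side is of order $T$ while the right side is of order $\tau^{1/2}T^{1/2}$, and the claimed absorption into $\frac12\sum_{n}D^{n+1}$ plus an $O(\tau)$ remainder fails. (The side claim $|\xi^{n+1}V(\xi^{n+1})-1|\le C\Dt_{n+1}^{2}$ is an accuracy statement not available --- and not needed --- at this stage.) Once you replace your treatment of the nonlinear term by the paper's induction-plus-weighted-Young argument, the rest of your outline, including the separate first step producing the prefactor $\max\{\gamma_{*}^{3/2}/(1+\gamma_{*}),1\}$, goes through.
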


\begin{proof}
For the first time step, that is $n=0$, we use the discretization of first order scheme \eqref{eq1} with $V(\xi)\equiv1$, that is
\beq
\frac{\phi^{1}-\phi^{0}}{\Dt_{1}}+\varepsilon^{2}\Delta^{2}\phi^{1}-\lambda\Delta\phi^{1}=\frac{R^{1}}{\sqrt{E^{0}_{1}+C_{0}}}\Delta g'(\phi^{0}).
\eeq
Taking the $L^{2}$-inner product of the above equality with $\phi^{1}-\phi^{0}$ and using Young's inequality, gives
\brr\label{FS_1}
&\dps\frac{\|\phi^{1}-\phi^{0}\|^{2}}{\Dt_{1}}+\frac{\varepsilon^{2}}{2}\Big(\|\Delta\phi^{1}\|^{2}-\|\Delta\phi^{0}\|^{2}\Big)+\frac{\lambda}{2}\Big(\|\nabla\phi^{1}\|^{2}-\|\nabla\phi^{0}\|^{2}\Big)\\[9pt]
 \leq&\dps\frac{R^{1}}{\sqrt{E^{0}_{1}+C_{0}}}\Big(\Delta g'(\phi^{0}),\phi^{1}-\phi^{0}\Big)\leq C(M)\Dt_{1}\|\Delta g'(\phi^{0})\|^{2}+\frac{\|\phi^{1}-\phi^{0}\|^{2}}{2\Dt_{1}}\\
\err
Since $\phi^{0}\in H^{2}$ and $g(\cdot)\in C^{3}(\mathbb{R})$, we have $\|\phi^{0}\|_{L^{\infty}}\leq \|\phi^{0}\|_{H^{2}}\leq C$ and
\brr\label{est_g}
\|\Delta g'(\phi^{0})\|&=\|g''(\phi^{0})\Delta\phi^{0}+g'''(\phi^{0})|\nabla\phi^{0}|^{2}\|\\[7pt]
&\dps\leq \|g''(\phi^{0})\|_{L^{\infty}}\|\Delta\phi^{0}\|+\|g'''(\phi^{0})\|_{L^{\infty}}\|\nabla\phi^{0}\|^{2}_{L^{4}}
\leq C\big(1+\|\nabla\phi^{0}\|^{2}_{L^{4}}\big).
\err
By the Sobolev embedding theorem, $H^{d/4}\subset L^{4}$, and the interpolation inequality, we derive
\bq\label{eq_L4}
\|\nabla\phi^{0}\|_{L^{4}}\leq C\|\nabla\phi^{0}\|_{H^{\frac{d}{4}}}\leq C\|\nabla\phi^{0}\|^{1-\frac{d}{4}}\|\Delta\phi^{0}\|^{\frac{d}{4}}\leq C(M)\|\Delta\phi^{0}\|^{\frac{d}{4}}.
\eq
Combining with \eqref{est_g} and \eqref{eq_L4}, it gives $\|\Delta g'(\phi^{0})\|\leq C(M).$ Thus we can derive from \eqref{FS_1} that
\bq\label{eq_p}
\dps\frac{\|\phi^{1}-\phi^{0}\|^{2}}{2\Dt_{1}}+\frac{\varepsilon^{2}}{2}\|\Delta\phi^{1}\|^{2}+\frac{\lambda}{2}\|\nabla\phi^{1}\|^{2}
\leq C(M)(1+\Dt_{1})+\frac{\varepsilon^{2}}{2}\|\Delta\phi^{0}\|^{2}.
\eq
This indicates $\phi^{1}\in H^{2}$ and its $H^{2}$ bound only depends on $\phi^{0}, \Omega$ and $T$. Then we assume that $\phi^{k}\in H^{2}$ for all $k\leq n$ and the $H^{2}$ bound only depends on $\phi^{0}, \Omega$ and $T$. Thus, we have $\|g''(\phi^{*n+1})\|_{L^{\infty}}, \|g'''(\phi^{*n+1})\|_{L^{\infty}}\leq C$, where the positive constant $C$ only depends on $g(\cdot), \phi^{0}, \Omega$ and $T$.
Therefore, for $n\geq1$, making the inner product of \eqref{BDF_2_1} and \eqref{BDF_2_2} with $\phi^{n+1}-\phi^{n}$, and using the similar argument for $n=0$, we obtain
\bry
&\dps\Big(\frac{\gamma^{\frac{3}{2}}_{n+2}}{1+\gamma_{n+2}}+G(\gamma_{*},\gamma_{*})\Big)\frac{\|\phi^{n+1}-\phi^{n}\|^{2}}{2\Dt_{n+1}}-\frac{\gamma^{\frac{3}{2}}_{n+1}}{1+\gamma_{n+1}}\frac{\|\phi^{n}-\phi^{n-1}\|^{2}}{2\Dt_{n}}\\
&\dps+\frac{\varepsilon^{2}}{2}\big[\|\Delta\phi^{n+1}\|^{2}-\|\Delta\phi^{n}\|^{2}\big]+\frac{\lambda}{2}\big[\|\nabla\phi^{n+1}\|^{2}-\|\nabla\phi^{n}\|^{2}\big]\\
&\dps\leq\frac{R^{n+1}}{\sqrt{E_{1}^{n}}}\Big(\Delta g'(\phi^{*,n+1}),\phi^{n+1}-\phi^{n}\Big)\\
&\dps\leq C(M,\gamma_{*})\Dt_{n+1}\|\Delta g'(\phi^{*,n+1})\|^{2}+G(\gamma_{*},\gamma_{*})\frac{\|\phi^{n+1}-\phi^{n}\|^{2}}{2\Dt_{n+1}}\\
&\dps\leq C(M,\gamma_{*})\Dt_{n+1}+G(\gamma_{*},\gamma_{*})\frac{\|\nabla(\phi^{n+1}-\phi^{n})\|^{2}}{2\Dt_{n+1}}.
\ery
Summing up the above inequlity from 1 to $n$, and combining with \eqref{eq_p} gives
\bry
&\dps\frac{\gamma^{\frac{3}{2}}_{n+2}}{2(1+\gamma_{n+2})}\frac{\|\phi^{n+1}-\phi^{n}\|^{2}}{\Dt_{n+1}}
+\frac{\varepsilon^{2}}{2}\|\Delta\phi^{n+1}\|^{2}+\frac{\lambda}{2}\|\nabla\phi^{n+1}\|^{2}\\
&\dps \leq\frac{\gamma^{\frac{3}{2}}_{2}}{2(1+\gamma_{2})}\frac{\|\phi^{1}-\phi^{0}\|^{2}}{\Dt_{1}}
+\frac{\varepsilon^{2}}{2}\|\Delta\phi^{1}\|^{2}+\frac{\lambda}{2}\|\nabla\phi^{1}\|^{2}
+C(M,\gamma_{*})t_{n+1}\\[9pt]
&\dps \leq
\max\Big\{\frac{\gamma_{*}^{\frac{3}{2}}}{1+\gamma_{*}},1\Big\}\cdot\frac{\varepsilon^{2}}{2}\|\Delta(\phi^{0})\|^{2}
+C(M,\gamma_{*})(1+T).
\ery
Thus, we obtain the uniform $H^{2}$ bound \eqref{eq_p1} for the numerical solution $\phi^{n+1}$. Then the proof is completed.
\end{proof}
\subsection{$L^2$ gradient flow}
For the $L^{2}$ gradient flow, we will only state the $H^{2}$ bound of the numerical solution of VBDF2 scheme \eqref{BDF_2}. The proof is essentially same as the $H^{-1}$ gradient flow.
\begin{theorem}\label{Th2}
Assume $\phi^{0}\in H^{2}$, $0<\gamma_{n+1}\leq\gamma_{*}$ and $g(\cdot)\in C^{2}(\mathbb{R})$, then the solution of the VBDF2 scheme with $G_H=-I$ satisfies
\brr\label{eq_p2}
&\dps\frac{\gamma^{\frac{3}{2}}_{n+2}}{1+\gamma_{n+2}}\frac{\|\nabla(\phi^{n+1}-\phi^{n})\|^{2}}{2\Dt_{n+1}}
+\frac{\varepsilon^{2}}{2}\|\Delta\phi^{n+1}\|^{2}+\frac{\lambda}{2}\|\nabla\phi^{n+1}\|^{2}\\
\leq&\dps C(M,\gamma_{*})(1+T)+\max\Big\{\frac{\gamma_{*}^{\frac{3}{2}}}{1+\gamma_{*}},1\Big\}\cdot\frac{\varepsilon^{2}}{2}\|\Delta\phi^{0}\|^{2}.
\err
\end{theorem}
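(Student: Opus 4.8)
The plan is to mirror the proof of \cref{Th3} for the $H^{-1}$ gradient flow, the only structural change being that one tests with $-\Delta(\phi^{n+1}-\phi^{n})$ instead of with $\phi^{n+1}-\phi^{n}$, so that the $H^{2}$ information of the numerical solution is extracted through the gradient increments $\nabla\phi^{n+1}-\nabla\phi^{n}$ rather than through $\phi^{n+1}-\phi^{n}$; as in that proof I take $\sigma=1$ and argue by induction on $n$, the starting step $n=0$ being governed by the first order scheme \eqref{eq1}.

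For $n=0$ I would rewrite the $L^{2}$ version of \eqref{eq1} (with $V\equiv1$) as $(\phi^{1}-\phi^{0})/\Dt_{1}=\varepsilon^{2}\Delta\phi^{1}-\lambda\phi^{1}-\frac{r^{1}}{\sqrt{E^{0}_{1}}}g'(\phi^{0})$, take the $L^{2}$ inner product with $-\Delta(\phi^{1}-\phi^{0})$ and integrate by parts. The diffusion and stabilization terms yield $\frac{\varepsilon^{2}}{2}(\|\Delta\phi^{1}\|^{2}-\|\Delta\phi^{0}\|^{2})+\frac{\lambda}{2}(\|\nabla\phi^{1}\|^{2}-\|\nabla\phi^{0}\|^{2})$ via $2(a,a-b)\ge\|a\|^{2}-\|b\|^{2}$, while the explicit term becomes $-\frac{r^{1}}{\sqrt{E^{0}_{1}}}(g''(\phi^{0})\nabla\phi^{0},\nabla(\phi^{1}-\phi^{0}))$; since $\phi^{0}\in H^{2}\hookrightarrow L^{\infty}$ for $d\le3$ and $g\in C^{2}$, one has $\|g''(\phi^{0})\|_{L^{\infty}}\le C$ and thus $\|\nabla g'(\phi^{0})\|\le C(M)$, so Young's inequality turns this term into $C(M)\Dt_{1}+\frac{1}{2\Dt_{1}}\|\nabla(\phi^{1}-\phi^{0})\|^{2}$, the last part being absorbed by $\|\nabla(\phi^{1}-\phi^{0})\|^{2}/\Dt_{1}$ on the left. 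This reproduces the analogue of \eqref{eq_p}, namely $\frac{\|\nabla(\phi^{1}-\phi^{0})\|^{2}}{2\Dt_{1}}+\frac{\varepsilon^{2}}{2}\|\Delta\phi^{1}\|^{2}+\frac{\lambda}{2}\|\nabla\phi^{1}\|^{2}\le C(M)(1+\Dt_{1})+\frac{\varepsilon^{2}}{2}\|\Delta\phi^{0}\|^{2}$, so $\phi^{1}\in H^{2}$ with a bound depending only on $\phi^{0},\Omega,T$.

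Assuming that $\phi^{k}\in H^{2}$ with such a bound for all $k\le n$ (hence $\|g''(\phi^{*,n+1})\|_{L^{\infty}}\le C$, since $\phi^{*,n+1}$ is an affine combination of $\phi^{n}$ and $\phi^{n-1}$), I would then test \eqref{BDF_2_1}--\eqref{BDF_2_2} with $-\Delta(\phi^{n+1}-\phi^{n})$. The left-hand term $(F^{n+1}_{2}\phi,-\Delta(\phi^{n+1}-\phi^{n}))$ equals $(F^{n+1}_{2}(\nabla\phi),\nabla\phi^{n+1}-\nabla\phi^{n})$ after integration by parts, to which \eqref{ideq1} applies componentwise with $\nabla\phi$ in place of $\phi$; this generates a telescoping part (in the $H^{1}$-seminorm of the increments) together with the nonnegative remainder $G(\gamma_{n+1},\gamma_{n+2})\frac{\|\nabla(\phi^{n+1}-\phi^{n})\|^{2}}{2\Dt_{n+1}}$. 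The diffusion and stabilization terms again contribute $\frac{\varepsilon^{2}}{2}(\|\Delta\phi^{n+1}\|^{2}-\|\Delta\phi^{n}\|^{2})+\frac{\lambda}{2}(\|\nabla\phi^{n+1}\|^{2}-\|\nabla\phi^{n}\|^{2})$, while the explicit term $-\frac{r^{n+1}V(\xi^{n+1})}{\sqrt{E^{n}_{1}}}(g''(\phi^{*,n+1})\nabla\phi^{*,n+1},\nabla(\phi^{n+1}-\phi^{n}))$ is, using $|r^{n+1}|\le M$ and the boundedness of $\xi^{n+1},V(\xi^{n+1})$ (cf. \eqref{ap_xi}), the uniform positive lower bound of $\sqrt{E^{n}_{1}}$, the inductive $L^{\infty}$-bound on $g''$, and the $H^{1}$-bound $\|\phi^{*,n+1}\|_{1}\le C(M)$ that follows from \eqref{eq2_1}, dominated by $C(M,\gamma_{*})\Dt_{n+1}+G(\gamma_{*},\gamma_{*})\frac{\|\nabla(\phi^{n+1}-\phi^{n})\|^{2}}{2\Dt_{n+1}}$ via Young's inequality. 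Since $G(\gamma_{n+1},\gamma_{n+2})\ge G(\gamma_{*},\gamma_{*})>0$ whenever $\gamma_{n+1},\gamma_{n+2}\le\gamma_{*}$, the last term is absorbed; summing from $1$ to $n$, telescoping, and combining with the $n=0$ estimate and $\|\nabla\phi^{0}\|^{2}\le C(M)$ gives \eqref{eq_p2} and closes the induction.

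The step I expect to be the crux is keeping every constant independent of the step sizes and of the ratios: this is exactly what \eqref{ideq1} together with the positivity $G(\gamma_{n+1},\gamma_{n+2})\ge G(\gamma_{*},\gamma_{*})>0$ for $\gamma\le\gamma_{*}<\gamma_{**}(1)\approx4.8645$ supplies, since then the damaging $\|\nabla(\phi^{n+1}-\phi^{n})\|^{2}/\Dt_{n+1}$ factor produced by Young's inequality on the nonlinearity can be absorbed uniformly. A secondary delicate point is the apparent circularity of the induction: the $L^{\infty}$ control of $g''(\phi^{*,n+1})$ needed to close the nonlinear estimate is delivered by the inductive $H^{2}$-bound through $H^{2}\hookrightarrow L^{\infty}$ ($d\le3$), so at each step one must check that the $H^{2}$-bound that emerges still depends only on $\phi^{0}$, $\Omega$ and $T$, not on $n$ or on $\tau$.
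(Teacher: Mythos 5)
Your proposal is correct and follows exactly the route the paper intends: the paper omits the proof of this theorem, stating only that it is ``essentially the same'' as that of the $H^{-1}$ case, and your adaptation --- testing with $-\Delta(\phi^{n+1}-\phi^{n})$ so that the BDF term produces $\|\nabla(\phi^{n+1}-\phi^{n})\|^{2}/\Dt_{n+1}$ via \eqref{ideq1} applied to $\nabla\phi$, handling $n=0$ with the first order scheme, and closing the induction through the $H^{2}\hookrightarrow L^{\infty}$ control of $g''(\phi^{*,n+1})$ --- is precisely that adaptation, including the correct observation that only $g\in C^{2}$ is needed here since the nonlinearity enters through $\nabla g'$ rather than $\Delta g'$.
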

\begin{remark}
In particular, we have improved the $H^{2}$ bound results given in \cite{SX18}, where the proof of $H^{2}$ bound of the numerical solutions for $H^{-1}$ and $L^{2}$ gradient flows requires $\phi^{0}\in H^{4}$ and $H^{3}$ respectively. Here we only require $\phi^{0}\in H^{2}$ for both $L^{2}$ and $H^{-1}$ gradient flows, which is consistent with the requirement of the continuous problem to guarantee the $H^{2}$ bound of the solution.
\end{remark}
\section{Error analysis}
\label{sec:sect4}
In this section, we derive the error estimates of the VBDF2 scheme \eqref{BDF_2} for gradient flows.
Denote $e^{n}=\phi^{n}-\phi(t_{n})$ and $s^{n}=r^{n}-r(t_{n})$ with $e^{0}=s^{0}=0,$ and $\phi(t_{*,n+1})=\phi(t_{n})+\gamma_{n+1}(\phi(t_{n})-\phi(t_{n-1})).$ For simplicity, we only consider the case $V(\xi)=2-\xi$ hereafter.
\subsection{$H^{-1}$ gradient flow}
\begin{theorem}\label{th2}
For the $H^{-1}$ gradient flow, assume that $\phi^{0}:=\phi(\x,0)\in H^{2}, 0<\gamma_{n}\leq\gamma_{*},$ $\Dt_{1}\leq  \Dt^{\frac{4}{3}}$, and $g(\cdot)\in C^{3}(\mathbb{R})$. In addition, we suppose that
$$\phi_{t}\in L^{\infty}(0,T;L^{2})\cap L^{2}(0,T;L^{4}),\phi_{tt}\in L^{2}(0,T;H^{1}), \phi_{ttt}\in L^{2}(0,T;H^{-1}).$$
Then it holds that
\brr\label{error1}
&\dps\frac{\gamma^{\frac{3}{2}}_{n+2}}{2(1+\gamma_{n+2})}\frac{\|\nabla^{-1}(e^{n+1}-e^{n})\|^{2}}{\Dt_{n+1}}
+\frac{\varepsilon^{2}}{2}\|\nabla e^{n+1}\|^{2}+\frac{\lambda}{2}\|e^{n+1}\|^{2}+|s^{n+1}|^{2}\\[7pt]
\leq &\dps C\exp(T)\Big[\Dt^{2}\int_{0}^{T}\big(\|\phi_{t}(s)\|^{2}_{L^{4}}+\|\phi_{tt}(s)\|^{2}\big)ds\\
&\dps+\Dt^{4}\int_{0}^{T}\big(\|\phi_{tt}(s)\|^{2}_{H^{1}}+\|\phi_{ttt}(s)\|^{2}_{H^{-1}}\big)ds\Big],
\err
where the positive constant $C$ only depends $\phi^{0}, g(\cdot), \Omega, T$ and $\gamma_{*}.$
Moreover, we have
\brr\label{error2}
&\dps\frac{\gamma^{\frac{3}{2}}_{n+2}}{2(1+\gamma_{n+2})}\frac{\|\nabla^{-1}(e^{n+1}-e^{n})\|^{2}}{\Dt_{n+1}}
+\frac{\varepsilon^{2}}{2}\|\nabla e^{n+1}\|^{2}+\frac{\lambda}{2}\|e^{n+1}\|^{2}\\[7pt]
\leq &\dps C\exp(T)
\Dt^{4}\int_{0}^{T}\big(\|\phi_{t}(s)\|^{2}_{L^{4}}+\|\phi_{tt}(s)\|^{2}_{H^{1}}+\|\phi_{ttt}(s)\|^{2}_{H^{-1}}\big)ds.
\err
\end{theorem}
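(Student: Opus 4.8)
The plan is to derive an error recursion by subtracting the consistency form of the continuous equations (evaluated with the variable-step BDF2 stencil plus truncation errors) from the scheme \eqref{BDF_2}, and then to run a discrete Gr\"onwall argument in the $\|\nabla^{-1}\cdot\|$-norm that mirrors the energy-stability proof of \cref{st_the} and the $H^2$-bound proof of \cref{Th3}. First I would write the three error equations: for the phase error $e^{n+1}$ one gets $F_2^{n+\sigma}e = \Delta(\mu^{n+1}-\mu(t_{n+\sigma})) + \mathcal{T}^{n+1}$, where $\mathcal{T}^{n+1}$ is the BDF2 truncation error (bounded, via a Taylor/Peano-kernel estimate on the nonuniform stencil, by $C\Dt_{n+1}\|\phi_{tt}\|$ for the first-order piece and $C\Dt_{n+1}^{2}(\|\phi_{tt}\|_{H^1}+\|\phi_{ttt}\|_{H^{-1}})$ after the leading term is regrouped into a telescoping "$G_\sigma$"-type quantity — this is exactly the mechanism that turns the apparent first-order term in \eqref{ap_xi} into a second-order contribution). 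The nonlinear term splits as $\frac{r^{n+1}}{\sqrt{E_1^n}}V(\xi^{n+1})g'(\phi^{*,n+1}) - g'(\phi(t_{n+\sigma}))$, which I would break into (a) $g'(\phi^{*,n+1})-g'(\phi(t_{*,n+1}))$, controlled by $\|g''\|_{L^\infty}$ (finite by the $H^2$ bound of \cref{Th3} and Sobolev embedding) times $\|e^n\|+\gamma_{n+1}\|e^n-e^{n-1}\|$; (b) the extrapolation error $g'(\phi(t_{*,n+1}))-g'(\phi(t_{n+\sigma}))$, an $O(\Dt^2)$ consistency term absorbed into $\Dt^4\int\|\phi_{tt}\|^2$; and (c) the SAV factor error $(\xi^{n+1}V(\xi^{n+1})-1)$, which with $V(\xi)=2-\xi$ equals $-(\xi^{n+1}-1)^2$ and is therefore $O(s^{n+1}\cdot\text{(first order)}) = O((\Dt^2)\,?)$ — here one uses that $|\xi^{n+1}-1|\lesssim |s^{n+1}|+\Dt_{n+1}$ from \eqref{BDF_2_3}, so this term contributes $|s^{n+1}|^2$ on the left after pairing and only higher-order remainders on the right.

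Next I would test the phase error equation with $-\Delta^{-1}(e^{n+1}-e^{n})$ (so that the leading term produces, via inequality \eqref{ideq1} with $\phi$ replaced by $\nabla^{-1}e$, the telescoping sum $\tfrac{\gamma^{3/2}_{n+2}}{2(1+\gamma_{n+2})}\tfrac{\|\nabla^{-1}(e^{n+1}-e^n)\|^2}{\Dt_{n+1}} - (\text{same at }n)$ plus a positive $G(\gamma_*,\gamma_*)\tfrac{\|\nabla^{-1}(e^{n+1}-e^n)\|^2}{2\Dt_{n+1}}$ reservoir), test the $\mu$-equation with $e^{n+1}-e^n$ to generate $\tfrac{\varepsilon^2}{2}(\|\nabla e^{n+1}\|^2-\|\nabla e^n\|^2)+\tfrac\lambda2(\|e^{n+1}\|^2-\|e^n\|^2)$ via the identities \eqref{Id}, and test the $r$-error equation with $2s^{n+1}$ to get $\|s^{n+1}\|^2-\|s^n\|^2+\|s^{n+1}-s^n\|^2$. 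The cross terms $(g'(\cdot)\text{-errors},\,e^{n+1}-e^n)$ are then bounded by Cauchy–Schwarz and Young's inequality: one splits $\|e^{n+1}-e^n\|\le C\|\nabla(e^{n+1}-e^n)\|^{?}$... more precisely one uses $(f,e^{n+1}-e^n)=(\nabla^{-1}f,\nabla(e^{n+1}-e^n))$ isn't available, so instead bound $\|e^{n+1}-e^n\|\le \varepsilon\|\nabla(e^{n+1}-e^n)\|+C_\varepsilon\|\nabla^{-1}(e^{n+1}-e^n)\|$? — actually the clean route, following \cref{Th3}, is to keep $(\Delta g'\text{-type error},\,e^{n+1}-e^n)$ and use $\|e^{n+1}-e^n\|^2 \le \|\nabla(e^{n+1}-e^n)\|\,\|\nabla^{-1}(e^{n+1}-e^n)\|$ so the "bad" quadratic term $\|\nabla(e^{n+1}-e^n)\|^2/(2\Dt_{n+1})$ is dominated by the $G(\gamma_*,\gamma_*)$ reservoir exactly as in the last display of the proof of \cref{Th3}. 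What remains on the right after all absorptions is $C\Dt_{n+1}(\|e^n\|^2+\|\nabla e^n\|^2+|s^n|^2) + (\text{truncation})$.

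Then I would sum from $1$ to $n$. The $n=0$ startup step must be handled separately using the first-order scheme \eqref{eq1}; here the hypothesis $\Dt_1\le \Dt^{4/3}$ is exactly what is needed so that the $O(\Dt_1)$ first-order local error of scheme \eqref{eq1}, which gives a global contribution $O(\Dt_1^{2}/\Dt_1)=O(\Dt_1)\le O(\Dt^{4/3})$... I would in fact show the $n=0$ contribution to the squared error is $O(\Dt_1^3)=O(\Dt^4)$, matching \eqref{error2}. Finally, a discrete Gr\"onwall inequality of the form used in \cref{st_the}/\cref{Th3} (summing $\sum\Dt_{k+1}(\cdots)$ with no blowing-up prefactor because the $\gamma_{n+1}\le\gamma_*$ restriction keeps all kernel coefficients positive and uniformly bounded) yields the $C\exp(T)$ factor and \eqref{error1}; \eqref{error2} follows by noting that under the stated regularity $\Dt^2\int\|\phi_t\|^2_{L^4}$ can be upgraded — more carefully, the $\phi_t\in L^2(0,T;L^4)$ term in \eqref{error1} arises only from the $g''$-Lipschitz estimate where one actually has an extra $\Dt$ from the extrapolation $\phi^{*,n+1}$ versus $\phi(t_{n+\sigma})$, turning $\Dt^2$ into $\Dt^4$. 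The main obstacle I expect is the bookkeeping in step two: correctly choosing the test functions and the interpolation inequality ($\|e^{n+1}-e^n\|^2\le\|\nabla(e^{n+1}-e^n)\|\|\nabla^{-1}(e^{n+1}-e^n)\|$) so that every "difference-squared-over-$\Dt$" term generated on the right is strictly dominated by the positive reservoir $G(\gamma_*,\gamma_*)\|\nabla^{-1}(e^{n+1}-e^n)\|^2/(2\Dt_{n+1})$ coming from \eqref{ideq1}, with the $\varepsilon^2\|\nabla e^{n+1}\|^2$ dissipation soaking up the $\|\nabla(e^{n+1}-e^n)\|$ factor — exactly as in the closing estimates of the proof of \cref{Th3}, but now with the nonzero right-hand-side truncation terms that must be threaded through the Gr\"onwall loop without a blowing-up prefactor.
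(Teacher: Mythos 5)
Your overall route for \eqref{error1} is essentially the paper's: form the error equations, test with $-\Delta^{-1}(e^{n+1}-e^{n})$ and $2s^{n+1}$, invoke \eqref{ideq1} and the identities \eqref{Id} so that the difference-quotient terms telescope and the cross terms are absorbed into the positive reservoir $G(\gamma_{*},\gamma_{*})\|\nabla^{-1}(e^{n+1}-e^{n})\|^{2}/(2\Dt_{n+1})$, treat $n=0$ separately with the first-order scheme and $\Dt_{1}\leq\Dt^{4/3}$ to obtain an $O(\Dt^{4})$ startup contribution, and close with a discrete Gr\"onwall inequality. Your decomposition of the nonlinear residual into the Lipschitz piece $g'(\phi^{*,n+1})-g'(\phi(t_{*,n+1}))$, the extrapolation consistency piece, and the SAV-factor piece matches the paper's $I^{n}_{1}$, $T^{n}_{2}$ and $J^{n}_{2}$, and the uniform bounds on $g'$, $g''$ from \cref{Th3} are used in the same way.

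However, your explanation of how \eqref{error2} upgrades the $\Dt^{2}$ terms to $\Dt^{4}$ is not correct. The $\Dt^{2}\int(\|\phi_{t}\|^{2}_{L^{4}}+\|\phi_{tt}\|^{2})$ contribution in \eqref{error1} does not originate in the $g''$-Lipschitz/extrapolation estimate (that extrapolation error is already $O(\Dt^{2})$ pointwise and contributes $\Dt^{4}$ after squaring in both estimates); it originates in the auxiliary-variable equation, specifically in the first-order quadrature errors $v^{n}_{1}$, $v^{n}_{2}$ and in $|I^{n}_{2}|^{2}$, with $\|\phi_{t}\|^{2}_{L^{4}}$ entering through $|r_{tt}|^{2}$ as in \eqref{equa1}. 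Those terms are intrinsically first order and cannot be improved. The paper's mechanism for \eqref{error2} is a bootstrap: having established \eqref{error1}, one knows $|s^{n+1}|\lesssim\Dt$ and hence $|1-\xi^{n+1}|\lesssim\Dt$; one then rewrites the $\phi$-error equation alone with source $(\xi^{n+1}V(\xi^{n+1})-1)\Delta g'(\phi^{*,n+1})$, uses $|1-\xi^{n+1}V(\xi^{n+1})|^{2}=|1-\xi^{n+1}|^{4}\lesssim\Dt^{4}$ (precisely the identity you recorded in your item (c) but did not deploy at this stage), drops $|s^{n+1}|^{2}$ from the left-hand side, and runs the Gr\"onwall argument a second time. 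Without this second pass your argument delivers only \eqref{error1}, not \eqref{error2}; the muddled passage about absorbing $\|\nabla(e^{n+1}-e^{n})\|^{2}/(2\Dt_{n+1})$ into the $\nabla^{-1}$-reservoir should also be replaced by the paper's cleaner device of pairing every residual in the $\|\nabla^{-1}\cdot\|$ norm before applying Young's inequality.
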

\begin{proof}
It follows from Lemma \ref{Th1} and \ref{Th3} that $\|\phi(t)\|_{H^{2}}, \|\phi^{n}\|\leq C,$ in which the positive constant $C$ depends on $\phi^{0}, \Omega, T$ and $\gamma_{*}$. By the Sobolev embedding theorem $H^{2}\subseteq L^{\infty}$, we have
\bq\label{equ1}
|g(\phi)|,|g'(\phi)|, |g''(\phi)|, |g(\phi^{n})|, |g'(\phi^{n})|, |g''(\phi^{n})|\leq C
\eq
 for all $n=0,1,\cdots.$ We
use the expression $\mathcal{A}\lesssim\mathcal{B}$ to mean that $\mathcal{A}\leq C\mathcal{B}$, hereafter. Direct calculation gives
 \beq
 \dps r_{tt}=\frac{-1}{4\sqrt{(E_{1}(\phi))^{3}}}\Big(\int_{\Omega}g'(u)\phi_{t}d\x\Big)^{2}+\frac{1}{2\sqrt{E_{1}(\phi)}}\int_{\Omega}\big[g''(\phi)\phi^{2}_{t}+g'(\phi)\phi_{tt}\big]d\x.
 \eeq
Combining with \eqref{equ1}, we deduce that
\bq\label{equa1}
\int_{0}^{t_{n+1}}|r_{tt}|^{2}dt\lesssim\int_{0}^{t_{n+1}}(\|\phi_{t}\|^{2}_{L^{4}}+\|\phi_{tt}\|^{2})dt.
\eq
For $n\geq1$, the error equations can be derived from  \eqref{re_prob} and \eqref{BDF_2}, as follows
 \begin{subequations}\label{err}
 \begin{align}
 \dps F^{n+1}_{2}e+\varepsilon^{2}\Delta^{2} e^{n+1}-\lambda\Delta e^{n+1}=&\dps\frac{s^{n+1}V(\xi^{n+1})}{\sqrt{E_{1}^{n}}}\Delta g'(\phi^{*n+1})+\dps J^{n}_{1}
 +J^{n}_{2}+T^{n}_{1}+T^{n}_{2},\label{err_1}\\
 \dps s^{n+1}-s^{n}=&\dps\frac{V(\xi^{n+1})}{2\sqrt{E_{1}^{n}}}\int_{\Omega}g'(\phi^{*n+1})(e^{n+1}-e^{n})d\x \label{err_2}\\
 &\dps+\frac{1}{2}\int_{\Omega}J^{n}_{3}\cdot
(\phi(t_{n+1})-\phi(t_{n}))d\x-v^{n}_{1}+v^{n}_{2}, \notag
 \end{align}
 \end{subequations}
where
\bry
J^{n}_{1}:=& \dps r(t_{n+1})V(\xi^{n+1})\Big[\frac{\Delta g'(\phi^{*,n+1})}{\sqrt{E_{1}^{n}}}-\frac{\Delta g'(\phi(t_{*,n+1}))}{\sqrt{E_{1}(\phi(t_{n+1}))}}\Big],\\[8pt] J^{n}_{2}:=&\dps\frac{r(t_{n+1})(V(\xi^{n+1})-1)}{\sqrt{E_{1}(\phi(t_{n+1}))}}\Delta g'(\phi(t_{*,n+1})),\\[8pt]
J^{n}_{3}:=&\dps \frac{V(\xi^{n+1})g'(\phi^{*,n+1})}{\sqrt{E_{1}^{n}}}-\frac{g'(\phi(t_{n+1}))}{\sqrt{E_{1}(\phi(t_{n+1}))}},
\ery
and the truncation errors are given by
 \begin{subequations}\label{err1}
 \begin{align}
 T^{n}_{1}&\dps=\phi_{t}(t_{n+1})-\partial_{t}(\Pi_{2,n}\phi(t))|_{t=t_{n+1}}, ~T^{n}_{2}\dps=\Delta g'(\phi(t_{*,n+1}))-\Delta g'(\phi(t_{n+1})),\label{tr_err1}\\
 %&\dps=\frac{1}{2\Dt_{n+1}}\Big[(1+\gamma_{n+1})\int_{t_{n+1}}^{t_{n}}(t_{n}-s)^{2}\phi_{ttt}(s)ds +\frac{\gamma^{2}_{n+1}}{1+\gamma_{n+1}}\int_{t_{n-1}}^{t_{n+1}}(t_{n-1}-s)^{2}\phi_{ttt}(s)ds\Big] \notag\\
%T^{n}_{2}&\dps=g'(\phi(t_{n+1}))-g'(\phi(t_{*,n+1})),\label{tr_err2}\\
v^{n}_{1}&\dps=r(t_{n+1})-r(t_{n})-\Dt_{n+1}r_{t}(t_{n+1})=\int_{t_{n}}^{t_{n+1}}(t_{n}-s)r_{tt}(s)ds,\label{tr_err3}\\
v^{n}_{2}&\dps=\Big(\frac{g'(\phi(t_{n+1}))}{2\sqrt{E_{1}(\phi(t_{n+1}))}},\int_{t_{n}}^{t_{n+1}}(t_{n}-s)\phi_{tt}(s)ds\Big).\label{tr_err4}
 \end{align}
 \end{subequations}
Taking the $L^{2}$ inner product \eqref{err_1} and \eqref{err_2} with $(-\Delta)^{-1}(e^{n+1}-e^{n})$ and $2s^{n+1}$, respectively, and summing them up, gives
\bry
&\dps\frac{1}{2}\Big[\Big(\frac{\gamma^{\frac{3}{2}}_{n+2}}{1+\gamma_{n+2}}+G(\gamma_{*},\gamma_{*})\Big)\frac{\|\nabla^{-1}(e^{n+1}-e^{n})\|^{2}}{\Dt_{n+1}}-\frac{\gamma^{\frac{3}{2}}_{n+1}}{1+\gamma_{n+1}}\frac{\|\nabla^{-1}(e^{n}-e^{n-1})\|^{2}}{\Dt_{n}}\Big]\\[7pt]
&\dps+\frac{\varepsilon^{2}}{2}[\|\nabla e^{n+1}\|^{2}-\|\nabla e^{n}\|^{2}]+\frac{\lambda}{2}[\|e^{n+1}\|^{2}-\|e^{n}\|^{2}]+|s^{n+1}|^{2}-|s^{n}|^{2}\\[7pt]
\leq&\dps\big(\nabla^{-1}(J^{n}_{1}+J^{n}_{2}+T^{n}_{1}+T^{n}_{2}),\nabla^{-1}(e^{n+1}-e^{n})\big)+s^{n+1}\int_{\Omega}J^{n}_{3}\cdot
(\phi(t_{n+1})-\phi(t_{n}))d\x\\[7pt]
&\dps+2s^{n+1}(-v^{n}_{1}+v^{n}_{2})\\[7pt]
\leq&\dps C(\gamma_{*})\Dt_{n+1}\big[\|\nabla^{-1}J^{n}_{1}\|^{2}+\|\nabla^{-1}J^{n}_{2}\|^{2}+\|\nabla^{-1}T^{n}_{1}\|^{2}+\|\nabla^{-1}T^{n}_{2}\|^{2}\big]\\[7pt]
&\dps+G(\gamma_{*},\gamma_{*})\frac{\|\nabla^{-1}(e^{n+1}-e^{n})\|^{2}}{2\Dt_{n+1}}+C\Dt_{n+1}\|\phi_{t}\|_{L^{\infty}(0,T;L^{2})}\big[|s^{n+1}|^{2}+\|J^{n}_{3}\|^{2}\big]\\
&\dps+\Dt_{n+1}|s^{n+1}|^{2}+\frac{2}{\Dt_{n+1}}(|v^{n}_{1}|^{2}+|v^{n}_{2}|^{2}),
\ery
in which we have used the inequality \eqref{ideq1} and the identities \eqref{Id}.
Thus we can deduce from the above inequality that
\brr\label{equ2}
&\dps\frac{1}{2}\Big[\frac{\gamma^{\frac{3}{2}}_{n+2}}{1+\gamma_{n+2}}\frac{\|\nabla^{-1}(e^{n+1}-e^{n})\|^{2}}{\Dt_{n+1}}-\frac{\gamma^{\frac{3}{2}}_{n+1}}{1+\gamma_{n+1}}\frac{\|\nabla^{-1}(e^{n}-e^{n-1})\|^{2}}{\Dt_{n}}\Big]\\[7pt]
&\dps+\frac{\varepsilon^{2}}{2}\Big[\|\nabla(e^{n+1})\|^{2}-\|\nabla(e^{n})\|^{2}\Big]+\frac{\lambda}{2}\Big[\|e^{n+1}\|^{2}-\|e^{n}\|^{2}\Big]+|s^{n+1}|^{2}-|s^{n}|^{2}\\[7pt]
\lesssim&\dps \Dt_{n+1}\Big[\|\nabla^{-1}J^{n}_{1}\|^{2}+\|\nabla^{-1}J^{n}_{2}\|^{2}+\|J^{n}_{3}\|^{2}+|s^{n+1}|^{2}+\|\nabla^{-1}T^{n}_{1}\|^{2}\\[7pt]
&\dps+\|\nabla^{-1}T^{n}_{2}\|^{2}\Big]+\frac{2(|v^{n}_{1}|^{2}+|v^{n}_{2}|^{2})}{\Dt_{n+1}}.
\err
Note that
\bq\label{equat1}
\dps \nabla^{-1}J^{n}_{1}
=-r(t_{n+1})V(\xi^{n+1})\Big[\frac{\nabla I^{n}_{1}}{\sqrt{E_{1}^{n}}}+\nabla g'(\phi(t_{*,n+1}))\cdot I^{n}_{2}\Big],
%&\hspace{2cm}\dps+g'(\phi(t_{*,n+1}))\frac{E_{1}(\phi(t_{n+1}))-E^{n}_{1}}{\sqrt{E_{1}^{n}}\sqrt{E_{1}(\phi(t_{n+1}))}(\sqrt{E_{1}(\phi(t_{n+1}))}+\sqrt{E_{1}^{n}})}
\eq
where
\bq\label{equ6}
I^{n}_{1}:=g'(\phi^{*,n+1})-g'(\phi(t_{*,n+1})),\ \ I^{n}_{2}:=\frac{1}{\sqrt{E_{1}^{n}}}-\frac{1}{\sqrt{E_{1}(\phi(t_{n+1}))}}.
\eq
Therefore, it follows from \eqref{equat1} and \eqref{equ6} that
\bq\label{equ3}
\|\nabla^{-1}J^{n}_{1}\|^{2}\lesssim \|\nabla I^{n}_{1}\|^{2}+|I^{n}_{2}|^{2}.
\eq
From the definition of $J^{n}_{2}$ and $V(\xi)=2-\xi$, we have
\brr\label{equa5}
\|\nabla^{-1}J^{n}_{2}\|^{2}&\lesssim\dps |1-V(\xi^{n+1})|^{2}\dps\lesssim \Big|\xi^{n+1}-1\Big|^{2}\\
&\dps\lesssim\Big|\frac{r^{n+1}}{\sqrt{E_{1}^{n}}}-\frac{r(t_{n+1})}{\sqrt{E_{1}(\phi(t_{n+1}))}}\Big|^{2}\\[7pt]
&\dps\lesssim\Big|\frac{s^{n+1}}{\sqrt{E_{1}^{n}}}+r(t_{n+1})\cdot I^{n}_{2}\Big|^{2}\\[9pt]
&\dps\lesssim |s^{n+1}|^{2}+|I^{n}_{2}|^{2}.
\err
Then, it holds that
\brr\label{equ4}
\|J^{n}_{3}\|^{2}=&\dps\Big\|\frac{(V(\xi^{n+1})-1)g'(\phi^{*,n+1})}{\sqrt{E_{1}^{n}}}+\frac{I^{n}_{1}}{\sqrt{E_{1}^{n}}}+\frac{\Delta^{-1}T^{n}_{2}}{\sqrt{E_{1}^{n}}}+g'(\phi(t_{n+1}))\cdot I^{n}_{2}\Big\|^{2}\\[11pt]
\lesssim &\dps |1-V(\xi^{n+1})|^{2}+\|I^{n}_{1}\|^{2}+\|\Delta^{-1}T^{n}_{2}\|^{2}+|I^{n}_{2}|^{2}\\[7pt]
\lesssim&\dps |s^{n+1}|^{2}+\|I^{n}_{1}\|^{2}+\|\Delta^{-1}T^{n}_{2}\|^{2}+|I^{n}_{2}|^{2}.
\err
From \eqref{equ2}-\eqref{equ4}, it follows that
\brr\label{equ5}
&\dps\frac{1}{2}\Big[\frac{\gamma^{\frac{3}{2}}_{n+2}}{1+\gamma_{n+2}}\frac{\|\nabla^{-1}(e^{n+1}-e^{n})\|^{2}}{\Dt_{n+1}}-\frac{\gamma^{\frac{3}{2}}_{n+1}}{1+\gamma_{n+1}}\frac{\|\nabla^{-1}(e^{n}-e^{n-1})\|^{2}}{\Dt_{n}}\Big]\\[7pt]
&\dps+\frac{\varepsilon^{2}}{2}\big[\|\nabla e^{n+1}\|^{2}-\|\nabla e^{n}\|^{2}\big]
+\frac{\lambda}{2}\big[\|e^{n+1}\|^{2}-\|e^{n}\|^{2}\big]+|s^{n+1}|^{2}-|s^{n}|^{2}\\[7pt]
\lesssim&\dps \Dt_{n+1}\big[|s^{n+1}|^{2}+\|I^{n}_{1}\|^{2}+\|\nabla I^{n}_{1}\|^{2}+|I^{n}_{2}|^{2}
+\|\nabla^{-1}T^{n}_{1}\|^{2}+\|\nabla^{-1}T^{n}_{2}\|^{2}\\
&\dps+\|\Delta^{-1}T^{n}_{2}\|^{2}\big]
+\frac{2}{\Dt_{n+1}}(|v^{n}_{1}|^{2}+|v^{n}_{2}|^{2}).
\err
By the definition of $I^{n}_{1}$ and $I^{n}_{2}$ in \eqref{equ6}, we can get the following estimates
\brr\label{equ7}
\|I^{n}_{1}\|^{2}=&\dps \|g'(\phi^{*,n+1})-g'(\phi(t_{*,n+1}))\|^{2}\lesssim \|e^{n}\|^{2}+\|e^{n-1}\|^{2},\\[7pt]
|I^{n}_{2}|^{2}=&\dps\Big|\frac{E_{1}(\phi(t_{n+1}))-E^{n}_{1}}{\sqrt{E_{1}^{n}}\sqrt{E_{1}(\phi(t_{n+1}))}\big(\sqrt{E_{1}(\phi(t_{n+1}))}+\sqrt{E_{1}^{n}}\big)}\Big|^{2}\\[13pt]
\lesssim&\dps |E_{1}(\phi(t_{n+1}))-E^{n}_{1}|^{2}\\[7pt]
\lesssim&\dps |E_{1}(\phi(t_{n+1}))-E_{1}(\phi(t_{n}))|^{2}+|E_{1}(\phi(t_{n}))-E^{n}_{1}|^{2}\\[7pt]
\lesssim&\dps\Dt_{n+1}\int_{t_{n}}^{t_{n+1}}\|\phi_{t}(s)\|^{2}ds+\|e^{n}\|^{2}.
\err
Moreover, we have following estimate for $\nabla I^{n}_{1}$ by using the H\"{o}lder's inequality and the Sobolev embedding theorem, $H^{1}\subset L^{6}$,
\brr\label{est_I}
\|\nabla I^{n}_{1}\|^{2}=&\dps \|g''(\phi^{*,n+1})\nabla(\phi^{*,n+1}-\phi(t_{*,n+1}))\\[7pt]
&+(g''(\phi^{*,n+1})-g''(\phi(t_{*,n+1})))\nabla\phi(t_{*,n+1})\|^{2}\\[7pt]
\lesssim&\dps \|\nabla e^{n}\|^{2}+\|\nabla e^{n-1}\|^{2}+\|(\phi^{*,n+1}-\phi(t_{*,n+1}))\nabla\phi(t_{*,n+1})\|^{2}\\[7pt]
\lesssim&\dps \|\nabla e^{n}\|^{2}+\|\nabla e^{n-1}\|^{2}+\|\phi^{*,n+1}-\phi(t_{*,n+1})\|^{2}_{L^{6}}\|\nabla\phi(t_{*,n+1})\|^{2}_{L^{3}}\\[7pt]
\lesssim&\dps \|\nabla e^{n}\|^{2}+\|\nabla e^{n-1}\|^{2}+\|\phi^{*,n+1}-\phi(t_{*,n+1})\|^{2}_{H^{1}}\|\phi(t_{*,n+1})\|^{2}_{H^{2}}\\[7pt]
\lesssim&\dps \|\nabla e^{n}\|^{2}+\|\nabla e^{n-1}\|^{2}+\|e^{n}\|^{2}+\|e^{n-1}\|^{2}\\[7pt]
\err
For the truncation errors, we have the following estimates, seeing also \cite{CWYZ19}
\brr\label{equ8}
\|\nabla^{-1}T^{n}_{1}\|^{2}\lesssim&\dps (\Dt_{n}+\Dt_{n+1})^{3}\int_{t_{n-1}}^{t_{n+1}}\|\nabla^{-1}\phi_{ttt}(s)\|^{2}ds,\\
\|\Delta^{-1}T^{n}_{2}\|^{2}\lesssim&\dps \|\phi(t_{n+1})-\phi(t_{*,n+1})\|^{2}\lesssim (\Dt_{n}+\Dt_{n+1})^{3}\int_{t_{n-1}}^{t_{n+1}}\|\phi_{tt}(s)\|^{2}ds,\\
\|\nabla^{-1}T^{n}_{2}\|^{2}\lesssim&\dps \|\phi(t_{n+1})-\phi(t_{*,n+1})\|^{2}+\|\nabla(\phi(t_{n+1})-\phi(t_{*,n+1}))\|^{2}\\
\lesssim&\dps (\Dt_{n}+\Dt_{n+1})^{3}\int_{t_{n-1}}^{t_{n+1}}\|\phi_{tt}(s)\|^{2}_{H^{1}}ds,\\
|v^{n}_{1}|^{2}\lesssim&\dps \Dt^{3}_{n+1}\int_{t_{n}}^{t_{n+1}}|r_{tt}(s)|^{2}ds,\\
|v^{n}_{2}|^{2}\lesssim&\dps \Dt_{n+1}^{3}\Big\|\frac{g'(\phi(t_{n+1}))}{2\sqrt{E_{1}(\phi(t_{n+1}))}}\Big\|^{2}\int_{t_{n}}^{t_{n+1}}\|\phi_{tt}(s)\|^{2}ds\\
\lesssim&\dps \Dt^{3}_{n+1}\int_{t_{n}}^{t_{n+1}}\|\phi_{tt}(s)\|^{2}ds.
\err
Taking the inequalities \eqref{equ7}--\eqref{equ8} into \eqref{equ5} gives
\bry
&\dps\frac{1}{2}\Big[\frac{\gamma^{\frac{3}{2}}_{n+2}}{1+\gamma_{n+2}}\frac{\|e^{n+1}-e^{n}\|^{2}}{\Dt_{n+1}}-\frac{\gamma^{\frac{3}{2}}_{n+1}}{1+\gamma_{n+1}}\frac{\|e^{n}-e^{n-1}\|^{2}}{\Dt_{n}}\Big]\\[8pt]
&\dps+\frac{\varepsilon^{2}}{2}[\|\nabla(e^{n+1})\|^{2}-\|\nabla(e^{n})\|^{2}]+\frac{\lambda}{2}[\|e^{n+1}\|^{2}-\|e^{n}\|^{2}]+|s^{n+1}|^{2}-|s^{n}|^{2}\\
\ery
\bry
\lesssim&\dps \Dt_{n+1}\big[|s^{n+1}|^{2}+\|e^{n}\|^{2}+\|e^{n-1}\|^{2}\big]+\Dt^{2}_{n+1}\int_{t_{n}}^{t_{n+1}}\big(\|\phi_{t}(s)\|^{2}+\|\phi_{tt}(s)\|^{2}\\[8pt]
&\dps+|r_{tt}(s)|^{2}\big)ds+\Dt_{n+1}(\Dt_{n}+\Dt_{n+1})^{3}\int_{t_{n-1}}^{t_{n+1}}\big(\|\phi_{tt}(s)\|^{2}+\|\phi_{ttt}(s)\|^{2}\big)ds\\[8pt]
\lesssim&\dps \Dt_{n+1}\big[|s^{n+1}|^{2}+\|e^{n}\|^{2}+\|e^{n-1}\|^{2}\big]+\Dt^{2}\int_{t_{n}}^{t_{n+1}}\big(\|\phi_{t}(s)\|^{2}+\|\phi_{tt}(s)\|^{2}\\
&\dps+|r_{tt}(s)|^{2}\big)ds+\Dt^{4}\int_{t_{n-1}}^{t_{n+1}}\big(\|\phi_{tt}(s)\|^{2}_{H^{1}}+\|\phi_{ttt}(s)\|^{2}_{H^{-1}}\big)ds.
\ery
Then, we sum up the above inequality from 1 to $n$ to get
\brr\label{equ9}
&\dps\frac{\gamma^{\frac{3}{2}}_{n+2}}{2(1+\gamma_{n+2})}\frac{\|e^{n+1}-e^{n}\|^{2}}{\Dt_{n+1}}
+\frac{\varepsilon^{2}}{2}\|\nabla e^{n+1}\|^{2}+\frac{\lambda}{2}\|e^{n+1}\|^{2}+|s^{n+1}|^{2}\\[11pt]
&\dps-\Big[\frac{\gamma^{\frac{3}{2}}_{2}}{2(1+\gamma_{2})}\frac{\|e^{1}\|^{2}}{\Dt_{1}}+\frac{\varepsilon^{2}}{2}\|\nabla e^{1}\|^{2} +\frac{\lambda}{2}\|e^{1}\|^{2}+|s^{1}|^{2}\Big]\\[11pt]
\lesssim&\dps\sum_{k=1}^{n}\Dt_{k+1}\Big[|s^{k+1}|^{2}+\|e^{k}\|^{2}+\|e^{k-1}\|^{2}\Big]
+\Dt^{2}\int_{t_{1}}^{t_{n+1}}\Big(\|\phi_{t}(s)\|^{2}+\|\phi_{tt}(s)\|^{2}\\
&\dps+|r_{tt}(s)|^{2}\Big)ds+\Dt^{4}\int_{0}^{t_{n+1}}\big(\|\phi_{tt}(s)\|^{2}_{H^{1}}+\|\phi_{ttt}(s)\|^{2}_{H^{-1}}\big)ds.\\
\err
For the first time step, that is n=0, we use the first order scheme \eqref{eq1}, and the error equations are given as follows
\bry
 &\dps\frac{e^{1}}{\Dt_{1}}+\varepsilon^{2}\Delta^{2}e^{1}+\lambda\Delta e^{1}=\dps\frac{s^{1}}{\sqrt{E^{0}+C_{0}}}\Delta g'(\phi^{0})+\frac{r(t_{1})\Delta g'(\phi^{0})}{\sqrt{E^{0}+C_{0}}}-\frac{r(t_{1})\Delta g'(\phi^{0})}{\sqrt{E_{1}(\phi(t_{1}))+C_{0}}}\\
  &\hspace{3.7cm}\dps+\Delta (g(\phi^{0})-g(\phi(t_{1})))-\Dt_{1}^{-1}\int_{0}^{t_{1}}s\phi_{tt}(s)ds,\\
 &\dps s^{1}=\frac{\int_{\Omega}g'(\phi^{0})e^{1}d\x}{2\sqrt{E_{0}^{n}+C_{0}}}
 +\frac{1}{2}\int_{\Omega}\Big[\frac{g(\phi^{0})}{\sqrt{E^{0}_{1}+C_{0}}}-\frac{g(\phi(t_{1}))}{\sqrt{E_{1}(\phi(t_{1}))}}\Big]
(\phi(t_{1})-\phi(t_{0}))d\x\\[7pt]
&\hspace{0.7cm}+v^{0}_{1}+v^{0}_{2}.
\ery
Taking a similar argument of the case $n\geq1$, we can deduce the following estimate with the assumption $\Dt_{1}\leq \Dt^{4/3}$
\bq\label{equa2}
 \dps\frac{\|\nabla^{-1}e^{1}\|^{2}}{2\Dt_{1}}+\frac{\varepsilon^{2}}{2}\|\nabla e^{1}\|^{2}+\frac{\lambda}{2} \|e^{1}\|^{2}+\|s^{1}\|^{2}\lesssim \Dt_{1}^{3}\lesssim\Dt^{4},
\eq
where the positive constant $C$ depends on the $\|\phi_{t}\|_{H^{1}},\|\phi_{tt}\|_{H^{-1}}$ and $|r_{tt}|$. Combining with \eqref{equa1}, \eqref{equ9} and \eqref{equa2}, and together with the discrete Gronwall's lemma, we obtain the desired estimate \eqref{equa3}.

Next, we prove the estimate \eqref{equa4}, which indicates the second order accuracy of the numerical solution $\phi^{n}$ in time. For $n\geq1$, the error equation can be written as
\beq
F^{n+1}_{2}e+\varepsilon^{2}\Delta^{2} e^{n+1}-\lambda\Delta e^{n+1}=\big(\xi^{n+1}V(\xi^{n+1})-1\big)\Delta g'(\phi^{*,n+1})+\Delta I^{n}_{1}+T^{n}_{1}+T^{n}_{2}.
\eeq
Taking the $L^{2}$ inner product of the above equation with $(-\Delta)^{-1}(e^{n+1}-e^{n})$, and employing the Young's inequality, we get
\bry
&\dps\frac{1}{2}\Big[\big(\frac{\gamma^{\frac{3}{2}}_{n+2}}{1+\gamma_{n+2}}+G(\gamma_{*},\gamma_{*})\big)\frac{\|\nabla^{-1}(e^{n+1}-e^{n})\|^{2}}{\Dt_{n+1}}-\frac{\gamma^{\frac{3}{2}}_{n+1}}{1+\gamma_{n+1}}\frac{\|\nabla^{-1}(e^{n}-e^{n-1})\|^{2}}{\Dt_{n}}\Big]\\[7pt]
&\dps+\frac{\varepsilon^{2}}{2}[\|\nabla e^{n+1}\|^{2}-\|\nabla e^{n}\|^{2}]+\frac{\lambda}{2}[\|e^{n+1}\|^{2}-\|e^{n}\|^{2}]\\[7pt]
\leq&\dps C(\gamma_{*})\Dt_{n+1}\Big[|1-\xi^{n+1}V(\xi^{n+1})|^{2}+\|\nabla I^{n}_{1}\|^{2}+\|\nabla^{-1}T^{n}_{1}\|^{2}+\|\nabla^{-1}T^{n}_{2}\|^{2}\Big]\\
&\dps+G(\gamma_{*},\gamma_{*})\frac{\|e^{n+1}-e^{n}\|^{2}}{2\Dt_{n+1}}.
\ery
It follows from \eqref{equa5}, \eqref{equ7} and \eqref{equa3} that
\beq
|1-\xi^{n+1}V(\xi^{n+1})|^{2}=|1-\xi^{n+1}|^{4}\lesssim\Dt^{4}.
\eeq
Therefore, using the estimates of $\|\nabla I^{n}_{1}\|^{2}$, $\|\nabla^{-1}T^{n}_{1}\|^{2}$ and $\|\nabla^{-1}T^{n}_{2}\|^{2}$ in the above discussion, and the discrete Gronwall's lemma, one can easily derive the desired estimate \eqref{equa4}. Then we complete the proof.
\end{proof}

\subsection{$L^{2}$ gradient flow}
Since the proof of the associated error estimates for $L^{2}$ gradient flow is essentially similar to the case of $H^{-1}$ gradient flow, we only state it below and leave the proof for the interested readers.
\begin{theorem}\label{th1}
For the $L^{2}$ gradient flow, assume that $\phi^{0}:=\phi(\x,0)\in H^{2}, 0<\gamma_{n}\leq\gamma_{*},$ $\Dt_{1}\leq \Dt^{\frac{4}{3}}$, and $g(\cdot)\in C^{2}(\mathbb{R})$. Provided that
$$\phi_{t}\in L^{\infty}(0,T;L^{2})\cap L^{2}(0,T;L^{4}),\phi_{tt}, \phi_{ttt}\in L^{2}(0,T;L^{2}),$$
we have
\brr\label{equa3}
&\dps\frac{\gamma^{\frac{3}{2}}_{n+2}}{2(1+\gamma_{n+2})}\frac{\|e^{n+1}-e^{n}\|^{2}}{\Dt_{n+1}}
+\frac{\varepsilon^{2}}{2}\|\nabla e^{n+1}\|^{2}+\frac{\lambda}{2}\|e^{n+1}\|^{2}+|s^{n+1}|^{2}\\[11pt]
\leq &\dps C\exp(T)\Big[\Dt^{2}\int_{0}^{T}\big(\|\phi_{t}(s)\|^{2}_{L^{4}}+\|\phi_{tt}(s)\|^{2}\big)ds\\
&\dps+\Dt^{4}\int_{0}^{T}\big(\|\phi_{tt}(s)\|^{2}+\|\phi_{ttt}(s)\|^{2}\big)ds\Big].
\err
where the positive constant $C$ only depends $\phi^{0}, g(\cdot), \Omega, T$ and $\gamma_{*}.$
Moreover,
\brr\label{equa4}
&\dps\frac{\gamma^{\frac{3}{2}}_{n+2}}{2(1+\gamma_{n+2})}\frac{\|e^{n+1}-e^{n}\|^{2}}{\Dt_{n+1}}
+\frac{\varepsilon^{2}}{2}\|\nabla e^{n+1}\|^{2}+\frac{\lambda}{2}\|e^{n+1}\|^{2}\\[11pt]
\leq &\dps C\exp(T)
\Dt^{4}\int_{0}^{T}\big(\|\phi_{t}(s)\|^{2}_{L^{4}}+\|\phi_{tt}(s)\|^{2}+\|\phi_{ttt}(s)\|^{2}\big)ds.
\err
\end{theorem}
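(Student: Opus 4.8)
The plan is to transcribe the proof of Theorem~\ref{th2} with the operator $G_H=\Delta$ replaced by $G_H=-I$; the only structural change is that the error equations are tested in $L^2$ instead of in $H^{-1}$, which makes the argument slightly lighter and explains why $g\in C^2(\mathbb{R})$ (rather than $C^3$) suffices. First I would collect the a priori bounds: Theorem~\ref{Th1}(i) together with Theorem~\ref{Th2} gives $\|\phi(t)\|_{H^2}\le C$ and $\|\phi^n\|_1\le C$ uniformly, so by $H^2\hookrightarrow L^\infty$ the quantities $g',g''$ evaluated at $\phi(t_n)$, $\phi^n$ and at the extrapolated states $\phi^{*,n+1}$, $\phi(t_{*,n+1})$ are uniformly bounded; in particular $g'$ is Lipschitz on the range actually visited. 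As in \eqref{equa1} I would also record $\int_0^{t_{n+1}}|r_{tt}|^2\,dt\lesssim\int_0^{t_{n+1}}\bigl(\|\phi_t\|_{L^4}^2+\|\phi_{tt}\|^2\bigr)\,dt$, the $L^4$-norm of $\phi_t$ entering through the term $\int_\Omega g''(\phi)\phi_t^2\,d\x$ in $r_{tt}$.

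Next I would set up the error system. Subtracting \eqref{BDF_2} (with $\sigma=1$, $V(\xi)=2-\xi$) from \eqref{re_prob}, for $n\ge1$,
\[
F_2^{n+1}e-\varepsilon^2\Delta e^{n+1}+\lambda e^{n+1}=-\frac{s^{n+1}V(\xi^{n+1})}{\sqrt{E_1^n}}\,g'(\phi^{*,n+1})-J_1^n-J_2^n-T_1^n-T_2^n ,
\]
together with the companion identity for $s^{n+1}-s^n$, where $J_1^n,J_2^n,J_3^n$ are the consistency defects of the auxiliary coupling defined exactly as in the proof of Theorem~\ref{th2} but with the Laplacians deleted, $T_1^n=\phi_t(t_{n+1})-\partial_{t}(\Pi_{2,n}\phi(t))|_{t=t_{n+1}}$, and $T_2^n=g'(\phi(t_{*,n+1}))-g'(\phi(t_{n+1}))$. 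I would then take the $L^2$ inner product of the $e$-equation with $e^{n+1}-e^n$ --- the substitute for the $(-\Delta)^{-1}(e^{n+1}-e^n)$ used in the $H^{-1}$ case --- and of the $s$-equation with $2s^{n+1}$, add them, and apply \eqref{ideq1} and the identities \eqref{Id}: the finite-difference term produces a boundary quadratic history term of the form appearing in the statement plus a coercive multiple of $\|e^{n+1}-e^n\|^2/\Dt_{n+1}$, the elliptic and zeroth-order terms telescope into $\frac{\varepsilon^2}{2}\bigl(\|\nabla e^{n+1}\|^2-\|\nabla e^n\|^2\bigr)+\frac{\lambda}{2}\bigl(\|e^{n+1}\|^2-\|e^n\|^2\bigr)$ up to a discarded nonnegative remainder, and $2s^{n+1}(s^{n+1}-s^n)$ yields $|s^{n+1}|^2-|s^n|^2$.

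The main work is the control of the right-hand side. With $g'\in C^1$ and the a priori bounds, writing $I_1^n=g'(\phi^{*,n+1})-g'(\phi(t_{*,n+1}))$ and $I_2^n=\frac{1}{\sqrt{E_1^n}}-\frac{1}{\sqrt{E_1(\phi(t_{n+1}))}}$, I would get $\|I_1^n\|^2\lesssim\|e^n\|^2+\|e^{n-1}\|^2$ and $|I_2^n|^2\lesssim|E_1(\phi(t_{n+1}))-E_1^n|^2\lesssim\Dt_{n+1}\int_{t_n}^{t_{n+1}}\|\phi_t\|^2\,ds+\|e^n\|^2$; since $V(\xi)=2-\xi$, $|1-V(\xi^{n+1})|=|\xi^{n+1}-1|\lesssim|s^{n+1}|+|I_2^n|$, so $\|J_1^n\|^2+\|J_2^n\|^2+\|J_3^n\|^2\lesssim|s^{n+1}|^2+\|e^n\|^2+\|e^{n-1}\|^2+\Dt_{n+1}\int_{t_n}^{t_{n+1}}\|\phi_t\|^2\,ds$. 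Only first differences of $g'$ appear --- no $\nabla I_1^n$ --- which is exactly why $C^2$ regularity is enough and is the one point where the $L^2$ case is genuinely lighter than the $H^{-1}$ case. For the consistency errors, Taylor expansions with integral remainder give $\|T_1^n\|^2\lesssim(\Dt_n+\Dt_{n+1})^3\int_{t_{n-1}}^{t_{n+1}}\|\phi_{ttt}\|^2\,ds$, $\|T_2^n\|^2\lesssim(\Dt_n+\Dt_{n+1})^3\int_{t_{n-1}}^{t_{n+1}}\|\phi_{tt}\|^2\,ds$, $|v_1^n|^2\lesssim\Dt_{n+1}^3\int_{t_n}^{t_{n+1}}|r_{tt}|^2\,ds$ and $|v_2^n|^2\lesssim\Dt_{n+1}^3\int_{t_n}^{t_{n+1}}\|\phi_{tt}\|^2\,ds$. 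Young's inequality absorbs every $\|e^{n+1}-e^n\|^2/\Dt_{n+1}$ contribution into the coercive term, summation from $1$ to $n$ telescopes the energy-type quantities, and the first step $n=0$, handled with the first-order scheme \eqref{eq1} ($V\equiv1$), contributes $O(\Dt_1^3)=O(\Dt^4)$ by the hypothesis $\Dt_1\le\Dt^{4/3}$; the discrete Gr\"onwall lemma then yields \eqref{equa3}.

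Finally, for \eqref{equa4} I would bootstrap. Estimate \eqref{equa3} already gives $|\xi^{n+1}-1|^2\lesssim|s^{n+1}|^2+|I_2^n|^2\lesssim\Dt^2$, and since $V(\xi)=2-\xi$ one has $1-\xi V(\xi)=(1-\xi)^2$, hence $|1-\xi^{n+1}V(\xi^{n+1})|^2=|\xi^{n+1}-1|^4\lesssim\Dt^4$. Testing the reduced error equation $F_2^{n+1}e-\varepsilon^2\Delta e^{n+1}+\lambda e^{n+1}=(1-\xi^{n+1}V(\xi^{n+1}))g'(\phi^{*,n+1})-I_1^n-T_1^n-T_2^n$ with $e^{n+1}-e^n$ and rerunning the same estimates (now without the $s$-variable) through discrete Gr\"onwall gives \eqref{equa4}. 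I do not expect a genuinely new obstacle: as in Theorem~\ref{th2} the delicate bookkeeping is in propagating the auxiliary-variable defects through $I_2^n$ --- which forces the $\Dt^2\!\int\|\phi_t\|^2$-type term and, via $r_{tt}$, the $L^4$-regularity of $\phi_t$ --- and in matching the first-step truncation order to the restriction $\Dt_1\le\Dt^{4/3}$.
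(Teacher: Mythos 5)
Your proposal is correct and is exactly the argument the paper intends: the paper omits the proof of Theorem~\ref{th1}, stating it is "essentially similar" to that of Theorem~\ref{th2}, and your transcription — testing with $e^{n+1}-e^{n}$ in place of $(-\Delta)^{-1}(e^{n+1}-e^{n})$, deleting the Laplacians from $J_{1}^{n},J_{2}^{n},T_{2}^{n}$ so that only $I_{1}^{n}$ (not $\nabla I_{1}^{n}$) is needed, which is precisely why $g\in C^{2}$ suffices — matches the intended route, including the first-step treatment under $\Dt_{1}\le\Dt^{4/3}$ and the bootstrap via $|1-\xi^{n+1}V(\xi^{n+1})|^{2}=|1-\xi^{n+1}|^{4}\lesssim\Dt^{4}$ for \eqref{equa4}.
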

\begin{remark}
For the more general $V(\xi)$ satisfying \eqref{chv}, we use Taylor expansion of the functions $L_{1}(\xi)=1-V(\xi)$ and $L(\xi)=1-\xi V(\xi)$ at $\xi=1$, then there exist real number $\eta_{1}$ and $\eta_{2}$ between $\xi$ and $1$ such that
\bq
1-V(\xi)=V^{'}(\eta_{1})(1-\xi),
\ \ 1-\xi V(\xi)=L^{''}(\eta_{2})(1-\xi)^{2},
\eq
Using this property, one can also easily derive the estimates \eqref{error1}, \eqref{error2}, \eqref{equa3}, and \eqref{equa4} for general case of $V(\xi)$ by following the above discussion for the special case $V(\xi)=2-\xi$. Note that we only give the error analysis for semi-discrete scheme \eqref{BDF_2} in this section. For the fully discrete scheme, the associated error estimate can be easily derived by following a similar argument for the semi-discrete problem. One can also refer to the fully discrete error analysis of the SAV approaches with the finite difference method \cite{LSR19} and the finite element method \cite{CMS20} for the spatial discretization on the uniform temporal mesh.
\end{remark}

\section{Numerical results}
\label{sec:sect5}
In this section, we present some numerical examples to validate the derived theoretical results of the proposed schemes in terms of stability and accuracy. For simplicity, we set $\sigma=1$ and $\lambda=1$ throughout the following numerical tests.

\subsection{Test of the convergence order}

We first consider the following gradient flows problem, subject to the periodic boundary condition:
\bq\label{ex2}
\begin{cases}
\begin{array}{r@{}l}
&\dps\frac{\partial \phi}{\partial t}+G_H\big(\varepsilon^{2}\Delta \phi+\phi(1-\phi^{2})\big)=0,
\quad \mbox{in } (0,2\pi)^2\times(0,T],\\[9pt]
&\phi(\x,0)=\phi_{0}(\x), \quad \forall \x\in (0,2\pi)^2,
\end{array}
\end{cases}
\eq
where $\varepsilon^{2}=0.01,$ $G_H$ is defined in \eqref{ef}, and
\beq
\phi_{0}=
\begin{cases}
\begin{array}{r@{}l}
&\sin x\sin y,\quad \mbox{ for the $L^{2}$ gradient flow, }\\[9pt]
&0.1(\sin3x\sin2y+\sin5x\sin5y),\quad \mbox{ for the $H^{-1}$ gradient flow.}
\end{array}
\end{cases}
 \eeq

We use the Fourier spectral method with $128\times128$ Fourier modes for the spatial discretization.
This used Fourier mode number has been checked to be large enough such that the spacial discretization error can be negligible compared to the error from temporal discretization, seeing \cite{HAX19}.
Since there is no analytical solution available for the above gradient flows, we use
the numerical solution of SAV BDF2 scheme \cite{HAX19} with uniform small enough
time step size $\Delta t=1e-5$ as the reference solution.
To check the temporal accuracy,
we firstly employed the proposed VBDF2 scheme \eqref{BDF_2} to  both
$L^2$ and $H^{-1}$ gradient flows \eqref{ex2} on the uniform mesh with some different functions of $V(\xi)$.
In Figure \ref{fig1},
the $L^{\infty}$ error at $T=1$ is presented as functions of the time
step sizes in log-log scale.
It is shown that the VBDF2 scheme \eqref{BDF_2} for numerical approximation to $\phi$ and $\xi$ achieve the expected convergence rate of second order and first order for all tested functions $V(\xi)$, respectively.
 Here the used auxiliary function $V(\xi)$ in Figure \ref{fig1} is set to be $2-\xi,\exp(1-\xi)$ and $1+\sin(1-\xi)$ for different simulation.
 All these functions satisfy the assumption condition \eqref{ap_xi} to guarantee the second order approximation to the unknown function $\phi$.
 Next, we check the accuracy of the proposed scheme \eqref{BDF_2} on a nonuniform temporal mesh to verify our theoretical convergence results in Theorem \ref{th2} and Theorem \eqref{th1}.
 The nonuniform temporal mesh $\{\widehat{t}_{n}\}_{n=0}^{N}$ used here is the uniform mesh $\{t_{n}=n\tau\}_{n=0}^{N}$ with $40\%$ perturbation.
 We set $V(\xi)=2-\xi$ in the VBDF2 scheme \eqref{BDF_2} for this nonuniform mesh. As shown in Table \ref{tabel1}, the proposed VBDF2 scheme \eqref{BDF_2} is of second order accuracy for the phase function $\phi$ on the nonuniform meshes, even for the cases with the adjacent time step ratio $\gamma_{n}>4.8645$.
 Thus the $\gamma_{**}=4.8645$ may be not the optimal upper bound constrain for the adjacent time step ratios, which worth further investigation.

\begin{figure*}[htbp]
\begin{minipage}[t]{0.49\linewidth}
\centerline{\includegraphics[scale=0.45]{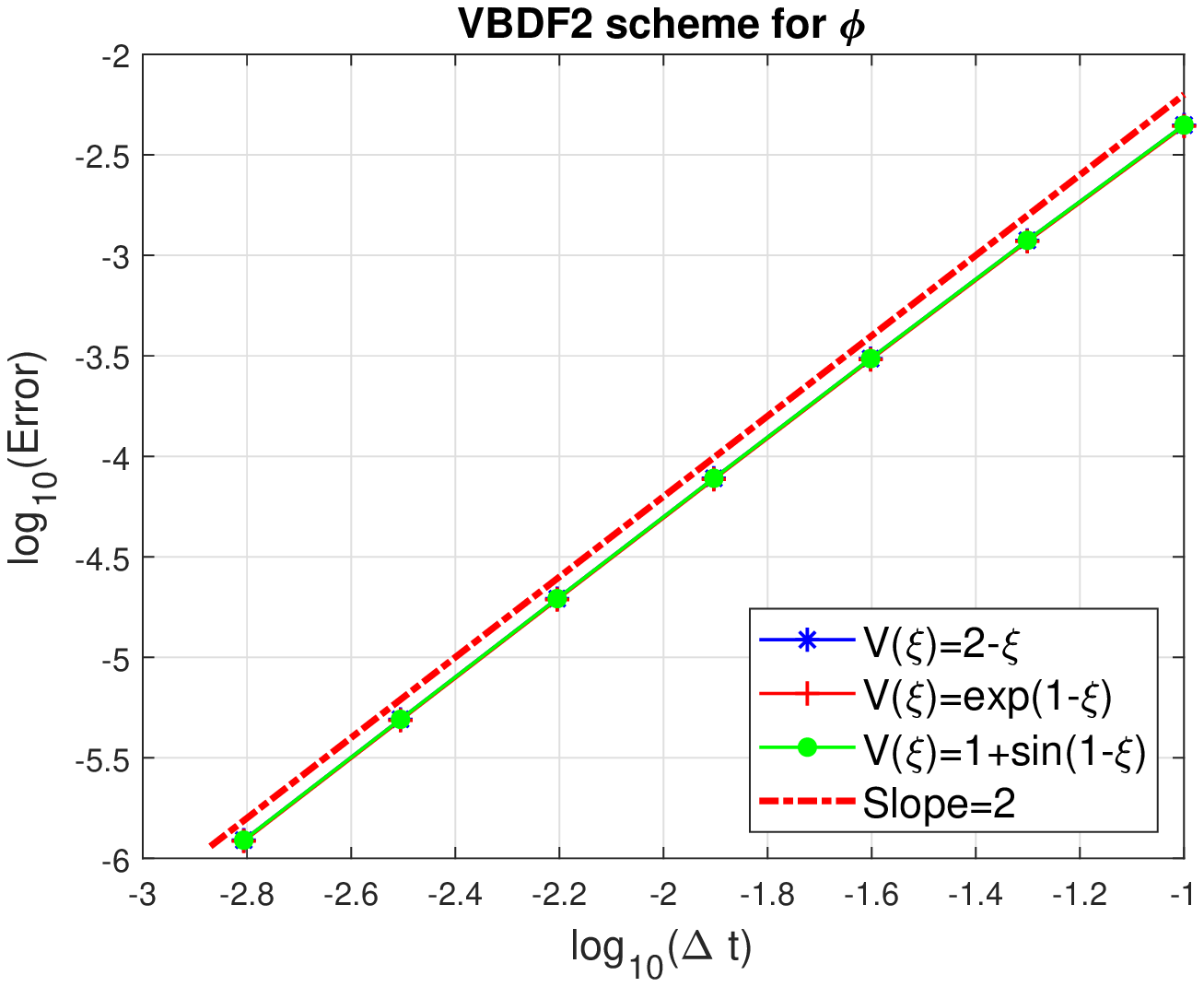}}
\centerline{\hspace{8cm}(a) $L^{2}$ gradient flow}
\end{minipage}
\begin{minipage}[t]{0.49\linewidth}
\centerline{\includegraphics[scale=0.45]{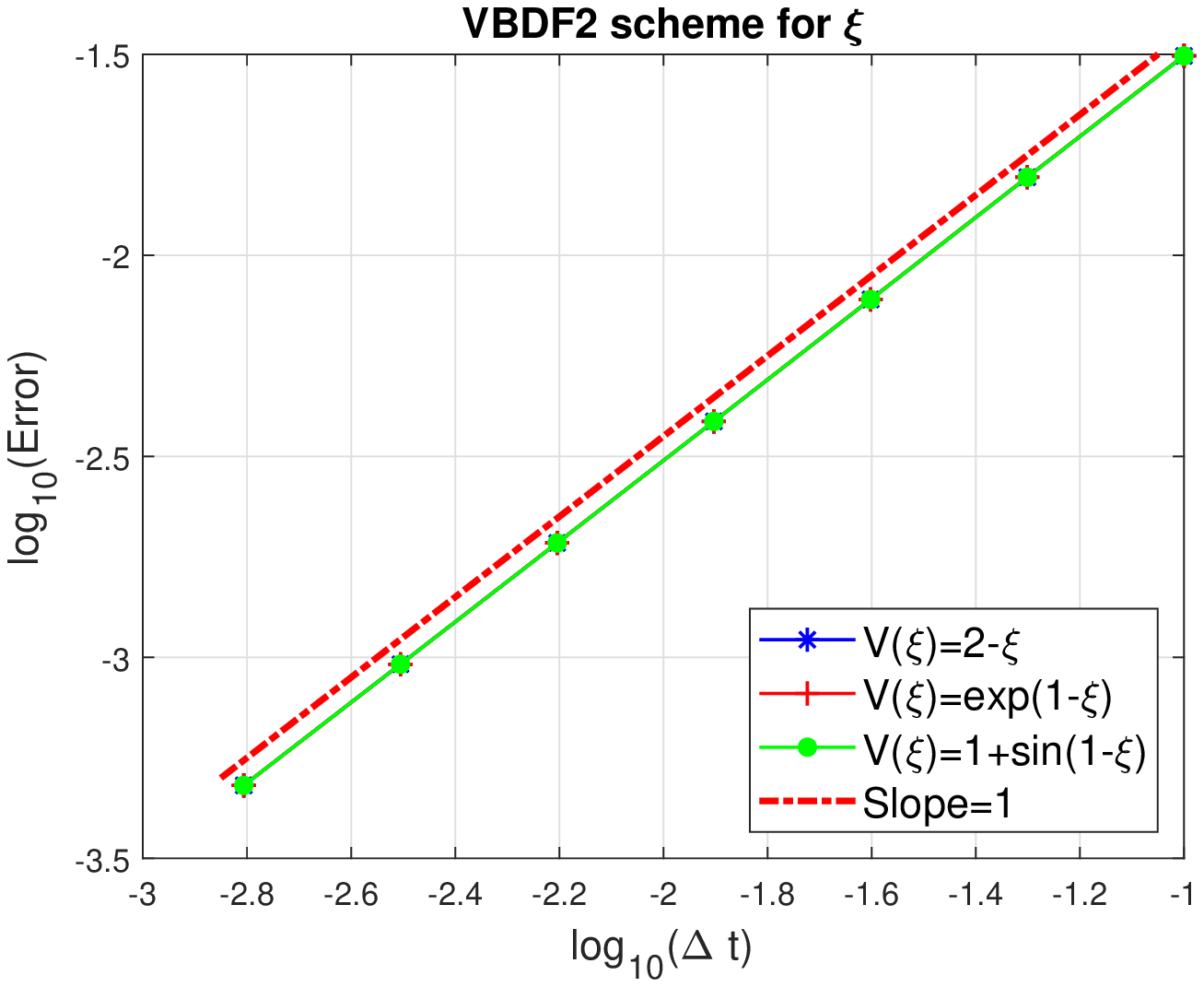}}
\centerline{}
\end{minipage}
\vskip 3mm
\begin{minipage}[t]{0.49\linewidth}
\centerline{\includegraphics[scale=0.45]{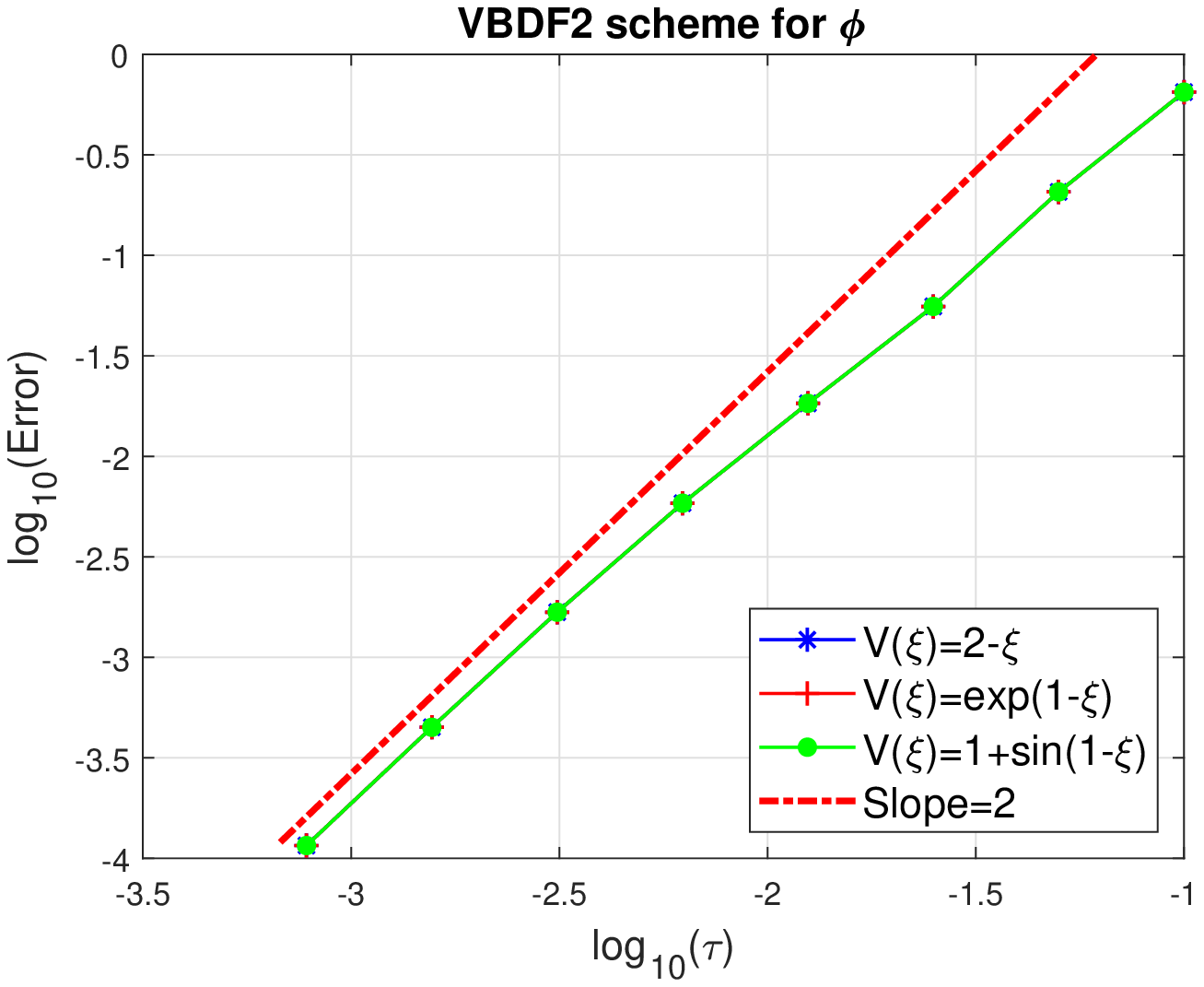}}
\centerline{\hspace{8cm}(b) $H^{-1}$ gradient flow}
\end{minipage}
\begin{minipage}[t]{0.49\linewidth}
\centerline{\includegraphics[scale=0.45]{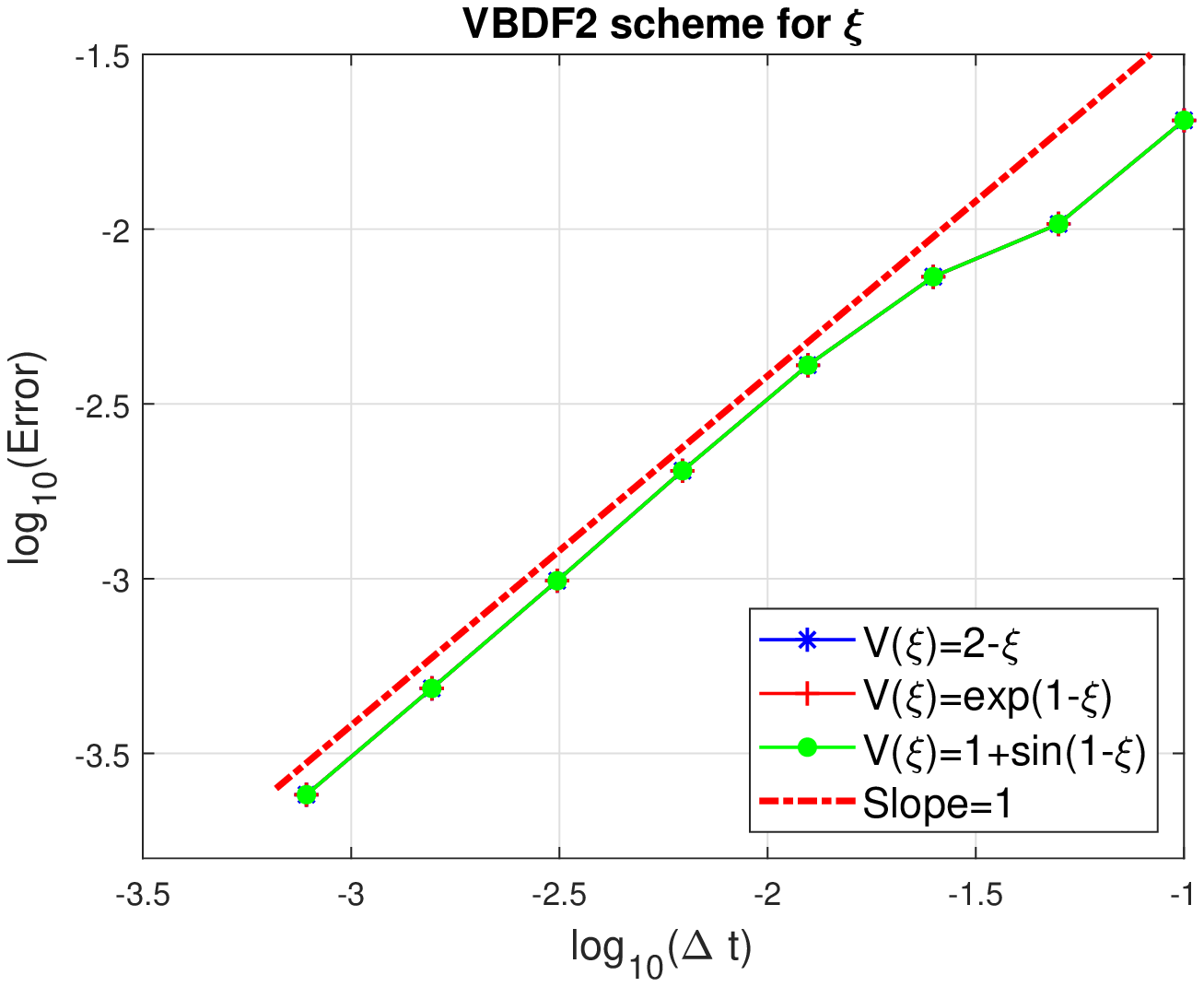}}
\centerline{}
\end{minipage}
\caption{Error decay at $T=1$ versus the time step sizes for the first order scheme and BDF-2 scheme  with $C_{0}=0$.
}\label{fig1}
\end{figure*}

\begin{table}
  \begin{center}
      \caption{ Numerical accuracy of VBDF2 scheme at T=1.}\label{tabel1}
    \begin{tabular}{|ccc|cc|cc|} \hline
    \multicolumn{3}{|c|}{temporal mesh}&\multicolumn{2}{|c|}{$L^{2}$ gradient flow}&\multicolumn{2}{|c|}{$H^{-1}$ gradient flow} \\
     $N$&$\tau$&$\max\{\gamma_{n}\}$&error&order&error&order\\ \hline
     $20$ &7.75e-02&3.14&1.23e-03&   --  &2.01e-01&   --\\
     $40$ &3.63e-02&5.67&2.68e-04&2.00&5.35e-02&1.74\\
     $80$ &2.22e-02&4.08&8.04e-05&2.46&1.86e-02&2.15\\
    $160$ &1.10e-02&4.26&1.25e-05&2.67&5.46e-03&1.76\\
    $320$ &5.38e-03&6.78&3.14e-06&1.92&1.54e-03&1.77\\
    $640$ &2.75e-03&5.63&9.53e-07&1.78&4.47e-04&1.85\\
   $1280$ &1.39e-03&6.47&2.45e-07&1.99&1.17e-04&1.96\\
   $2560$ &6.96e-04&7.72&5.47e-08&2.16&2.79e-05&2.06\\
      \hline
    \end{tabular}
  \end{center}
\end{table}
\subsection{Coarsening dynamics}

Finally we consider an application of the proposed approach with adaptive time-stepping strategy to investigate the coarsening process governed by the Cahn-Hilliard equation \eqref{ex2}, subject to the periodic boundary condition with the computational domain $\Omega:=(0,2\pi)^{2}$. Here, we set $\varepsilon^{2}=0.01$ and the initial condition $\phi(\x,0)$ to be a random data from $-0.05$ to $0.05$.

It is known that the process of the spinodal decomposition usually requires a long time simulation,
thus it is desired to apply an adaptive time stepping strategy for the proposed nonuniform BDF-2 scheme \eqref{BDF_2} to improve the computational efficiency.
This will be particularly useful and effective for the case that the free energy varies little in some time intervals and changes fast in some other intervals.
As the proposed numerical scheme \eqref{BDF_2} are unconditional stable on the general temporal meshes with $\gamma_{**}\leq 4.8645.$ Thus it is easy to employ the adaptive time stepping strategy in our BDF-2 scheme \eqref{BDF_2}, in which the small and the large time step sizes can be applied according to the varying rate of the free energy or the solutions.
Actually, there already exist some adaptive strategies employed in the unconditionally stable schemes
for the gradient flows. Here, we will adopt the following robust strategy based on the energy
variation \cite{QZT11}:
\bq\label{adp}
\dps\Dt_{n+1}=\min\Big(\max\big(\Dt_{min},\frac{\Dt_{max}}{\sqrt{1+\gamma|E^{'}(t)|^{2}}}\big),4.8645\cdot\Dt_{n}\Big),
\eq
where $\Dt_{min},\Dt_{max}$ are predetermined minimum and maximum time step sizes,
$\gamma$ is a constant to be determined.
Obviously according to this strategy, the scheme will automatically select small time step sizes when the energy variation is big and large time step sizes when the change of energy is small, seeing Figure \ref{fig2_2}. Next, we will investigate the efficiency of the BDF-2 scheme \eqref{BDF_2} with this kind of adaptive strategy.

The simulation is performed by using BDF-2 scheme \eqref{BDF_2} in time and Fourier spectral method in space with $128\times128$ Fourier modes.
In Figure \ref{fig2_1}, it displays a comparison on the solution snapshot evolution
between uniform large time step sizes,
adaptive stepping, and fixed small time step sizes up to $T=20$. It is observed that
there is no distinguishable difference at early time (at about $t=0.1$)
as the energy varies slowly at this stage.
After the energy undergoing large variation, the large time step size $\Dt=10^{-2}$ yields
inaccurate $\phi$, while
the adaptive time strategy gives the correct coarsening pattern which is consistent
with the results by the small time step case $\Dt=10^{-4}$.
In Figure \ref{fig2_2} (a), we plot the evolutions of the free energy in time with three different types of temporal meshes.
It has shown the energy dissipation of the proposed VBDF2 scheme consists very well with the one using the fixed small uniform temporal mesh.
The evolution of the adaptive time step sizes in time are displayed in Figure \ref{fig2_2},
which indicates the efficiency of the proposed VBDF2 scheme combining with the adaptive strategy \eqref{adp}.

\begin{figure*}[htbp]
\begin{minipage}[t]{0.19\linewidth}
\centerline{\includegraphics[scale=0.22]{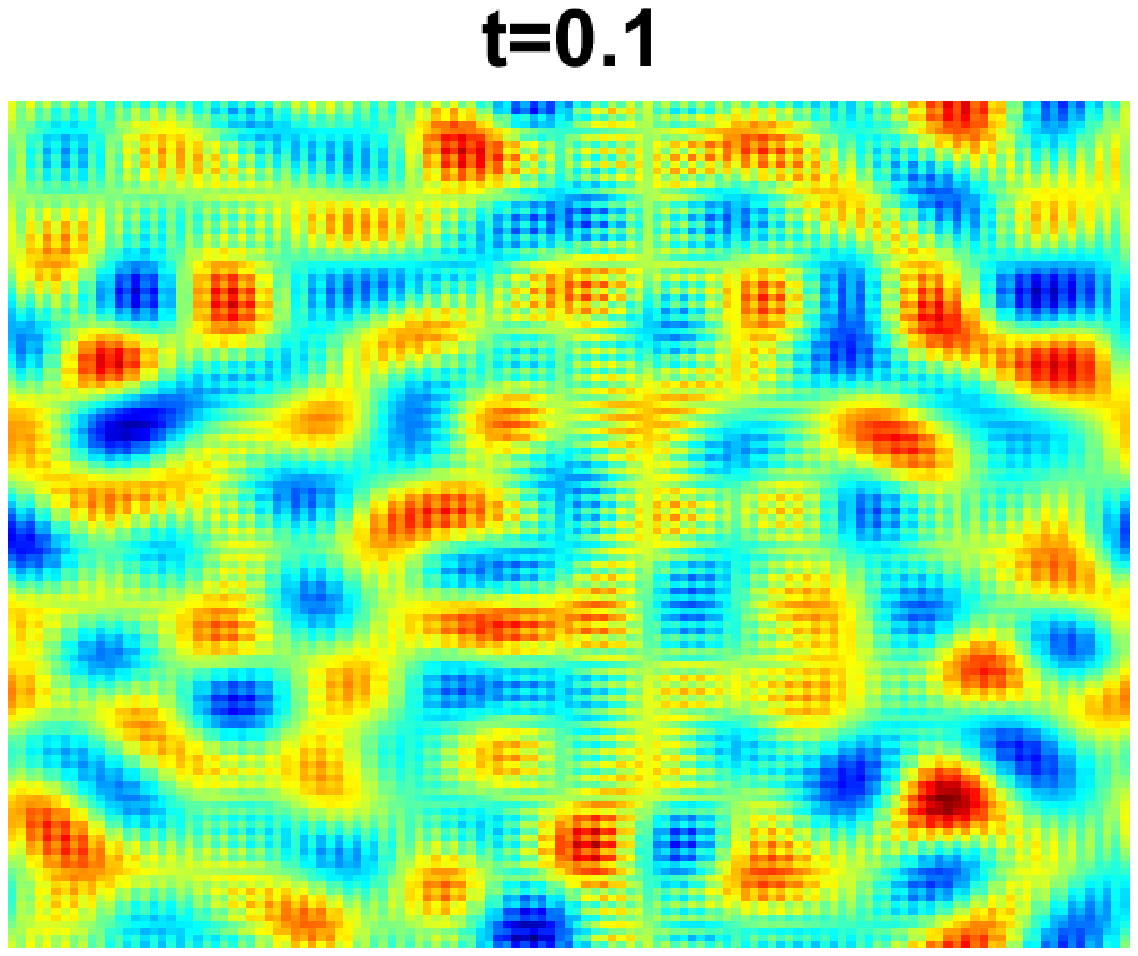}}
\centerline{}
\end{minipage}
\begin{minipage}[t]{0.19\linewidth}
\centerline{\includegraphics[scale=0.22]{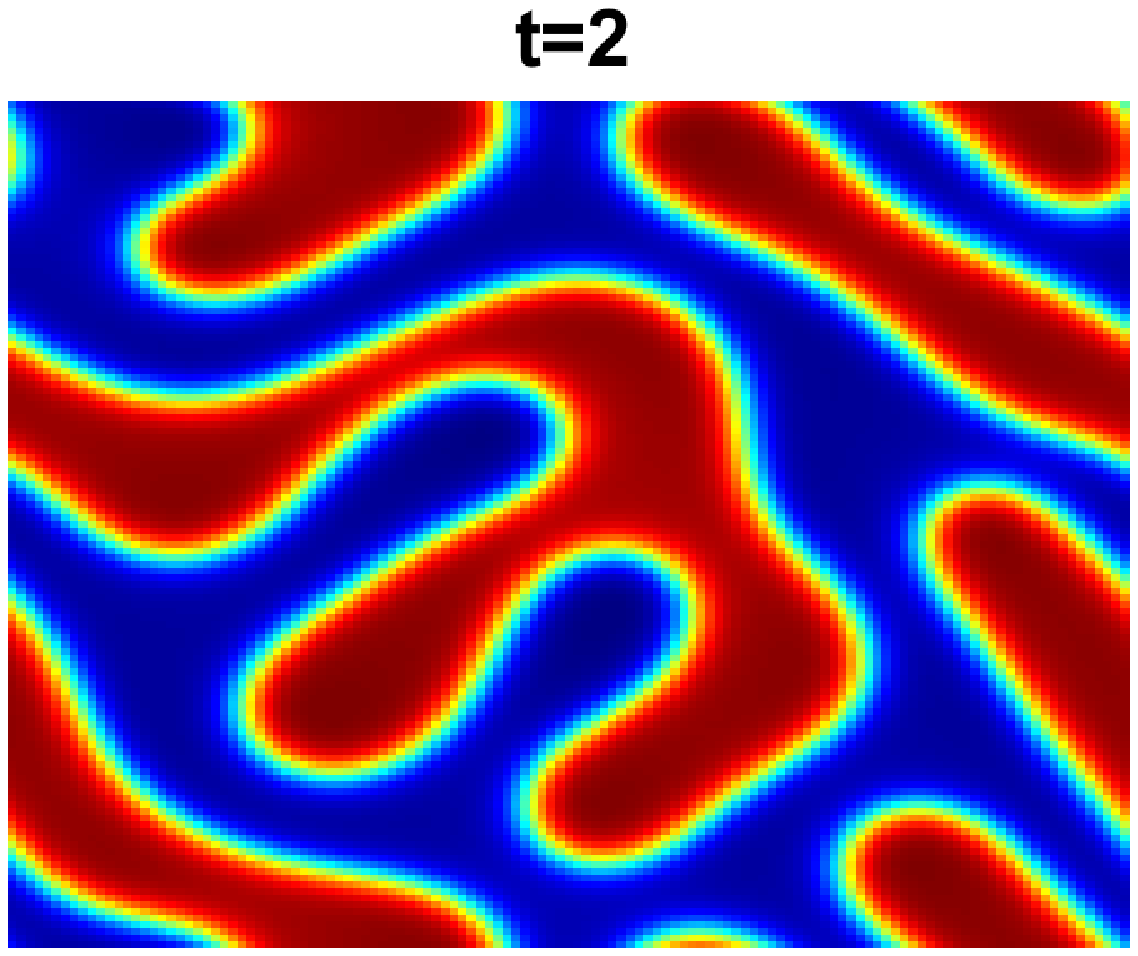}}
\centerline{}
\end{minipage}
\begin{minipage}[t]{0.19\linewidth}
\centerline{\includegraphics[scale=0.22]{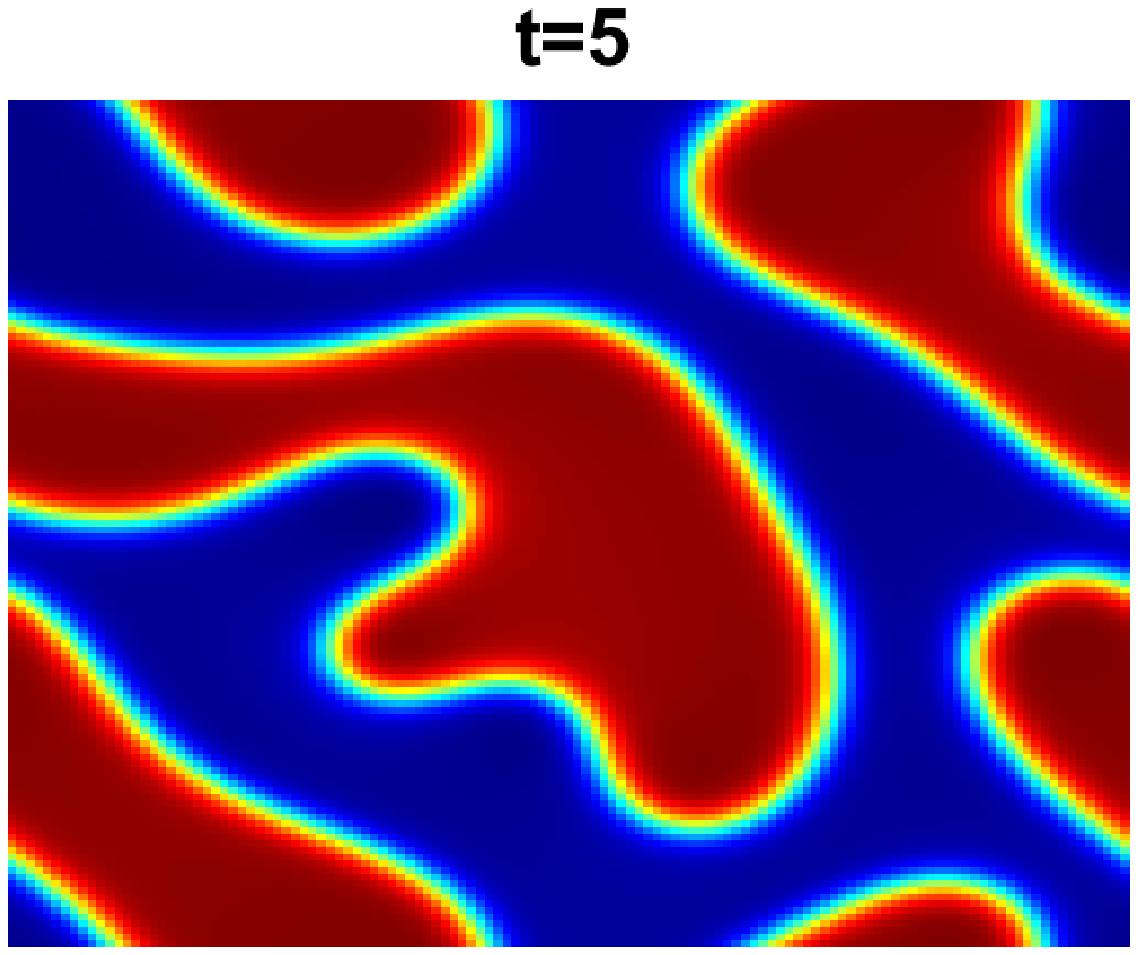}}
\centerline{(a) fixed time step size $\tau=0.01$}
\end{minipage}
\begin{minipage}[t]{0.19\linewidth}
\centerline{\includegraphics[scale=0.22]{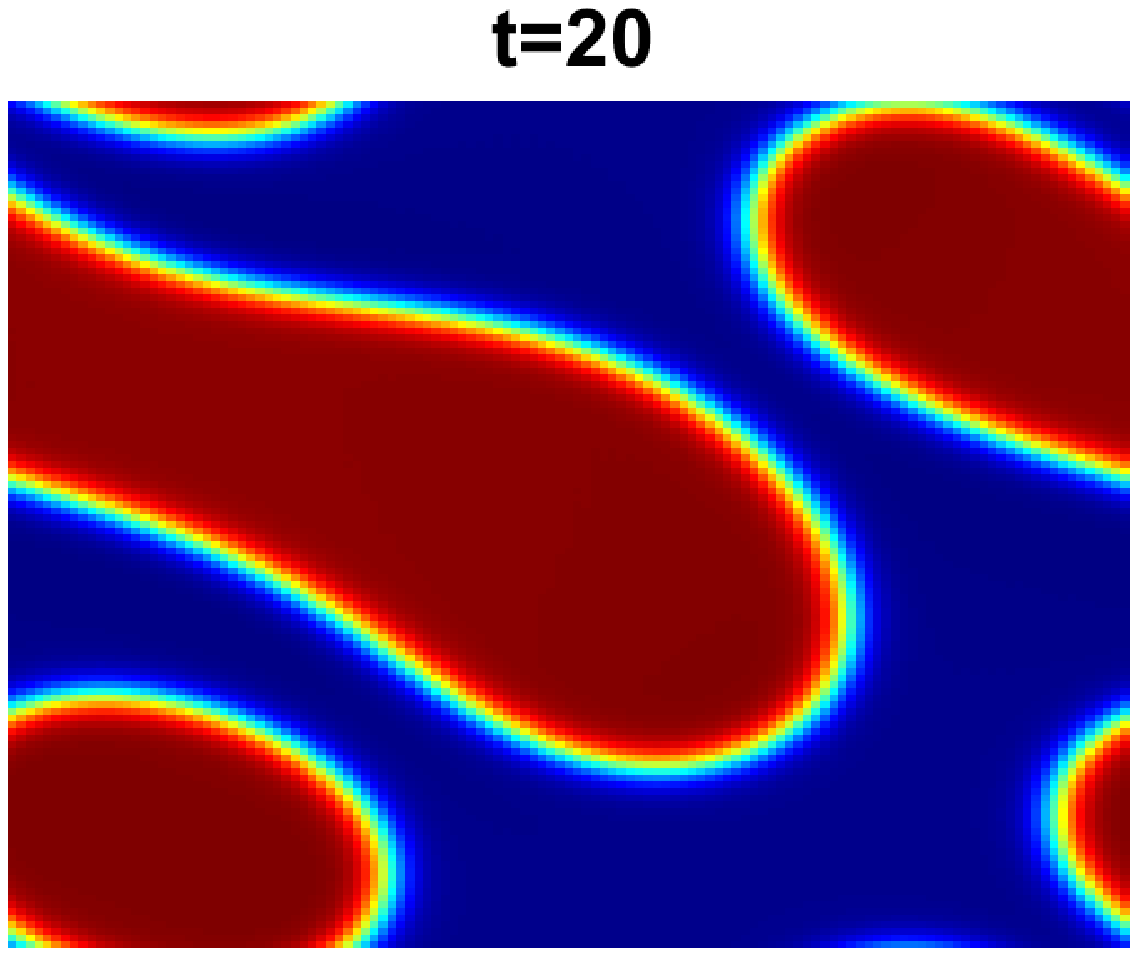}}
\centerline{}
\end{minipage}
\begin{minipage}[t]{0.19\linewidth}
\centerline{\includegraphics[scale=0.22]{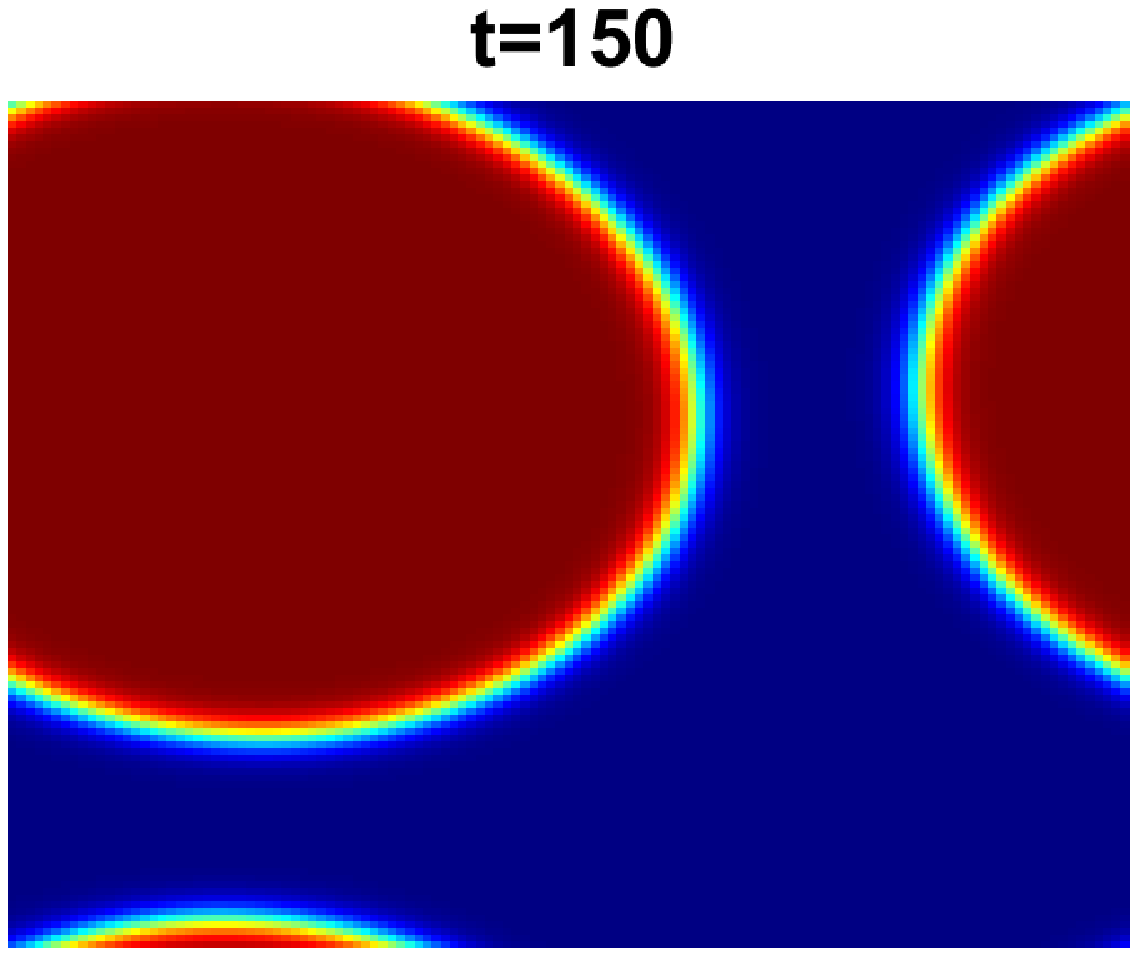}}
\centerline{}
\end{minipage}
\vskip 1mm
\begin{minipage}[t]{0.19\linewidth}
\centerline{\includegraphics[scale=0.22]{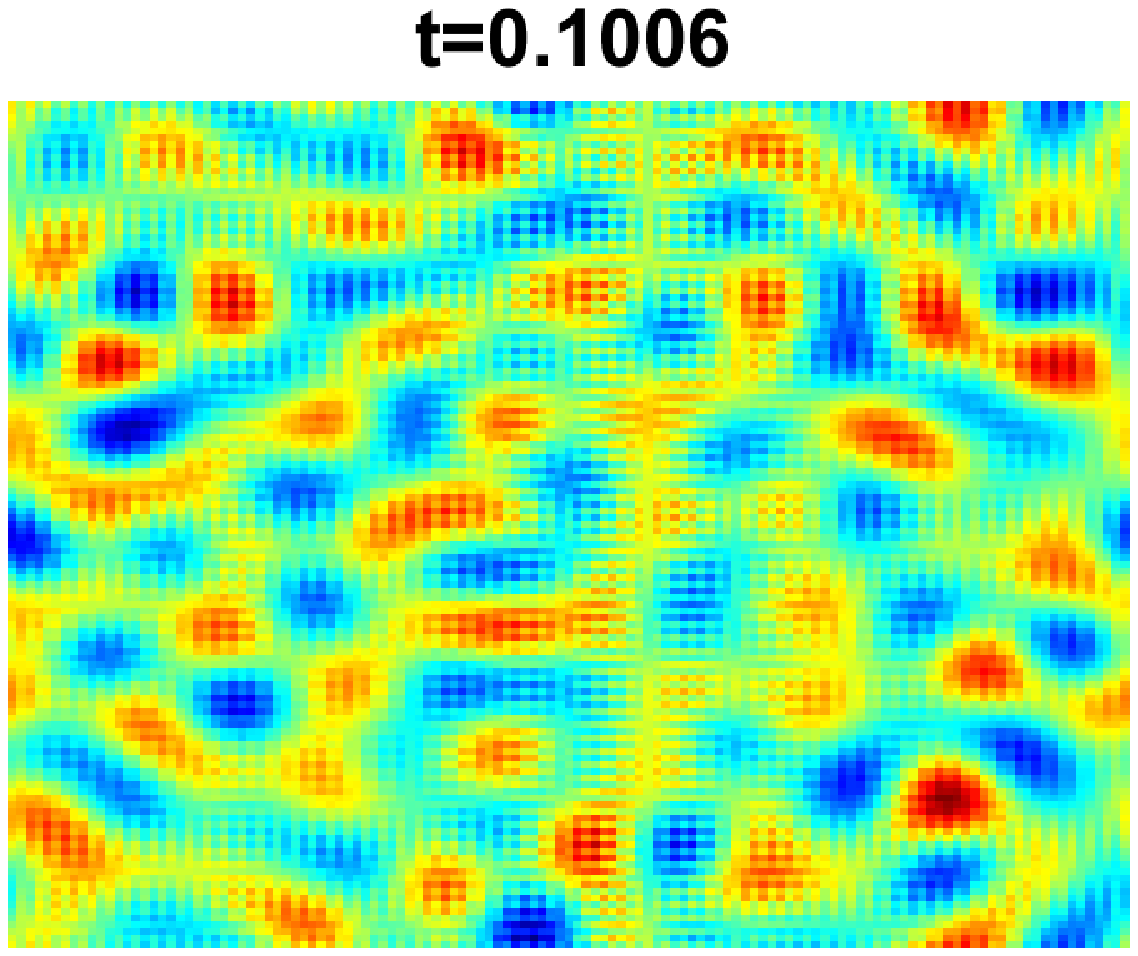}}
\centerline{}
\end{minipage}
\begin{minipage}[t]{0.19\linewidth}
\centerline{\includegraphics[scale=0.22]{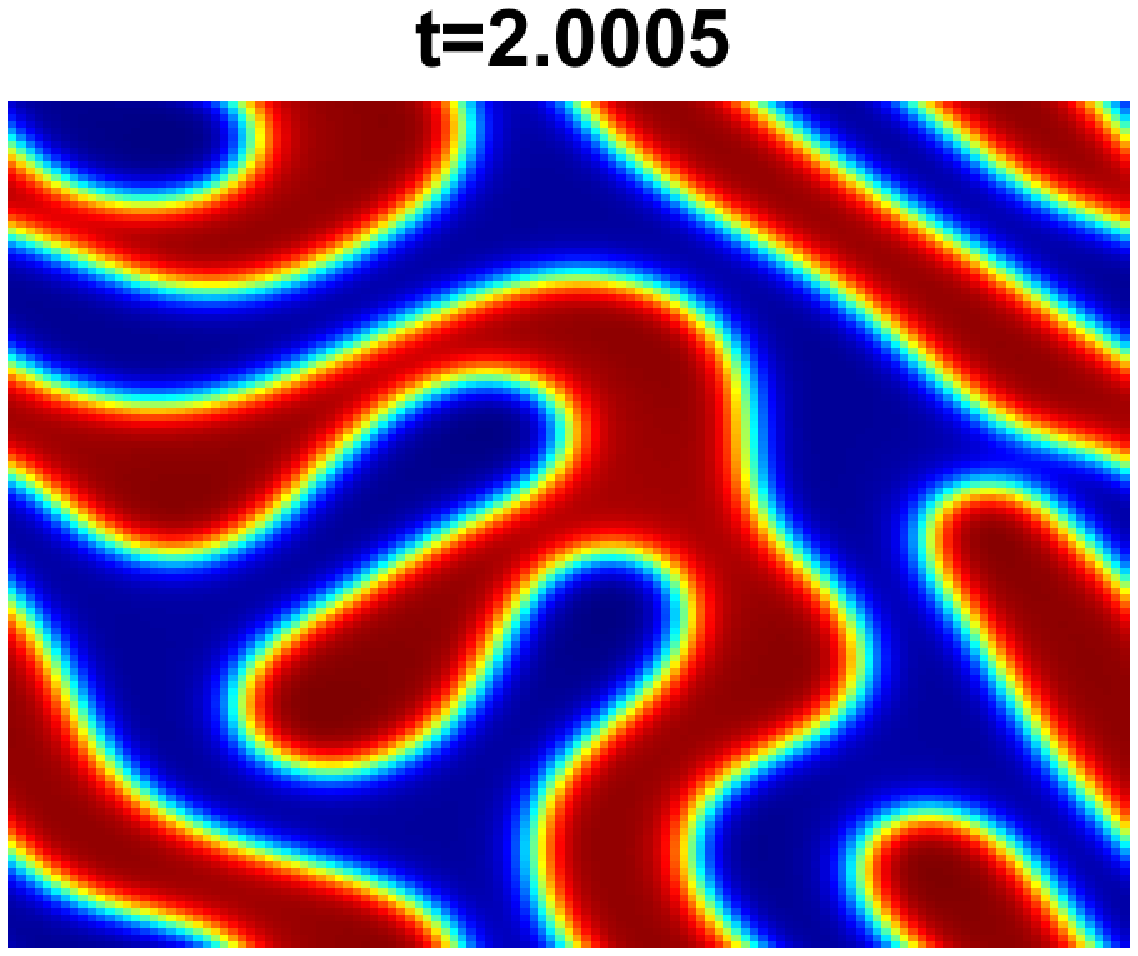}}
\centerline{}
\end{minipage}
\begin{minipage}[t]{0.19\linewidth}
\centerline{\includegraphics[scale=0.22]{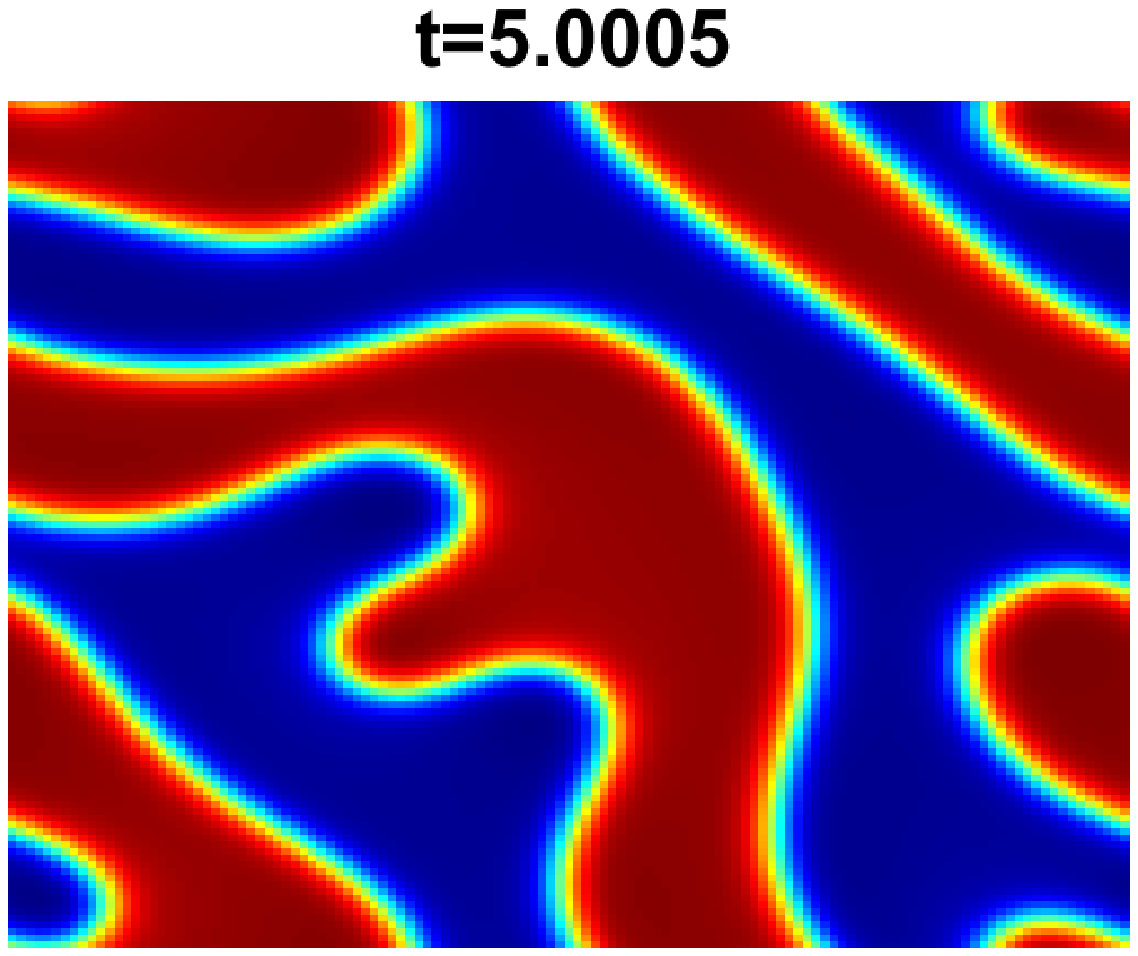}}
\centerline{(b) Adaptive time steps  with $\tau_{min}=10^{-4}, \tau_{max}=10^{-2}$ and $\gamma=1000.$}
\end{minipage}
\begin{minipage}[t]{0.19\linewidth}
\centerline{\includegraphics[scale=0.22]{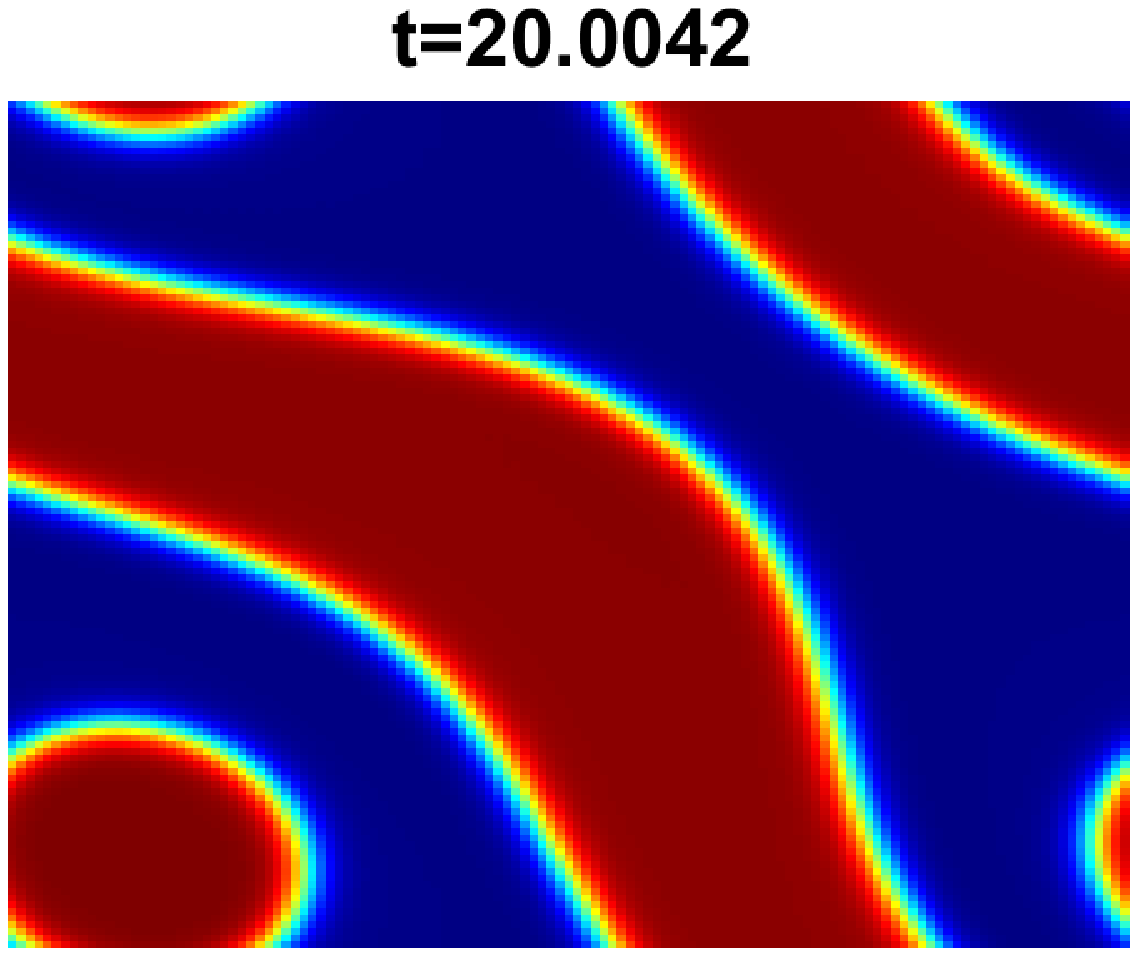}}
\centerline{}
\end{minipage}
\begin{minipage}[t]{0.19\linewidth}
\centerline{\includegraphics[scale=0.22]{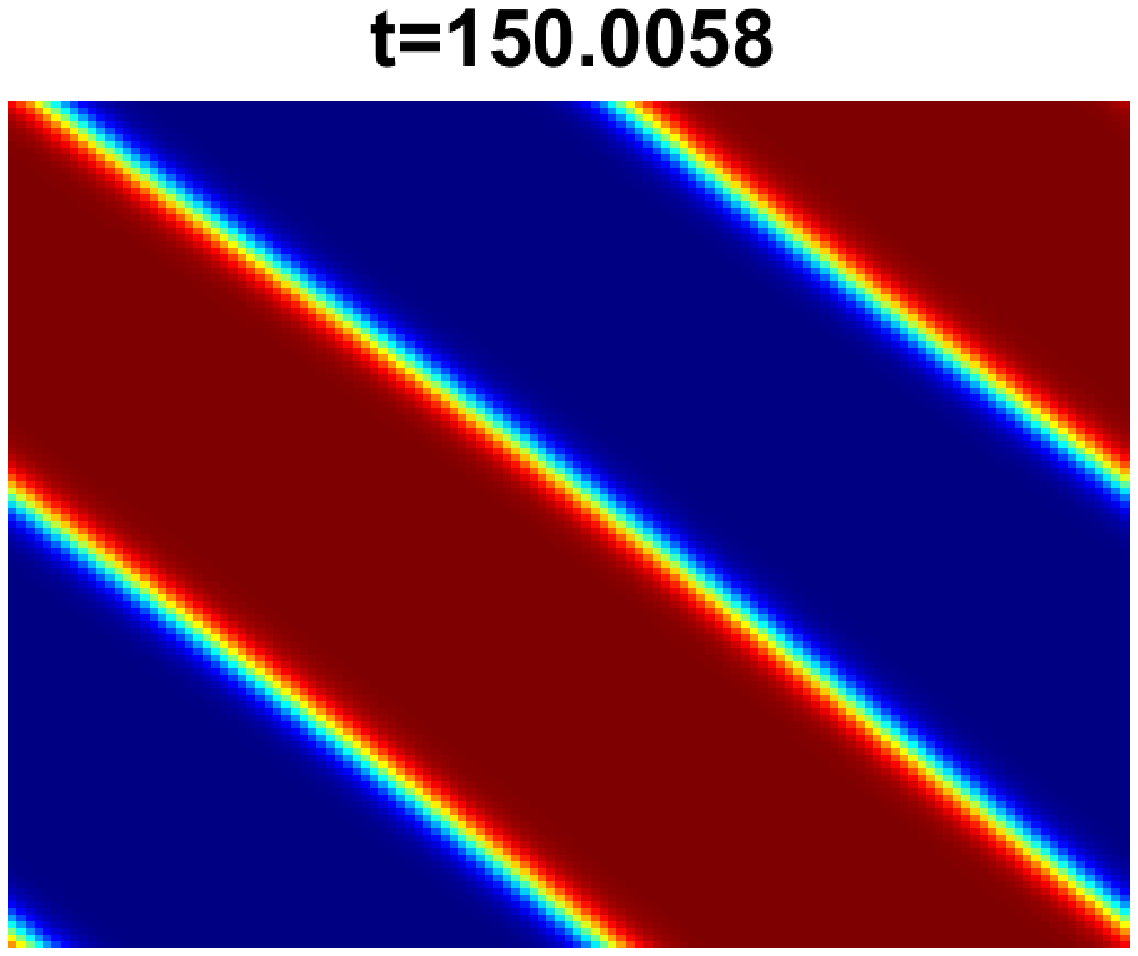}}
\centerline{}
\end{minipage}
\vskip 1mm
\begin{minipage}[t]{0.19\linewidth}
\centerline{\includegraphics[scale=0.22]{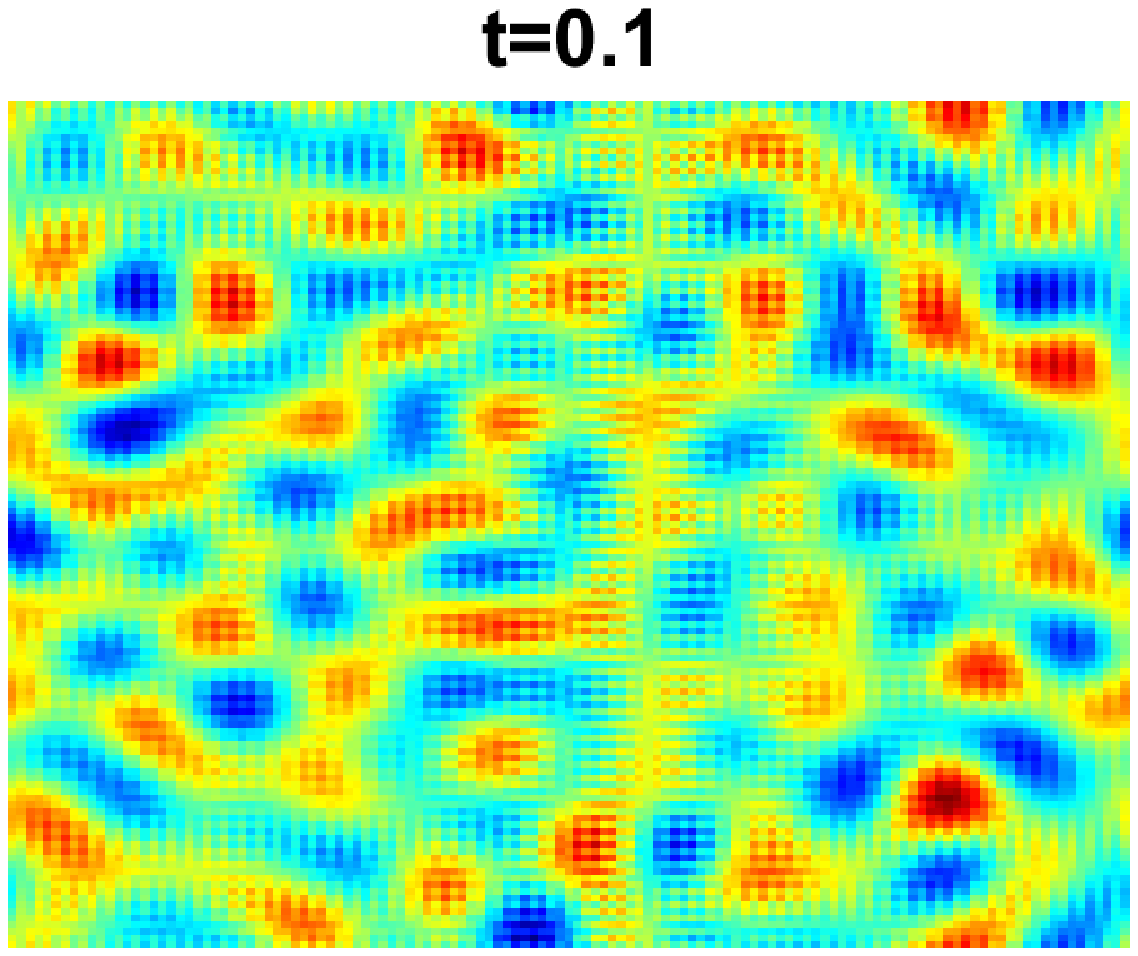}}
\centerline{}
\end{minipage}
\begin{minipage}[t]{0.19\linewidth}
\centerline{\includegraphics[scale=0.22]{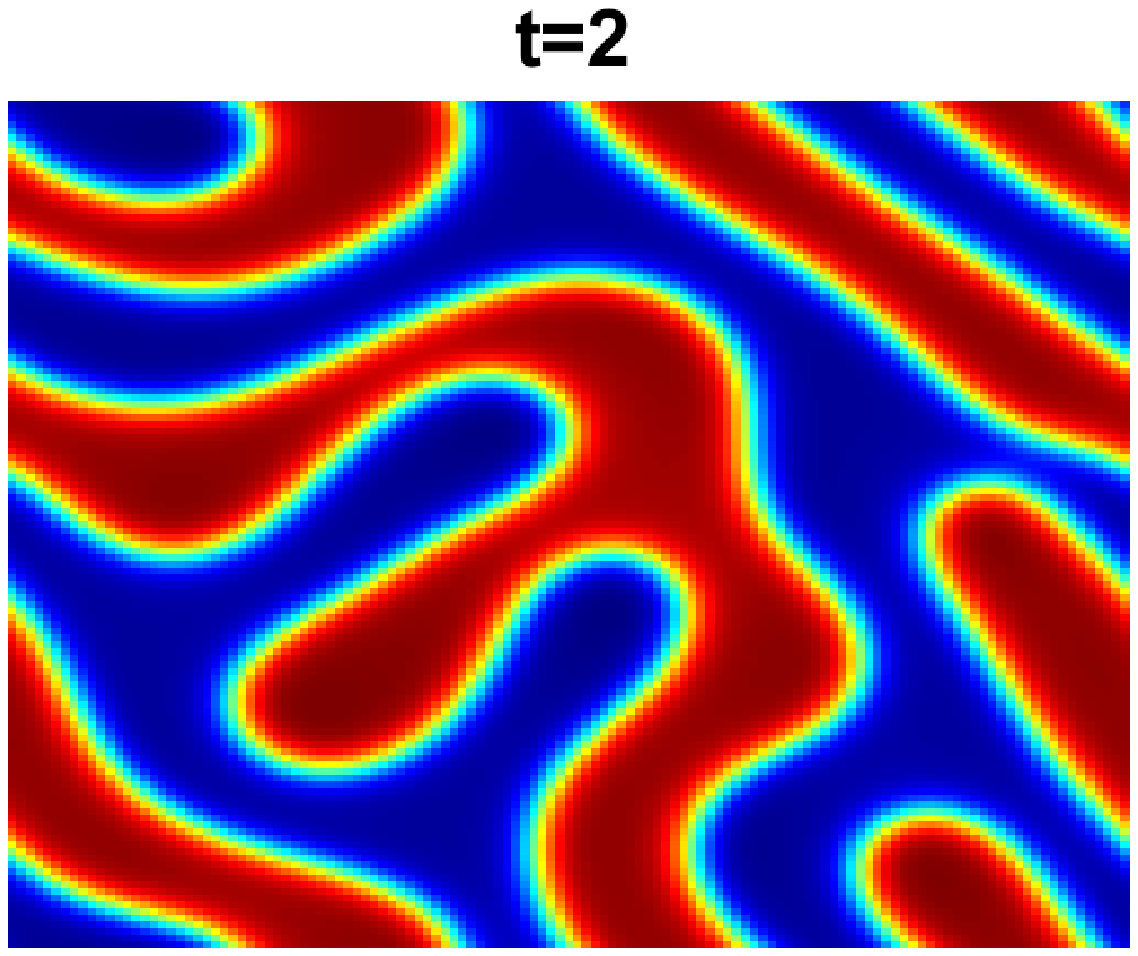}}
\centerline{}
\end{minipage}
\begin{minipage}[t]{0.19\linewidth}
\centerline{\includegraphics[scale=0.22]{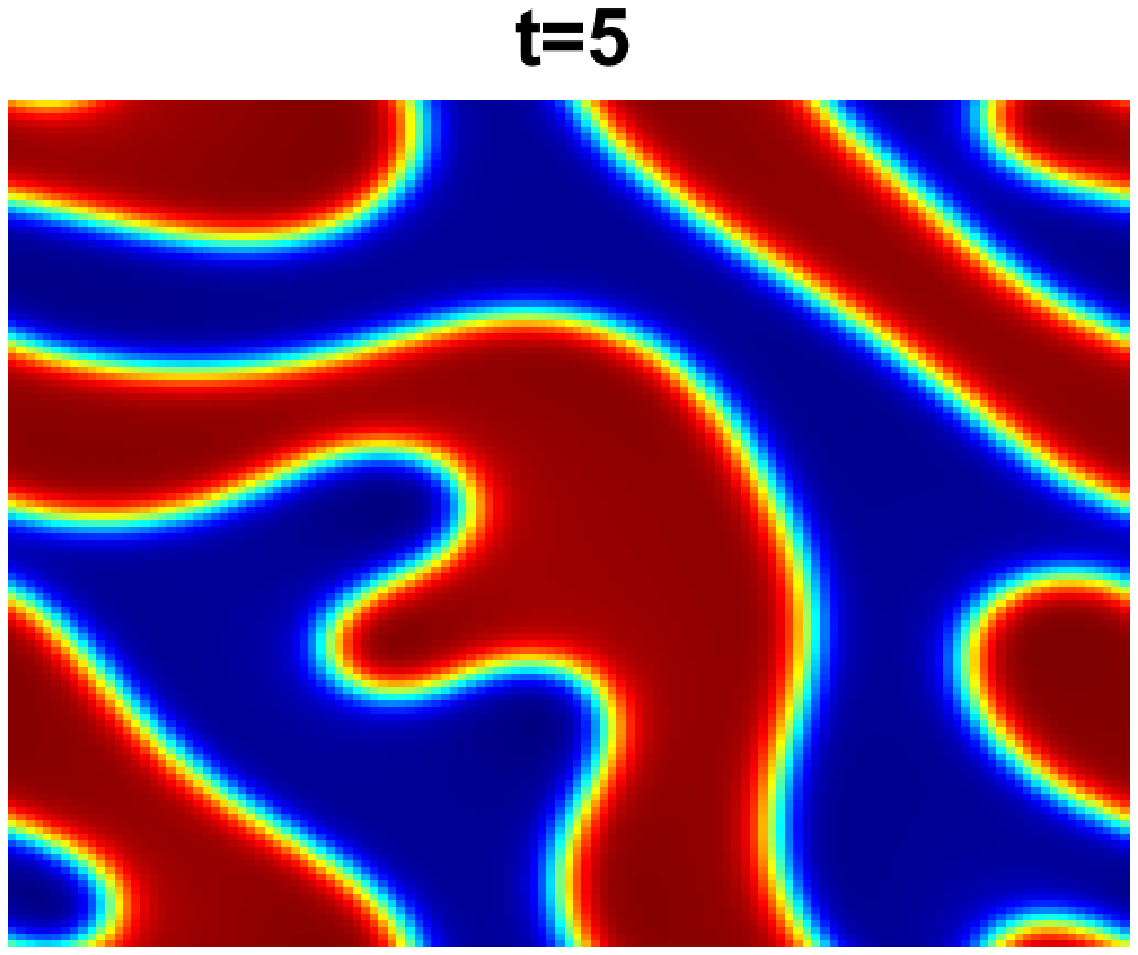}}
\centerline{(c) fixed time step size $\tau=10^{-4}$}
\end{minipage}
\begin{minipage}[t]{0.19\linewidth}
\centerline{\includegraphics[scale=0.22]{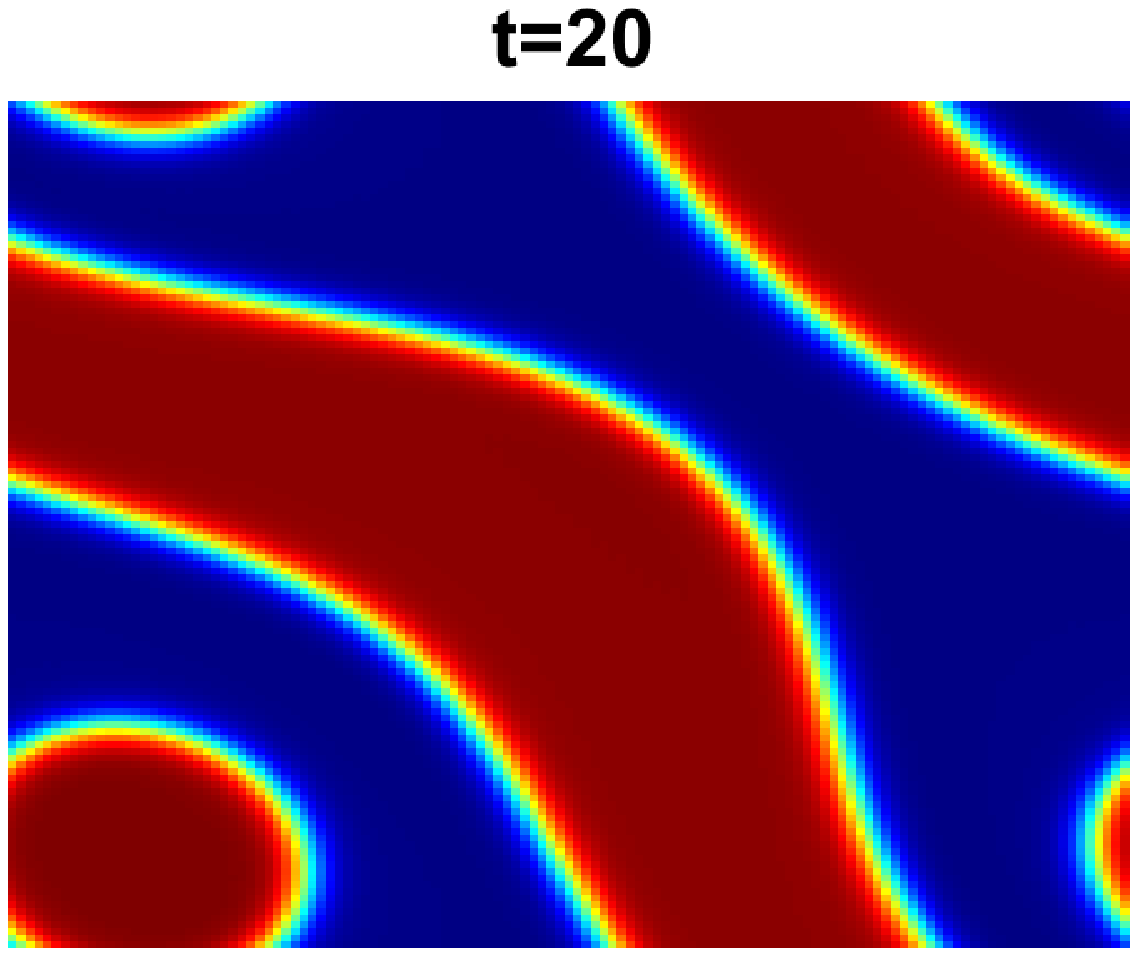}}
\centerline{}
\end{minipage}
\begin{minipage}[t]{0.19\linewidth}
\centerline{\includegraphics[scale=0.22]{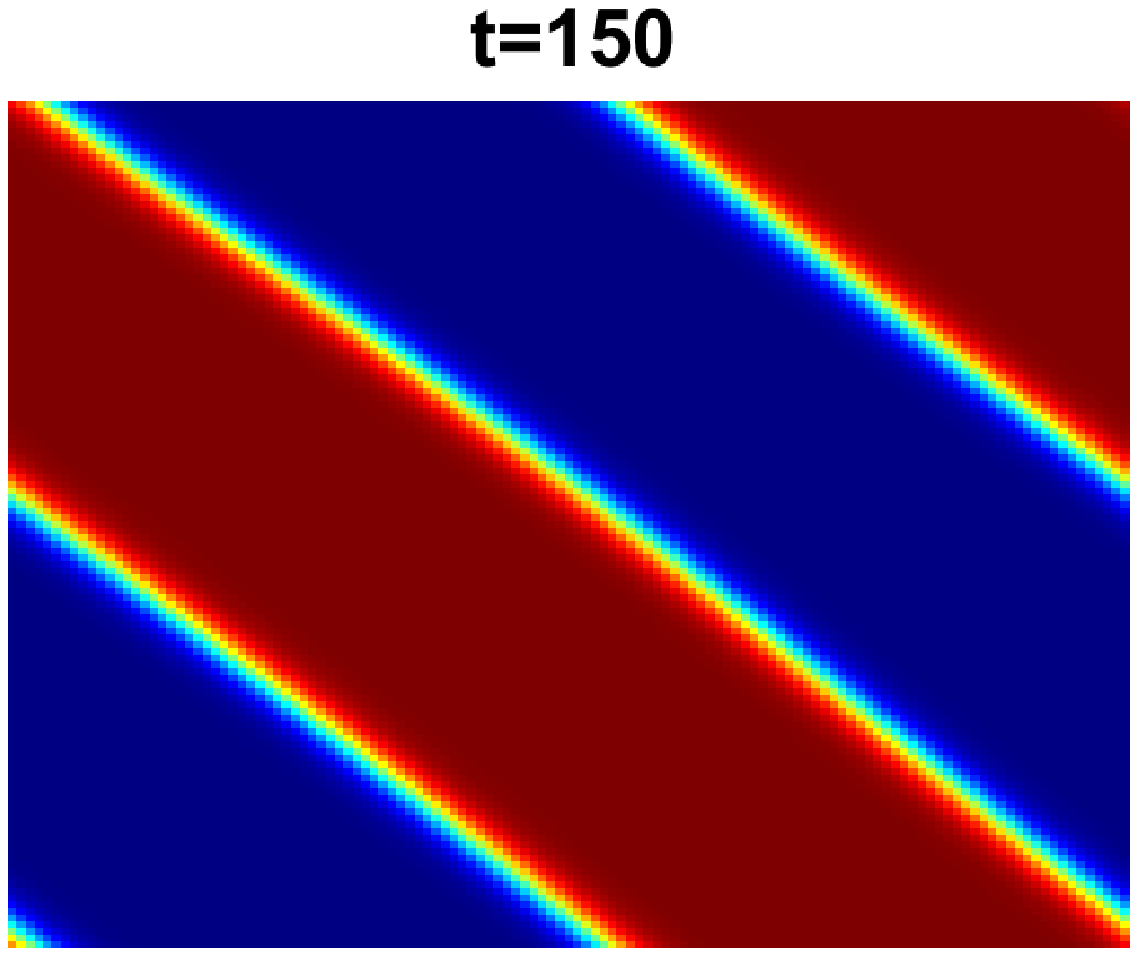}}
\centerline{}
\end{minipage}
\caption{Solution snapshots of coarsening dynamics for the Cahn-Hilliard equation at $t=0.1, 2, 5, 20, 150$, reseptively.
}\label{fig2_1}
\end{figure*}

\begin{figure*}[htbp]
\begin{minipage}[t]{0.49\linewidth}
\centerline{\includegraphics[scale=0.45]{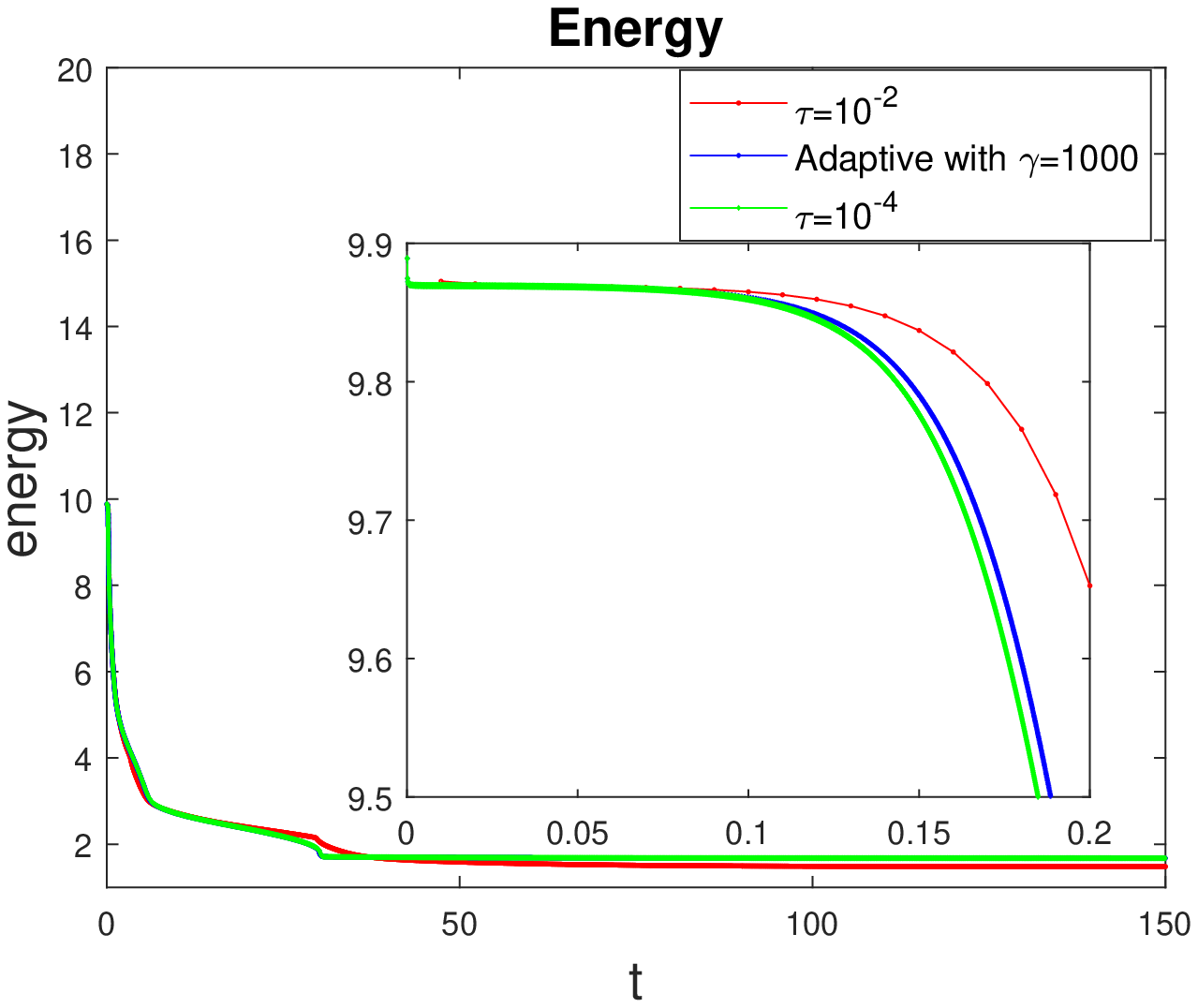}}
\centerline{(a) energy evolution in time}
\end{minipage}
\begin{minipage}[t]{0.49\linewidth}
\centerline{\includegraphics[scale=0.45]{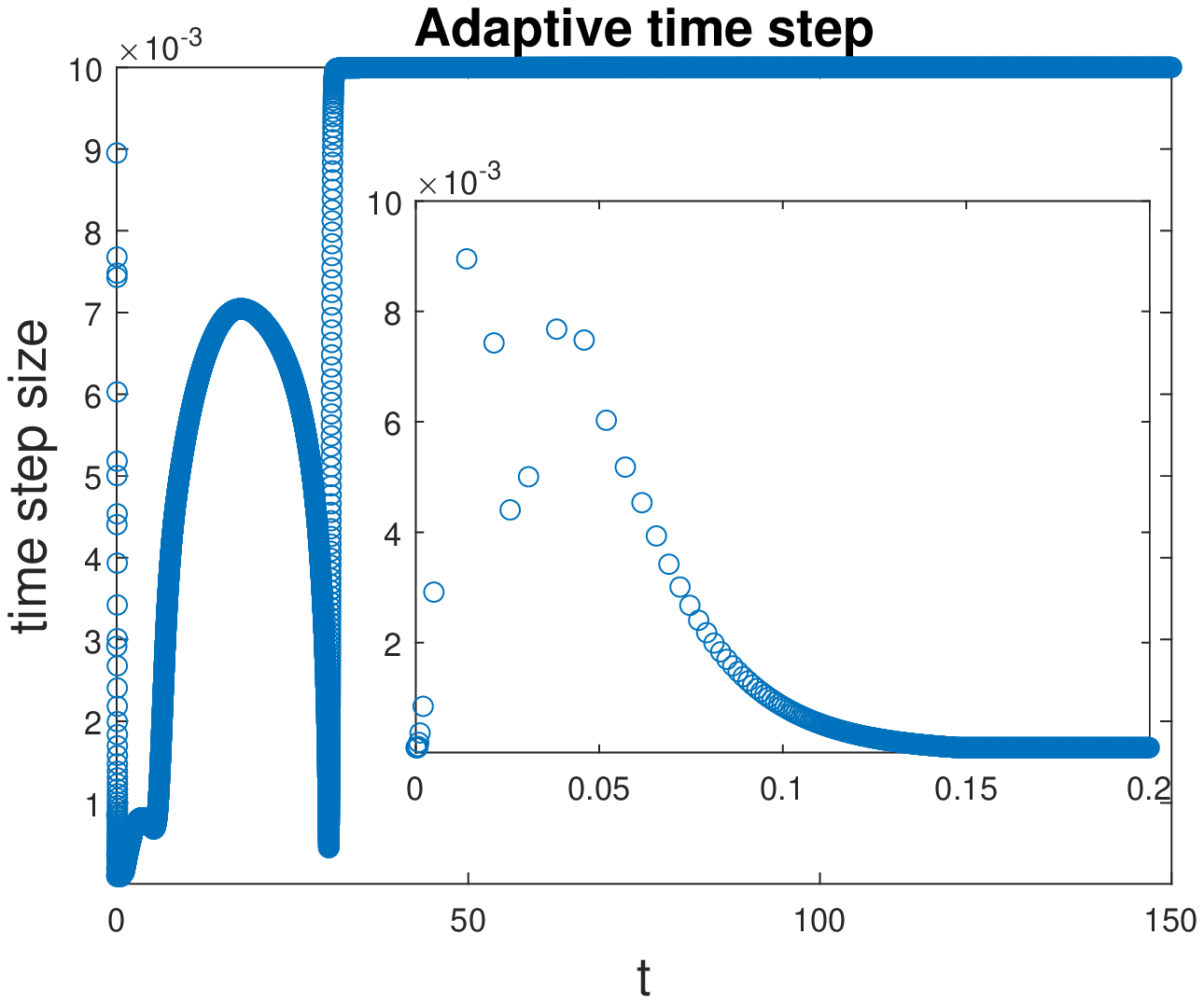}}
\centerline{(b) adaptive time step sizes with $\gamma=1000$}
\end{minipage}
\caption{Evolution in time of the energy and the adaptive time step sizes.
}\label{fig2_2}
\end{figure*}

\section {Concluding remarks}\label{sec:sect6}
We have proposed a variant of the scalar auxiliary variable approach for gradient flows with a technique achieving first order approximation to the auxiliary variable without affecting the second order accuracy of the unknown phase function $\phi$.
Starting with this new approach, we have constructed a second order BDF scheme for gradient flows on the nonuniform temporal mesh, in which the existing adaptive time stepping strategies can be easily adopted.
Moreover, the associated stability and error estimate of the proposed VBDF2 scheme have been rigorously established with a mild assumption on the regularity of the solution and the requirement on the adjacent time step ratios $\gamma_{n+1}\leq4.8645.$
Finally, a series of  numerical experiments have been carried out to validate the theoretical claims.

%\appendix
%\section{An example appendix}
%\lipsum[71]
%
%\begin{lemma}
%Test Lemma.
%\end{lemma}

%\Section*{Acknowledgments}
%We Would Like To Acknowledge The Assistance Of Volunteers In Putting
%Together This Example Manuscript And Supplement.

\bibliographystyle{siamplain}
\bibliography{references}
\end{document}